\documentclass[12pt]{article}
\usepackage[active]{srcltx}
\usepackage{amsmath,amssymb}
\usepackage{amsthm}
\usepackage{latexsym}
\usepackage{color}
\usepackage{dsfont}
\usepackage[normalem]{ulem}
\usepackage{epsfig}
\usepackage{verbatim}
\usepackage{enumerate}
\usepackage{float}
\usepackage{url}
\usepackage{authblk}

\usepackage{epstopdf}


\counterwithin*{equation}{section}
\setlength{\topmargin}{0in}
\setlength{\textwidth}{6.5in}
\setlength{\textheight}{8.4in}
\setlength{\oddsidemargin}{+0.1cm}
\setlength{\evensidemargin}{+0.1cm}
\newtheorem{teo}{Theorem}[section]
\newtheorem{Theorem}{Theorem}[section]
\newtheorem{lema}[teo]{Lemma}

\newtheorem{defi}[teo]{Definition}
\newtheorem{cor}[teo]{Corollary}
\newtheorem{rem}[teo]{Remark}



\begin{document}

\title{\bf Quenching phenomena in a system of non-local diffusion equations}

\author[1,2]{Jos\'e M. Arrieta Algarra\thanks{Email: \texttt{arrieta@mat.ucm.es}}}
\author[1]{Ra\'ul Ferreira\thanks{Email: \texttt{raul\_ferreira@mat.ucm.es}}}
\author[1]{Sergio Junquera\thanks{Corresponding author. Email: \texttt{sejunque@ucm.es}}}

\affil[1]{Departamento de An\'alisis Matem\'atico y Matem\'{a}tica Aplicada, Universidad Complutense de Madrid, 28040 Madrid, Spain.}
\affil[2]{Instituto de Ciencias Matem\'aticas CSIC-UAM-UC3M-UCM, C/ Nicol\'as Cabrera 13-15, Cantoblanco, 28049 Madrid, Spain.}

\date{}

\maketitle

\begin{abstract}
In this paper we study the quenching phenomena occurring in a non-local diffusion system of two equations with intertwined singular absorption terms of the type $u^{-p}$. We prove that there exists a range of multiplicative parameters for which every solution presents quenching, while outside this range there are both global and quenching solutions. We also characterize in terms of the exponents of the absorption terms when the quenching is simultaneous or non-simultaneous and obtain the quenching rates.

\bigskip
\noindent \textit{Keywords} \textemdash \, Non-local diffusion, Quenching, Stationary solutions, Simultaneous and non-simultaneous.
\end{abstract}

\begin{section}{Introduction}

In this paper we study the solutions and behaviour of the system
    \begin{equation} \label{1.1}
    \left\{
    \begin{array}{l}
        \displaystyle u_t(x,t)=\int_\Omega J(x-y)u(y,t)\,dy
        +\int_{\mathbb{R}^N\setminus\Omega} J(x-y)\,dy -u(x,t)-\lambda v^{-p}(x,t), \\
        \displaystyle v_t(x,t)=\int_\Omega J(x-y)v(y,t)\,dy
        +\int_{\mathbb{R}^N\setminus\Omega} J(x-y)\,dy-v(x,t)-\mu u^{-q}(x,t),  \\
        u(x,0)=u_0(x)>0; \; v(x,0)=v_0(x)>0,
    \end{array}
    \right.
    \end{equation}
with $x\in\overline{\Omega}$ and $t\in [0,T)$. We consider that $T\in (0,\infty]$ is the maximal existence time of the solution, $\Omega \subset \mathbb{R}^N$ is an open bounded connected smooth domain, the initial data $u_0$ and $v_0$ are positive continuous functions in $\mathcal{C}(\overline{\Omega})$ and the parameters $\lambda,\mu, p, q >0$. The kernel $J:\mathbb{R}^N\to
\mathbb{R}$ is a non-negative $C^1$ function, radially symmetric, decreasing and with $\int_{\mathbb{R}^N} J(s)\,ds=1$.

To simplify the notation, given a solution of \eqref{1.1}, we can consider the following extensions to $\mathbb{R}^N$:
\begin{equation*}
\tilde{u}(x,t)=\left\{
    \begin{array}{ll}
        u(x,t), & x \in{\overline{\Omega}} \\
        1, & x \in \mathbb{R}^N \backslash \overline{\Omega}.
    \end{array}
    \right., \;\;
    \tilde{v}(x,t)=\left\{
    \begin{array}{ll}
        v(x,t), & x \in{\overline{\Omega}} \\
        1, & x \in \mathbb{R}^N \backslash \overline{\Omega}.
    \end{array}
    \right.
\end{equation*}

Then these new functions satisfy the following equations:
\begin{equation} \label{1.2}
    \left\{
    \begin{array}{ll}
        \displaystyle \tilde{u}_t(x,t)= J \ast \tilde{u} (x,t) - \tilde{u} (x,t)-\lambda \tilde{v} ^{-p}(x,t), &x\in{\overline{\Omega}}, \, t\in[0,T) \\
        \displaystyle \tilde{v}_t(x,t)= J \ast \tilde{v} (x,t) - \tilde{v}(x,t)-\mu \tilde{u}^{-q}(x,t), &x\in{\overline{\Omega}}, \, t\in[0,T)  \\
        \tilde{u}(x,t) = \tilde{v}(x,t) = 1, & x \in \mathbb{R}^N \backslash \overline{\Omega}, \, t\in[0,T) \\
        \tilde{u}(x,0)=u_0(x); \; \tilde{v}(x,0)=v_0(x), &x\in{\overline{\Omega}},
    \end{array}
    \right.
\end{equation}

Throughout the paper we will use these two formulations indistinctly, and with some abuse of notation we will write the $\tilde{u}$ as $u$ directly, always considering that we are extending the solution $u(\cdot,t) \in \mathcal{C} (\overline{\Omega})$ by $1$ in $\mathbb{R}^N \backslash \overline{\Omega}$. This is natural in the field of non-local operators, since their nature forces us to set a boundary value for our equation in the whole complement of $\overline{\Omega}$ instead of just at $\partial\Omega$, see \cite{AMRT,BFRW,BV,Fi}. In our case, the convolution in $\mathbb{R}^N$ only makes sense if $u$ and $v$ take values in the whole space, therefore the extension by $1$ is considered as a Dirichlet boundary condition. We also note that the extended function is not continuous at $\partial\Omega$ in general, see \cite{Ch, ChChR}.

Notice that both equations of our system have a singular absorption term. Therefore, the solutions should decrease at some point of $\overline\Omega$ and they could eventually vanish there. If this happens in a finite time $t=T_1$, the corresponding absorption term blows up and the classical solution no longer exists, then $T = T_1$. We say in this situation that quenching happens and $T$ is the quenching time. More precisely we say that a solution of system \eqref{1.1} given by $(u,v)$ presents quenching in finite time $T$ if
$$
\liminf_{t\nearrow T} \min \left\{\min_{\overline\Omega} u(\cdot,t),\min_{\overline\Omega}
v(\cdot,t) \right\} = 0.
$$
We will often refer to a solution that presents quenching as a quenching solution or just that the solution quenches.

Let us observe that there is no reason a priori for both components to present quenching
simultaneously at the quenching time. We say that the quenching is non-simultaneous if only one of the component reaches the zero level at time $T<+\infty$, while it is simultaneous if both components reach the zero level at time $T<+\infty$.

The phenomenon of quenching appears naturally in physical models such as the nonlinear heat conduction in solid hydrogen, see \cite{R}, or the Arrhenius Law in combustion theory, see \cite{CK}.  Quenching  was studied for the first time in \cite{K} for the problem
$$
v_t = v_{xx} + (1-v)^{-1}
$$
where quenching happens when $v$ reaches the value $v=1$. Notice that the absorption term is of the same type as the one we consider with the change of variables $u=1-v$. Since then, the phenomenon of quenching for different problems has been the issue of intensive study for local diffusion operators, see for example the surveys \cite{C,FL,L1,L2,L3} for a single equation and \cite{FPQR, JZZ, PQR, ZW} for systems. In particular in \cite{ZW} the authors consider the local version of \eqref{1.1},
\begin{equation}\label{1.1local}
\left\{
\begin{array}{ll}
u_t=\Delta u-v^{-p}, \ v_t=\Delta v-u^{-q} \qquad & (x,t) \in \overline\Omega \times (0,T)\\
u(x,t)=1=v(x,t) & (x,t) \in\partial\Omega \times (0,T).
\end{array}\right.
\end{equation}
Regarding simultaneous and non-simultaneous quenching, a very clear picture is obtained for radial solutions with $\Omega =B_R(0)$: the quenching must be simultaneous if $p, q \ge 1$, and non-simultaneous when $p < 1 \le q$ or $q < 1 \le p$; if $p, q < 1$ then both simultaneous and non-simultaneous quenching may happen, depending on the initial data. However, nothing is said about the quenching rates.

The problem of quenching with non-local diffusion has been less studied. We cite the papers
\cite{Fe1,ZMZ} where the authors consider the single equation with same non-local diffusion operator as in our system under Dirichlet and Neumann conditions respectively.

The aim of this article is twofold. On the one hand, we are interested in determining whether classical solutions of \eqref{1.1} exist globally in time or they present quenching in finite time. On the other hand, we study the behaviour of the solutions near the maximal existence time. Let us specify first the notion of solution that we use:
\begin{defi}
We say that $(u,v) \in \mathcal{C}^1([0,T), \mathcal{C}(\overline{\Omega}) \times \mathcal{C}(\overline{\Omega}))$ is a classical solution of  system \eqref{1.1} if it satisfies the equations in \eqref{1.1} pointwise for every $(x,t)\in\overline\Omega \times [0,T)$.
\end{defi}
To study the local existence of solutions we will consider a more general $n$-dimensional system
\begin{equation} \label{sistema.general}
(u_i)_t (x,t) = \int_\Omega J_i (x-y) u_i(y,t) dy +f_i(t,x,u_1,\cdots,u_n), \qquad i=1,\cdots,n
\end{equation}
for $(x,t) \in \overline\Omega \times (t_0,t_1)$ under some conditions on the kernels $J_i$ and the functions $f_i$. We will be able to prove existence, uniqueness and regularity of local solutions, see Theorem \ref{prop.existe.general}, and a comparison result, see Lemma \ref{lemacomparacion}. These results are interesting in their own and could be applied to a broad class of problems. In particular, since \eqref{1.1} is a particular case of \eqref{sistema.general}, they will give us the following theorem.

\begin{Theorem} \label{teo.existe}
Let $u_0$ and $v_0$  be two positive functions in $C(\overline{\Omega})$. Then there exists a unique classical solution $(u,v)$ of the
problem \eqref{1.1}, defined in $[0,T)$, where $T$ is the maximal existence time. Moreover, $(u,v)\in C^\infty ([0,T):C(\overline\Omega)\times C(\overline\Omega))$ and, if $T<+\infty$, then
\begin{equation}\label{eq-m}
\lim_{t\nearrow T} \min \left\{\min_{ x\in\overline\Omega} u(x,t),\min_{x\in\overline\Omega}v(x,t) \right\} = 0,
\end{equation}
and we will say that the solution presents quenching.
\end{Theorem}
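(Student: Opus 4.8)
The plan is to deduce Theorem \ref{teo.existe} from the general local theory in Theorem \ref{prop.existe.general} and the comparison Lemma \ref{lemacomparacion}, together with one a priori bound and a short monotonicity argument for the pointwise minimum. First I would set up local existence, uniqueness and regularity: system \eqref{1.1} is the case $n=2$ of \eqref{sistema.general} with $J_1=J_2=J$ and $f_1(t,x,u_1,u_2)=g(x)-u_1-\lambda u_2^{-p}$, $f_2(t,x,u_1,u_2)=g(x)-u_2-\mu u_1^{-q}$, where $g(x)=\int_{\mathbb{R}^N\setminus\Omega}J(x-y)\,dy\in C(\overline\Omega)$. On the open set $\{u_1>0,\ u_2>0\}$ these $f_i$ are $C^\infty$, hence locally Lipschitz in $(u_1,u_2)$ uniformly on compacts; since $u_0,v_0\in C(\overline\Omega)$ are strictly positive they are bounded below by some $c_0>0$, so Theorem \ref{prop.existe.general} yields a unique $C^1$ solution on a short interval, remaining $\ge c_0/2$. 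Taking the union of all intervals on which such a classical solution exists defines the maximal time $T$; on $[0,T)$ one has $u,v>0$ by construction, so $(u,v)$ is genuinely a classical solution. The $C^\infty$ regularity in time follows either from the regularity statement in Theorem \ref{prop.existe.general} or by bootstrapping: $t\mapsto J*u(\cdot,t)$ and $t\mapsto v(\cdot,t)^{-p}$ are as regular in $t$ as $u,v$ themselves (convolution by $J\in L^1$ being a bounded linear map on $C(\overline\Omega)$, and $v$ being continuous and positive), so the equations let $u_t,v_t$ inherit the regularity of $u,v$ and one gains a derivative at each step.

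The second ingredient is an a priori upper bound. I would check that the constants $(\bar u,\bar v)\equiv(M,M)$ with $M:=\max\{1,\|u_0\|_\infty,\|v_0\|_\infty\}$ form a supersolution: their extension by $1$ is $\le M$ everywhere, so $J*\bar u(x)=\int_\Omega J(x-y)M\,dy+\int_{\mathbb{R}^N\setminus\Omega}J(x-y)\,dy\le M$, whence $J*\bar u-\bar u-\lambda\bar v^{-p}\le-\lambda\bar v^{-p}\le 0=\bar u_t$, and symmetrically for the second equation. Lemma \ref{lemacomparacion} then gives $0<u(x,t)\le M$ and $0<v(x,t)\le M$ on $\overline\Omega\times[0,T)$. (This can also be seen directly by inspecting the first time $\|u(\cdot,t)\|_\infty$ or $\|v(\cdot,t)\|_\infty$ would exceed $M$.)

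Now assume $T<+\infty$ and set $h(t):=\min\{\min_{\overline\Omega}u(\cdot,t),\ \min_{\overline\Omega}v(\cdot,t)\}$, which is continuous and positive on $[0,T)$. The key point is that the upper right Dini derivative of $h$ is bounded above by $M$: if, say, $h(t)$ is attained by $u$ at a point $x_t$, then $u_t(x_t,t)=J*u(x_t,t)-u(x_t,t)-\lambda v(x_t,t)^{-p}\le M-u(x_t,t)-\lambda v(x_t,t)^{-p}\le M$ by the previous step, and likewise if it is attained by $v$; the elementary differentiation rule for minima of $C^1$ families then gives $D^+h(t)\le M$ on $[0,T)$, so $t\mapsto h(t)-Mt$ is non-increasing. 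Next I would prove $\liminf_{t\nearrow T}h(t)=0$: otherwise there are $\delta>0$ and $t^*<T$ with $u,v\ge\delta$ on $\overline\Omega\times[t^*,T)$; by the a priori bound the right-hand sides of \eqref{1.1} are then bounded on $[t^*,T)$, hence $u_t,v_t$ are bounded, so $u(\cdot,t),v(\cdot,t)$ are uniformly Cauchy as $t\nearrow T$ and converge in $C(\overline\Omega)$ to limits $U,V\ge\delta>0$; since $u_t,v_t$ also converge uniformly via the equations, the solution extends to a $C^1$ solution on $[0,T]$, and pasting it (using uniqueness) to the local solution with data $(U,V)$ given by Theorem \ref{prop.existe.general} produces a classical solution on a strictly larger interval, contradicting maximality of $T$. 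Finally, choosing $t_n\nearrow T$ with $h(t_n)\to 0$ and using that $h(t)-Mt$ is non-increasing gives $h(t)\le h(t_n)+M(T-t_n)$ for $t\ge t_n$, so $\limsup_{t\nearrow T}h(t)=0$; as $h\ge 0$, this is exactly \eqref{eq-m}.

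I expect the routine parts to be the verification of the hypotheses of the general theorem and the supersolution computation; the delicate point is the extension argument in the last step — checking that a solution with $\liminf h>0$ genuinely continues as a $C^1$ solution past $T$ so that maximality is truly violated — together with the simple but essential remark that the a priori bound controls $D^+h$ from above, which is precisely what upgrades ``$\liminf=0$'' to ``$\lim=0$''.
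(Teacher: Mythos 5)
Your proposal is correct, and its overall architecture coincides with the paper's: reduce \eqref{1.1} to the general system, invoke Theorem \ref{prop.existe.general} for local existence, uniqueness and $C^\infty$ regularity in time, and use the constant supersolution (the paper's Lemma \ref{lema.MNsuper}, with $(M,N)$ rather than $(M,M)$) to rule out blow-up of the sup norm. The one place where you genuinely diverge is the passage from ``the solution cannot be continued'' to the full limit \eqref{eq-m}. The paper obtains the full limit directly from the dichotomy in Theorem \ref{prop.existe.general}, whose second alternative is already stated as $\lim_{t\to T^-}\inf_x \mathrm{dist}(u(x,t),\partial\mathcal{I})=0$; the $\liminf\Rightarrow\lim$ upgrade is hidden there in the uniform-local-existence-time argument (if the distance to $\partial\mathcal{I}$ stays $\ge a$ along a sequence $s_n\to T$ and the solution is bounded, one can restart at each $s_n$ with a time step $h_a$ independent of $n$, contradicting maximality). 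You instead prove $\liminf h=0$ by a continuation argument and then upgrade to $\lim h=0$ via the observation that $D^+h\le M$, so $h(t)-Mt$ is non-increasing. Both mechanisms are sound; yours is self-contained and makes the quantitative reason for the full limit explicit (a one-sided Lipschitz bound on the minimum, in the spirit of the paper's later Lemma \ref{lema.minlipschitz}), whereas the paper's is already packaged inside the general theorem and requires no further estimate on $h$. All the supporting computations you give (the Lipschitz hypotheses on $f$ over compacts of $(0,\infty)^2$, the supersolution inequality $J*\bar u\le M$ using $M\ge1$ and $\int_{\mathbb{R}^N}J=1$, and the Cauchy-in-$C(\overline\Omega)$ extension when $u,v\ge\delta$) check out.
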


Our next step is to study the existence of global and quenching solutions, which is closely related with the existence of stationary solutions. In this direction we get the following result.

\begin{Theorem} \label{teo.estacionarias}
There exists an open neighbourhood $U$ of $(0,0)$ in $\mathbb{R}^2$ such that for $(\lambda,\mu)\in ((0,\infty) \times (0,\infty)) \cap \overline{U}$ there exist both global and quenching solutions, whereas for $(\lambda,\mu) \in ((0,\infty) \times (0,\infty)) \cap (\mathbb{R}^2 \backslash \overline{U})$ all solutions present quenching. Moreover, the following are satisfied:

\begin{enumerate} [i)]
    \item if $(\lambda_0,\mu_0) \in ((0,\infty) \times (0,\infty)) \cap \overline{U}$, then $(0,\lambda_0] \times (0,\mu_0] \subset \overline{U}$;
    \item $((0,\infty) \times (0,\infty)) \cap \overline{U} \subset (0,1) \times (0,1)$;
    \item if $(w_1,z_1)$ is a stationary solution of \eqref{1.1} with parameters $(\lambda_1,\mu_1)$, $(w_2,z_2)$ is a stationary solution with parameters $(\lambda_2,\mu_2)$ and $\lambda_1 \leq \lambda_2$, $\mu_1 \leq \mu_2$; then $w_1(x) \geq w_2(x)$ and $z_1(x) \geq z_2(x)$ for every $x\in\overline\Omega$.
\end{enumerate}
\end{Theorem}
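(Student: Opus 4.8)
\medskip
\noindent\emph{Proof proposal.} The plan is to identify the dichotomy with the set $S:=\{(\lambda,\mu)\in(0,\infty)^2:\ \eqref{1.1}\text{ admits a stationary solution}\}$ and to take $U$ to be the interior in $\mathbb{R}^2$ of $S\cup\{\lambda\le 0\}\cup\{\mu\le 0\}$. I would start from the linear operator $K\varphi(x):=\int_\Omega J(x-y)\varphi(y)\,dy$ on $\mathcal C(\overline\Omega)$: it is compact and order preserving, and $\rho(K)<1$ --- if $\rho(K)=1$, a Krein--Rutman eigenfunction $\varphi_1\ge 0$ would satisfy $\varphi_1=K\varphi_1$, and the strong maximum principle (here $\Omega$ is connected and $J(0)>0$) would force $\varphi_1$ to be a positive constant, hence $\int_{\mathbb{R}^N\setminus\Omega}J(\cdot-y)\,dy\equiv 0$ on $\overline\Omega$, which is false. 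Thus $I-K=\sum_{n\ge 0}K^n$ is invertible with positive inverse. A pair $(w,z)$ that equals $1$ off $\overline\Omega$ is stationary for \eqref{1.1} precisely when $\phi:=1-w$, $\psi:=1-z$ solve $(I-K)\phi=\lambda(1-\psi)^{-p}$, $(I-K)\psi=\mu(1-\phi)^{-q}$ on $\overline\Omega$ with $0\le\phi,\psi<1$; evaluating these identities at interior maxima of $\phi,\psi$ gives $0<w,z<1$ on $\overline\Omega$, and evaluating the two equations at the minimum of $w$ and at the minimum of $z$ gives $\lambda<1$ and $\mu<1$. This proves (ii).

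Next I would establish the structure of $S$. Since $(1,1)$ is a stationary supersolution for every $\lambda,\mu>0$, the monotone iteration $\phi^{(n+1)}=(I-K)^{-1}[\lambda(1-\psi^{(n)})^{-p}]$, $\psi^{(n+1)}=(I-K)^{-1}[\mu(1-\phi^{(n)})^{-q}]$ from $(\phi^{(0)},\psi^{(0)})=(0,0)$ is increasing and, if $\lambda,\mu$ are small enough, stays below $(\tfrac12,\tfrac12)$, so it converges to a stationary solution; hence $S$ contains a neighbourhood of the origin. The same iteration, trapped between a stationary solution of $(\lambda_1,\mu_1)$ (a subsolution for any smaller parameters) and the supersolution $(1,1)$, shows $S$ is hereditary: $(\lambda_1,\mu_1)\in S$, $0<\lambda_2\le\lambda_1$, $0<\mu_2\le\mu_1$ imply $(\lambda_2,\mu_2)\in S$. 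Consequently $U$, and $\overline U$, are downward closed, which is (i). Moreover $S$ is closed in $(0,\infty)^2$: along parameters $(\lambda_n,\mu_n)\to(\lambda,\mu)$ the stationary solutions satisfy $0<w_n,z_n<1$, and they stay uniformly away from $0$ --- if $w_n(\xi_n)=\min_{\overline\Omega}w_n\to 0$ then $J\ast z_n(\xi_n)-z_n(\xi_n)=\mu_n w_n(\xi_n)^{-q}\to\infty$, contradicting $J\ast z_n(\xi_n)-z_n(\xi_n)<1$ --- so a subsequence converges to a stationary solution of $(\lambda,\mu)$. Since also $S\subseteq\overline{\operatorname{int}S}$ (the points $(t\lambda_0,t\mu_0)$ with $t<1$ lie in $\operatorname{int}S$ by the hereditary property), one gets $\overline U\cap(0,\infty)^2=S$.

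Now for the dichotomy. If $(\lambda,\mu)\in S$ with stationary solution $(w,z)$, then $(w,z)$ is a subsolution, so by the comparison Lemma any solution with $u_0\ge w$, $v_0\ge z$ stays above $(w,z)>0$ and is global; on the other hand, for every $(\lambda,\mu)$ the solution from $(u_0,v_0)\equiv(\varepsilon,\varepsilon)$ with $\varepsilon$ small quenches, since as long as $u,v\le\varepsilon$ one has $u_t,v_t\le 2-\min\{\lambda\varepsilon^{-p},\mu\varepsilon^{-q}\}<-1$. Hence both global and quenching solutions exist exactly for $(\lambda,\mu)\in S=\overline U\cap(0,\infty)^2$. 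If instead $(\lambda,\mu)\notin S$, every solution quenches: put $A\ge\max\{1,\|u_0\|_\infty,\|v_0\|_\infty\}$ and let $(u^A,v^A)$ be the solution from $(A,A)$, which is non-increasing in $t$; were it global it would decrease to an equilibrium $(w_\infty,z_\infty)$, necessarily with (say) $w_\infty(x_*)=0$ --- otherwise $(w_\infty,z_\infty)$ would be a stationary solution --- and then $\mu(u^A(x_*,t))^{-q}\to\infty$ while $J\ast v^A(x_*,\cdot)-v^A(x_*,\cdot)\le A$, so $v^A_t(x_*,t)\le -1$ for $t$ large, forcing $v^A(x_*,\cdot)$ to become negative in finite time, impossible. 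So $(u^A,v^A)$ quenches at some $T_A<\infty$, and by comparison $(u,v)\le(u^A,v^A)$ quenches as well. This is the asserted dichotomy.

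It remains to prove (iii), which is where the real difficulty lies. It is a comparison principle for the cooperative stationary system: if $(w_1,z_1)$, $(w_2,z_2)$ are stationary for $(\lambda_1,\mu_1)\le(\lambda_2,\mu_2)$, then $(w_1,z_1)$ is a stationary supersolution and $(w_2,z_2)$ a stationary subsolution of the $(\lambda_2,\mu_2)$ problem, both equal to $1$ off $\overline\Omega$. Writing $G:=(1-w_1)-(1-w_2)$ and $H:=(1-z_1)-(1-z_2)$, which vanish off $\overline\Omega$, I would evaluate $(I-K)G$ and $(I-K)H$ at a point realising $\max\{\sup_{\overline\Omega}G,\sup_{\overline\Omega}H\}$ and use the monotonicity of $s\mapsto(1-s)^{-p}$ together with $\lambda_1\le\lambda_2$, $\mu_1\le\mu_2$, and a strong maximum principle along the connected set $\overline\Omega$, to rule out that this maximum is positive; this yields $w_1\ge w_2$, $z_1\ge z_2$, and, with $(\lambda_1,\mu_1)=(\lambda_2,\mu_2)$, uniqueness of the stationary solution. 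I expect this coupled strong-maximum-principle step --- closing the two inequalities simultaneously, rather than a single scalar argument --- to be the main obstacle; the invertibility and positivity of $I-K$, the monotone-iteration existence, and the quenching estimates are routine once set up.
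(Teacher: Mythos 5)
Your global strategy coincides with the paper's: identify $\overline U\cap(0,\infty)^2$ with the set $S$ of parameters admitting stationary solutions, prove existence near the origin, downward closure, the bound $\lambda,\mu<1$ by evaluating the stationary identities, existence of quenching solutions from small data (this is Lemma \ref{condicionquenching}), and the ``all solutions quench'' half by comparison with a monotone decreasing solution launched from large constant data. Two of your technical choices differ harmlessly from the paper's: you construct stationary solutions by a monotone iteration based on the positivity of $(I-K)^{-1}$, where the paper applies the Implicit Function Theorem at $(\lambda,\mu)=(0,0)$ (Lemma \ref{lema.existenciaestacionarias}), proving injectivity of $I-K$ by the same chaining/maximum argument you use to get $\rho(K)<1$; and you compare with the flow from $(A,A)$, where the paper combines Lemma \ref{lema.globalesmenorque1} with the flow from $(1,1)$ (Lemma \ref{lema.1quencheatodo}).

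The genuine gap is part (iii), which you leave as a sketch and correctly flag as the main obstacle; as proposed, the coupled strong-maximum-principle argument does not close. At a point $x_0$ realising a positive value of $\max\{\sup_{\overline\Omega}G,\sup_{\overline\Omega}H\}$, say attained by $G=w_2-w_1$, the identity $(I-K)G(x_0)=\lambda_1(1-\psi_1(x_0))^{-p}-\lambda_2(1-\psi_2(x_0))^{-p}$ has the favourable sign only if one already knows $H(x_0)=z_2(x_0)-z_1(x_0)\le0$ at that same point, which is part of the conclusion; knowing only $H(x_0)\le G(x_0)$ does not suffice, so the two inequalities cannot be closed simultaneously this way. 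The paper avoids the elliptic comparison entirely: $(w_2,z_2)$ is a subsolution of the $(\lambda_1,\mu_1)$-problem and lies below $1$ by Lemma \ref{lema.globalesmenorque1}, so the parabolic comparison principle (Lemma \ref{lemacomparacion}) traps the non-increasing flow started from data $\equiv1$ above $(w_2,z_2)$, and Lemma \ref{lema.1quencheatodo} makes this flow converge to a stationary solution dominating $(w_2,z_2)$; this is the route to take for (iii) (it compares the \emph{maximal} stationary solutions, which is what the paper's own step $iii)$ actually establishes). A second, smaller gap: in your closedness argument for $S$, ``a subsequence converges'' presupposes equicontinuity of $(w_n,z_n)$, which is not immediate since $w_n=F_n-\lambda_nz_n^{-p}$ and $z_n=G_n-\mu_nw_n^{-q}$ bound the oscillation of each component only in terms of the other, and the resulting constant need not be a contraction; the paper sidesteps this by taking monotone pointwise limits in $\mu$ for fixed $\lambda$, which your hereditary property also makes available.
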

In this last result we highlight some of the properties of the set $U$. However, there are still many unanswered questions: it would be interesting to completely determine its geometry with respect to $(\lambda,\mu)$, as well as the behaviour of the stationary solutions when we approach the boundary of $U$ from the interior, where some kind of bifurcation is expected. This is an open problem even in the particular case of the system with only one equation.

Next, we look at whether quenching is simultaneous or non-simultaneous and we obtain a similar result as in the local system \eqref{1.1local} for $\Omega=B_R(0)$. Even more, for $p,q\ge1$ the quenching set of both components coincide. These quenching sets are defined as follows:
\begin{defi}
    Let $(u,v)$ be a solution of \eqref{1.1} that presents quenching at time $T<+\infty$. We define its quenching set as
    \begin{equation*}
        Q((u,v)) = \left\{x\in\overline\Omega : \exists \, t_n \rightarrow T^-, x_n \rightarrow x \text{ such that }  \min\{u(x_n,t_n), v(x_n,t_n) \} \rightarrow 0 \right\}.
    \end{equation*}
    Additionally, we can define the quenching sets associated to each one of the components:
    \begin{equation*}
        Q(u) = \left\{x\in\overline\Omega : \exists \, t_n \rightarrow T^-, x_n \rightarrow x \text{ such that } u(x_n,t_n) \rightarrow 0 \right\}.
    \end{equation*}
    \begin{equation*}
        Q(v) = \left\{x\in\overline\Omega : \exists \, t_n \rightarrow T^-, x_n \rightarrow x \text{ such that } v(x_n,t_n) \rightarrow 0 \right\}.
    \end{equation*}
\end{defi}

\begin{Theorem}\label{teo.simultaneo}
Let $(u,v)$ be a solution of \eqref{1.1} that presents quenching at time $T<+\infty$. Then,

i) if $p,q\geq 1$, quenching is always simultaneous and $Q(u)=Q(v)$;

ii) if $q\ge 1>p$, there exists $\alpha>0$ such that $u(x,t) \geq \alpha$ for every $(x,t)\in\overline\Omega \times [0,T)$;

iii) if $p\ge 1>q$, there exists $\alpha>0$ such that $v(x,t) \geq \alpha$ for every $(x,t)\in\overline\Omega \times [0,T)$;

iv) for $p,q<1$ there can be both simultaneous and non-simultaneous quenching.
\end{Theorem}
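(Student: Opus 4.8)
The plan is to exploit the structure of the singular absorption in \eqref{1.1} together with the comparison principle (Lemma \ref{lemacomparacion}), arguing separately in each regime. For part i), suppose $(u,v)$ quenches at $T<\infty$ and, say, $u(x_n,t_n)\to 0$ along $t_n\nearrow T$, $x_n\to x_0$. Since $J*u-u\ge -\Vert u\Vert_\infty$ stays bounded (the convolution term is bounded as long as $u,v$ are bounded and positive), the key point is that the ODE comparison $v_t\le \text{(bounded)}-\mu u^{-q}$ forces $v$ to drop fast once $u$ is small; more precisely, integrating along a suitable characteristic point and using $q\ge1$ we get $\int_0^T u^{-q}(x,s)\,ds=+\infty$ at $x_0$ (otherwise $v$ would stay bounded below and the first equation, with $p\ge1$, would keep $u$ bounded below too, a contradiction). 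Symmetrically $\int_0^T v^{-p}(x_0,s)\,ds=+\infty$. From here one shows $u(x_0,t)\to0$ and $v(x_0,t)\to0$ as $t\nearrow T$: if, say, $v$ stayed bounded below near $x_0$ then $\lambda v^{-p}$ would be bounded near $x_0$, so $u_t\ge -C$ near $x_0$ and $u$ could not reach $0$ in finite time there. This gives $Q(u)=Q(v)$ and simultaneity. For the inclusion $Q(u)\subset Q(v)$ at an arbitrary quenching point one localizes the same argument using continuity of $u,v$ and the non-negativity of $J$.

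For parts ii) and iii) (they are symmetric, so I treat ii): $q\ge1>p$). Here the idea is that the equation for $u$ has a \emph{sublinear-type} singularity through $v^{-p}$ with $p<1$, which is too weak to drive $u$ to zero. Concretely, I would fix the quenching time $T<\infty$ and the (finite) bound $M=\max\{\Vert u\Vert_{L^\infty(\overline\Omega\times[0,T))},\Vert v\Vert_{L^\infty}\}$, then compare $u$ with the solution of a scalar ODE. Since $v\le M$, we have, for each $x$,
\begin{equation*}
u_t(x,t)\ \ge\ J*u(x,t)-u(x,t)-\lambda M^{-p}\ \ge\ -u(x,t)-\lambda M^{-p},
\end{equation*}
but this crude bound only gives $u(x,t)\ge e^{-t}(u_0-\lambda M^{-p}(e^t-1))$, which is not yet a uniform positive lower bound. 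The refinement is to use that $v$ itself cannot quench \emph{too fast}: from $v_t\ge -v-\mu u^{-q}\ge -v-\mu(\min u)^{-q}$ and a bootstrap, or better, to bound $v$ from below in terms of $\min_{\overline\Omega}u$ on a short time interval and feed this into the $u$-equation. The cleanest route is a differential inequality for $m(t):=\min_{\overline\Omega}u(\cdot,t)$: since $J*u\ge 0$ and $u\le M$, and $v\ge v_*(t):=\min_{\overline\Omega}v(\cdot,t)$, one gets $m'(t)\ge -M-\lambda v_*(t)^{-p}$, while $v_*'(t)\ge -M-\mu m(t)^{-q}$. With $p<1\le q$, standard analysis of this planar cooperative(-type) ODE system near $m=0$ shows $m$ stays bounded below by a positive constant $\alpha$ on $[0,T)$ whenever $T<\infty$: the $v_*^{-p}$ forcing with $p<1$ is integrable against the blow-up of $m^{-q}$ that would be needed, so $m$ cannot reach $0$. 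Part iii) is identical with the roles of $(u,p)$ and $(v,q)$ interchanged.

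For part iv) it suffices to exhibit examples. When $p,q<1$: a non-simultaneous example is obtained by taking the parameters and initial data so that one component, say $v$, starts very close to $0$ and $u$ starts large; by continuous dependence and the bound from parts ii)-iii)-type estimates (with $p<1$) one arranges that $u$ stays bounded below while $v$ reaches $0$ — i.e. the "fast" variable quenches alone. A simultaneous example for $p,q<1$ comes from the symmetric situation: take $p=q$, $\lambda=\mu$, and symmetric initial data $u_0=v_0$; then by uniqueness $u\equiv v$ for all time, so the scalar problem $w_t=J*w-w-\lambda w^{-p}$ with suitably large $\lambda$ (outside $\overline U$, using Theorem \ref{teo.estacionarias}) quenches, and $u=v=w$ quench together. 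One must check that for $p,q<1$ there do exist parameters forcing quenching and others allowing global solutions, which is exactly the content of Theorem \ref{teo.estacionarias} combined with part ii) of that theorem guaranteeing $\overline U\subset(0,1)\times(0,1)$.

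The main obstacle is the lower bound in parts ii)-iii): turning the heuristic "a $p<1$ singularity is too weak to cause quenching" into a rigorous uniform bound $u\ge\alpha$. The crude pointwise inequality loses too much, so the argument must genuinely use the coupling — either via the planar ODE system for $(\min u,\min v)$ above, or by a direct integral estimate showing $\int_0^T v^{-p}(x,s)\,ds<\infty$ at every point (which follows once one knows $v$ vanishes at worst like $(T-t)^{\theta}$ with $p\theta<1$, an a priori quenching-rate bound that itself requires the kind of estimates developed elsewhere in the paper). Making the chain "$v$ quenches no faster than a power $\Rightarrow$ $v^{-p}\in L^1_t \Rightarrow$ $u$ bounded below" airtight, uniformly in $x$, is the delicate point; everything else is comparison principle and elementary ODE analysis.
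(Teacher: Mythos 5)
Your plan for parts i)--iii) misses the key idea of the paper and contains two steps that are actually false. The paper multiplies the $u$-equation by $\mu u^{-q}$ and the $v$-equation by $\lambda v^{-p}$ and \emph{subtracts}, so that the singular product $\lambda\mu u^{-q}v^{-p}$ cancels; integrating in time and observing that $\int_0^t \mu u^{-q}(x,s)\,ds=\int_0^t (J*v-v-v_t)(x,s)\,ds$ is uniformly bounded (because $0<v\le N$), one obtains the pointwise-in-$x$ bound $\bigl|\mu\Psi_q[u](x,t)-\lambda\Psi_p[v](x,t)\bigr|\le \tilde D$, where $\Psi_a[g]=g^{1-a}/(1-a)$ for $a\ne1$ and $\log g$ for $a=1$. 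Since $\Psi_a[g]\to-\infty$ as $g\to0$ exactly when $a\ge1$, this single inequality yields i) (if one primitive blows up along a sequence the other must), and ii)--iii) (the component whose exponent is $\ge1$ in the \emph{other} equation cannot vanish). Your sketch instead asserts that $\int_0^T u^{-q}(x_0,s)\,ds=+\infty$ at a quenching point of $u$; this is false --- the integral is always finite, precisely by the computation above, and this finiteness is what makes the paper's estimate work. You also argue that $u_t\ge -C$ near $x_0$ would prevent $u$ from reaching $0$ in finite time, which is false (a function with bounded derivative can vanish in finite time). For ii)--iii) you propose a planar differential-inequality system for $(\min u,\min v)$ and yourself acknowledge that the resulting argument is circular and not closed; that unresolved circularity is exactly the gap, and the functional inequality is the missing tool.

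For part iv) your simultaneous example ($u_0=v_0$, giving $u\equiv v$) only works when $p=q$ and $\lambda=\mu$, whereas the claim is for arbitrary fixed $p,q<1$ and $\lambda,\mu>0$; moreover $\lambda$ is given and cannot be chosen ``suitably large''. The paper instead runs a shooting argument: for initial data $(\delta u_0,(1-\delta)v_0)$ small enough that Lemma \ref{condicionquenching} applies, one has $u_\delta,v_\delta\ge T_\delta-t$ and $T_\delta\to0$ as $\delta\to1$, so $\int_0^{T_\delta}(T_\delta-s)^{-p}\,ds=T_\delta^{1-p}/(1-p)\to0$ and $u_\delta$ stays bounded below (this is the quantitative ingredient your non-simultaneous example is missing); the sets of $\delta$ giving each type of non-simultaneous quenching are then shown to be nonempty and open (using continuity of $T_\delta$ in $\delta$), and connectedness of $(0,1)$ forces the existence of some $\delta$ with simultaneous quenching. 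Nothing in your sketch produces simultaneous quenching when $p\ne q$ or $\lambda\ne\mu$, so part iv) also has a genuine gap.
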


We note that the existence of non-simultaneous quenching for a weakly coupled quenching system contrasts with the analogous blow-up problem,
$$
\left\{
\begin{array}{l}
\displaystyle u_t=\int_{\Omega}J(x-y) u(y,t)dy -u(x,t)+v^p(x,t) \\
\displaystyle v_t=\int_{\Omega}J(x-y) v(y,t)dy -v(x,t)+u^q(x,t)
\end{array}\right.
$$
where the blow-up is always simultaneous, see \cite{Fe2}.

Finally we look for the quenching rate. To simplify the notation, we say that
\begin{equation*}
    f(t) \sim g(t) \iff \; \exists \, C_1,C_2>0, t_0 \in [0,T) : \, C_1 f(t) \leq g(t) \leq C_2 f(t) \text{ for } t\in [t_0,T).
\end{equation*}
This essentially gives a similarity in the behaviour of both functions close to time $T$. We start with the non-simultaneous case.

\begin{Theorem}\label{teo.tasas.nosimultanea}
Let $(u,v)$ be a solution of \eqref{1.1} that presents quenching at time $T<+\infty$.

i) Assume that  $v(x,t) \geq \delta > 0$ for every $(x,t)\in\overline\Omega \times [0,T)$. Then $q<1$ and
$$
\min_{x\in\overline\Omega} u(x,t)\sim (T-t).
$$

ii) Assume that  $u(x,t)\geq \delta >0$ for every $(x,t)\in\overline\Omega \times [0,T)$. Then $p<1$ and
$$
\min_{x\in\overline\Omega} v(x,t)\sim (T-t).
$$
\end{Theorem}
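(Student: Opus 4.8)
The plan is to prove part i); part ii) follows by exchanging the roles of $u,v$ and of $p,q$. Throughout put $m(t)=\min_{x\in\overline\Omega}u(x,t)$, and recall $T<+\infty$. First I would fix the standing facts. Comparing (Lemma \ref{lemacomparacion}) with the constant supersolutions $\max\{1,\|u_0\|_\infty\}$ and $\max\{1,\|v_0\|_\infty\}$ — a constant $K\ge 1$ is a supersolution of either equation since $J\ast K-K\le 0$ and the absorption is nonpositive — gives $0<u,v\le C_0$ on $\overline\Omega\times[0,T)$. Since $v\ge\delta$ the $v$-component never quenches, so $m(t)\to 0$ as $t\to T$ by \eqref{eq-m}; also the right-hand side of the $u$-equation is now bounded, hence $u_t$ is bounded, $m$ is Lipschitz on $[0,T)$ and $m(T^-)=0$. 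For $q<1$: by Theorem \ref{teo.simultaneo}, if $q\ge 1$ then either $p\ge 1$, so quenching is simultaneous and $v$ quenches, or $p<1$, so $u\ge\alpha>0$ and hence $v$ must quench — both contradicting $v\ge\delta$.

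For the upper bound, set $c_2=\lambda\delta^{-p}$. At a.e.\ $t$, $m'(t)=u_t(x_t,t)$ for any minimiser $x_t$ of $\tilde u(\cdot,t)$ over $\mathbb{R}^N$ (standard differentiation of a minimum), and $x_t\in\overline\Omega$ once $m(t)<1$; since $\tilde u(x_t,t)$ is a global minimum, $J\ast u(x_t,t)-u(x_t,t)=\int J(x_t-y)\bigl(\tilde u(y,t)-m(t)\bigr)\,dy\ge 0$. Hence $m'(t)\ge-\lambda v(x_t,t)^{-p}\ge-c_2$, and integrating on $[t,T]$ yields $m(t)\le c_2(T-t)$.

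The heart of the matter is the lower bound. Because $u_t$ is bounded, $u(\cdot,t)\to\phi:=u(\cdot,T)$ uniformly on $\overline\Omega$, where $0\le\phi\le 1$ (the bound $\phi\le1$ since $\max_{\overline\Omega}u(\cdot,t)$ is eventually $\le 1$) and $Z:=\{\phi=0\}\neq\varnothing$; every accumulation point of the minimisers $x_t$ lies in $Z$ (otherwise $u(x_t,t)\ge\tfrac12\min_{\overline\Omega\setminus N}\phi>0$ for some neighbourhood $N$ of $Z$, contradicting $m(t)\to 0$). Now fix $z\in Z$; differentiating the $u$-equation in $t$,
\[
u_{tt}(z,t)=\bigl(J\ast u_t(z,t)-u_t(z,t)\bigr)+p\lambda\,v(z,t)^{-p-1}\bigl(J\ast v(z,t)-v(z,t)-\mu\, u(z,t)^{-q}\bigr).
\]
The first bracket is bounded, but $u(z,t)\to 0$ with $q>0$ forces $u(z,t)^{-q}\to+\infty$ while $v(z,t)^{-p-1}\ge C_0^{-p-1}>0$, so $u_{tt}(z,t)\to-\infty$ as $t\to T$. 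Therefore $u_t(z,\cdot)$ is eventually strictly decreasing, with limit $w(z):=\lim_{t\to T}u_t(z,t)=J\ast\phi(z)-\lambda v(z,T)^{-p}$. Were $w(z)\ge 0$, then $u_t(z,t)>w(z)\ge 0$ near $T$, so $u(z,\cdot)$ would increase toward $0$, contradicting $u(z,\cdot)>0$; hence $w(z)<0$, whence $u_t(z,t)\le\tfrac12 w(z)<0$ near $T$ and so $u(z,t)\ge\tfrac12|w(z)|(T-t)$.

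It remains to upgrade this pointwise-in-$z$ estimate to a uniform one: to produce $\eta,\tau>0$ and a neighbourhood $N$ of $Z$ with $u_t(x,t)\le-\eta$ on $N\times[T-\tau,T)$. Granting it, $u(x,t)=\phi(x)+\int_t^T(-u_t(x,s))\,ds\ge\eta(T-t)$ on $N\times[T-\tau,T)$, and since $x_t\in N$ for $t$ near $T$ we get $m(t)\ge\eta(T-t)$, which with the previous step gives $m(t)\sim(T-t)$. This uniformity is, I expect, the main obstacle. Since $v_t$ — and hence $u_{tt}$ — blows up precisely on $Z$, the uniform convergence $v(\cdot,t)\to v(\cdot,T)$ that would make $u_t(\cdot,t)\to w(\cdot)$ a uniform limit (and $w<0$ on a neighbourhood of $Z$, by compactness of $Z$ and continuity of $w$) is not for free: it amounts to $\sup_x\int_t^T u(x,s)^{-q}\,ds\to 0$. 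The exponent condition $q<1$ is essential here (it makes $\int^T(T-s)^{-q}\,ds$ finite), and the plan is to combine it with the just-proved bound $m(s)\le c_2(T-s)$ and with the uniform-in-$x$ bound on $\mu\int_{t_0}^T u(x,s)^{-q}\,ds$ forced by $v\ge\delta$; converting these into the stated uniform estimate, together with the corresponding control of $z\mapsto w(z)$ on $Z$, is where the real work lies.
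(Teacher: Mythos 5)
Your upper bound $m(t)\le c_2(T-t)$ and your derivation of $q<1$ from Theorem \ref{teo.simultaneo} are both correct and essentially match the paper. The genuine gap is exactly where you flagged it: the lower bound. The pointwise estimate $u(z,t)\ge\tfrac12|w(z)|(T-t)$ at a fixed $z\in Z$ does not control $m(t)=u(x_u(t),t)$, since $m(t)\le u(z,t)$; and the proposed repair --- uniform negativity of $u_t$ on a space-time neighbourhood $N\times[T-\tau,T)$ of $Z$, via uniform convergence of $v(\cdot,t)$ as $t\to T$ and continuity of $z\mapsto w(z)$ --- is left as a programme rather than carried out. It is also the hardest route: it requires $\sup_{x}\int_t^T u(x,s)^{-q}\,ds\to0$, essentially a uniform version of the rate you are trying to prove, and the bound $m(s)\le c_2(T-s)$ only controls $u$ at the minimiser, not $u(x,s)$ for every $x$ in a neighbourhood of $Z$, so it does not by itself give the required uniform integrability of $u(x,\cdot)^{-q}$.

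The paper sidesteps uniformity entirely by working along the minimiser curve rather than at fixed spatial points. The chain is: (a) since $m(t)\to0$ and $m$ is locally Lipschitz with $m'(t)=u_t(x_u(t),t)$ a.e.\ (Lemma \ref{lema.minlipschitz}), there is a sequence $t_n\to T$ with $u_t(x_u(t_n),t_n)=-c_n<0$; (b) integrating the two-sided bound $-M-\lambda\delta^{-p}\le u_t\le M$ forwards and backwards in time from $(x_u(t),t)$ shows $u(x_u(t),\tau)\le 2\varepsilon$ for \emph{all} $t,\tau\in(t_\varepsilon,T)$, i.e.\ $u$ stays small at the point $x_u(t)$ throughout the terminal time window, not only at time $t$; (c) the second-derivative estimate $u_{tt}\le C_1-C_2u^{-q}$ then makes $\tau\mapsto u_t(x_u(t),\tau)$ decreasing on $(t_\varepsilon,T)$; (d) the mean-value sandwich from Lemma \ref{lema.minlipschitz} combined with this monotonicity gives $u_t(x_u(t),t)<u_t(x_u(\tau),\tau)$ for $t_\varepsilon<\tau<t<T$, so taking $\tau=t_n$ yields $m'(t)\le-c_n$ a.e.\ on $[t_n,T)$ and hence $m(t)\ge c_n(T-t)$. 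Steps (b)--(d) are the missing idea you would need to close your argument.
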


The simultaneous case is more involved because we need a relation between the minimum of both components.
Let us define $x_u(t)$ and $x_v(t)$ as follows
$$
u(x_u(t),t):= \min_{\overline\Omega} u(\cdot,t),\qquad
v(x_v(t),t):= \min_{\overline\Omega} v(\cdot,t)
$$
Then to obtain the quenching rates, we need to relate $u(x_u(t),t)$ with $v(x_u(t),t)$ and $u(x_v(t),t)$ with $v(x_v(t),t)$. This is given in Lemma  \ref{lema-estimaciones} for $\max\{p,q\}\ge1$ while for $p,q<1$ we must impose that both components reach the minimum at the same point for all time, $x_u(t)=x_v(t)$. This is the case if $\Omega$ is a ball and both components are radially increasing.

\begin{Theorem} \label{lema.qrate.sim}
Let $\max\{p,q\}\ge1$ and $(u,v)$ be a solution of \eqref{1.1} that presents quenching at time $T<+\infty$. Then

$i)$ for $p,q>1$,
$$
u(x_u (t),t)\sim (T-t)^{\frac{p-1}{pq-1}}, \qquad v(x_u (t),t)\sim (T-t)^{\frac{q-1}{pq-1}}
$$
and
$$
u(x_v (t),t)\sim (T-t)^{\frac{p-1}{pq-1}}, \qquad v(x_v (t),t)\sim (T-t)^{\frac{q-1}{pq-1}}.
$$

$ii)$ for $p>1=q$,
$$
u(x_u (t),t)\sim (T-t) |\log (T-t)|^{\frac{-p}{1-p}}, \qquad v(x_u (t),t)\sim |\log (T-t)|^{\frac{1}{1-p}}
$$
and
$$
u(x_v (t),t)\sim (T-t) |\log (T-t)|^{\frac{-p}{1-p}}, \qquad v(x_v (t),t)\sim |\log (T-t)|^{\frac{1}{1-p}}.
$$

$iii)$ for $p=1<q$,
$$
u(x_u (t),t)\sim |\log (T-t)|^{\frac{1}{1-q}}, \qquad v(x_u (t),t)\sim (T-t) |\log (T-t)|^{\frac{-q}{1-q}}
$$
and
$$
u(x_v (t),t)\sim |\log (T-t)|^{\frac{1}{1-q}}, \qquad v(x_v (t),t)\sim (T-t) |\log (T-t)|^{\frac{-q}{1-q}}.
$$

$iv)$ for $p=1=q$,
$$
u(x_u (t),t)\sim (T-t)^{\frac{\lambda}{\lambda+\mu}}, \qquad v(x_u (t),t)\sim (T-t)^{\frac{\mu}{\lambda+\mu}}
$$
and
$$
u(x_v (t),t)\sim (T-t)^{\frac{\lambda}{\lambda+\mu}}, \qquad v(x_v (t),t)\sim (T-t)^{\frac{\mu}{\lambda+\mu}}.
$$
\end{Theorem}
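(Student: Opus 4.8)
The plan is to reduce the PDE/integro-differential problem to a system of ODE inequalities for the two quantities $m(t):=u(x_u(t),t)$ and $n(t):=v(x_v(t),t)$ (and their ``cross'' values $u(x_v(t),t)$, $v(x_u(t),t)$), then integrate. The starting point is that at a point $x_u(t)$ realizing the minimum of $u(\cdot,t)$ the nonlocal diffusion term is nonnegative, $J\ast u(x_u(t),t)-u(x_u(t),t)=\int_\Omega J(x_u(t)-y)(u(y,t)-u(x_u(t),t))\,dy+\int_{\mathbb{R}^N\setminus\Omega}J(x_u(t)-y)(1-u(x_u(t),t))\,dy\ge 0$, using $u\le 1$ (which should follow from a comparison argument, since the data are bounded by $1$ after extension and the absorption terms are negative). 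Combined with the standard fact that $t\mapsto \min_{\overline\Omega}u(\cdot,t)$ is Lipschitz with $\frac{d}{dt}m(t)\ge -\lambda\, v(x_u(t),t)^{-p}$ a.e., and symmetrically $\frac{d}{dt}n(t)\ge -\mu\, u(x_v(t),t)^{-q}$, we get lower bounds for the components in terms of $(T-t)$. For the matching upper bounds one uses that since the solution quenches at $T$, at least one component's minimum tends to $0$, and then the equation forces the relevant absorption term to be non-integrable near $T$.

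The crucial ingredient, already isolated by the authors, is Lemma \ref{lema-estimaciones}: for $\max\{p,q\}\ge 1$ it provides a two-sided comparison between $u(x_u(t),t)$ and $v(x_u(t),t)$ (and between $u(x_v(t),t)$ and $v(x_v(t),t)$), of the form $v(x_u(t),t)\sim g\big(u(x_u(t),t)\big)$ for an explicit $g$ depending on the regime. Granting this, each case becomes a scalar analysis:
\begin{itemize}
\item[$i)$] For $p,q>1$ one expects $v(x_u(t),t)\sim u(x_u(t),t)^{q-1}$ (and the symmetric relation at $x_v$), coming from integrating $v_t\ge -\mu u^{-q}$ against $u_t\ge-\lambda v^{-p}$; plugging into $m'(t)\sim -v(x_u(t),t)^{-p}\sim -m(t)^{-p(q-1)}$ and integrating gives $m(t)\sim (T-t)^{1/(1+p(q-1))}=(T-t)^{(p-1)/(pq-1)}$ after simplification. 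Wait—one must be careful: the exponent bookkeeping must reproduce $\frac{p-1}{pq-1}$, so the correct relation is $v(x_u(t),t)\sim u(x_u(t),t)^{(q-1)/(p-1)}$ coming from the balance $u^{p-1}\sim v^{q-1}$ near quenching; I would verify this balance from Lemma \ref{lema-estimaciones} and then integrate $m'\sim -m^{-p}\cdot m^{p(q-1)/(p-1)}$ correctly.
\item[$ii),iii)$] In the borderline case $p>1=q$ (resp.\ $p=1<q$) the relation between the components acquires a logarithmic correction (integrating $v_t\ge-\mu u^{-1}$ produces $v\sim|\log(T-t)|^{1/(1-p)}$-type behaviour), and feeding this into the equation for $u$ gives the stated $(T-t)|\log(T-t)|^{-p/(1-p)}$ rate; the algebra is the same ODE computation with a slowly varying factor.
\item[$iv)$] For $p=q=1$ the system essentially decouples logarithmically: $(\log u)'\sim -\lambda v^{-1}$, $(\log v)'\sim-\mu u^{-1}$, and the relation $v\sim u^{\mu/\lambda}$ (from Lemma \ref{lema-estimaciones}, or directly from $\lambda\log v\sim\mu\log u$) turns the first equation into $(\log u)'\sim -\lambda u^{-\mu/\lambda}$, whose integration yields $u\sim(T-t)^{\lambda/(\lambda+\mu)}$.
\end{itemize}
In every case the same estimate holds at $x_v(t)$ because Lemma \ref{lema-estimaciones} gives the analogous two-sided bound there, and one also needs that $u(x_u(t),t)\sim u(x_v(t),t)$ and $v(x_u(t),t)\sim v(x_v(t),t)$, i.e.\ the minima of the two components are comparable up to constants — this too should be a consequence of Lemma \ref{lema-estimaciones} together with $0<u,v\le 1$.

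The main obstacle, and the place where the nonlocal setting differs from the classical one, is controlling the diffusion term at the minimizing point \emph{from above} as well as from below: the inequality $J\ast u-u\ge 0$ at $x_u(t)$ is free, but to get the sharp upper bound on $|m'(t)|$ one needs $J\ast u(x_u(t),t)-u(x_u(t),t)$ to be \emph{negligible} compared with $\lambda v(x_u(t),t)^{-p}$ as $t\to T$, and $J\ast u - u$ is bounded by a constant (at most $1$) while $v^{-p}\to\infty$ only in the quenching cases — so one must verify that the relevant absorption term does blow up, i.e.\ rule out the situation where the diffusion term stays comparable to the absorption term. I expect this to be handled exactly by the dichotomy in Theorem \ref{teo.simultaneo}: when $\max\{p,q\}\ge 1$ the component with the $\ge1$ exponent cannot stay bounded away from zero, so its minimum $\to 0$ and the corresponding $-p$ or $-q$ power of the other component must be non-integrable near $T$, which forces both to quench and pins down the rates. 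Carefully threading this — and making sure the Lipschitz differentiation of $t\mapsto\min_{\overline\Omega}u(\cdot,t)$ is justified (Danskin-type argument, valid since $u\in C^1([0,T):C(\overline\Omega))$) — is the technical heart; the rest is the ODE integration sketched above, done separately in the four parameter regimes.
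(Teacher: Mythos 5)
Your proposal follows essentially the same route as the paper: reduce to an ODE for the minimum via Lemma \ref{lema.minlipschitz}, argue that the absorption term dominates the bounded diffusion term (the paper does this by first showing $u(x_u(t),t),v(x_v(t),t)\to 0$, so that $u_t(x_u(t),t)\sim -v^{-p}(x_u(t),t)$), couple the components through Lemma \ref{lema-estimaciones}, integrate, and transfer the rates from $x_u(t)$ to $x_v(t)$ by the monotonicity $v(x_v(t),t)\le v(x_u(t),t)$ together with the two-sided bound at $x_v$. The only caveats are bookkeeping slips you already flag (the balance is $u^{1-q}\sim v^{1-p}$, giving $m'\sim -m^{-p(q-1)/(p-1)}$, and in case $iv)$ it is $u'$, not $(\log u)'$, that is $\sim -\lambda v^{-1}$), and the borderline cases $p>1=q$, $p=1<q$ need the extra bootstrap $\log u(x_u(t),t)\sim\log(T-t)$ before the final integration, which the paper carries out via an incomplete-Gamma asymptotic.
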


\begin{Theorem} \label{lema.qrate.sim.menor1}
    Let $p,q< 1$ and $(u,v)$ be a solution of \eqref{1.1} that presents quenching at time $T<+\infty$ such that there exists some $t_0\in[0,T)$ for which $x_u(t) = x_v(t) = x(t)$ for every $t\in[t_0,T)$. If the quenching is simultaneous then
      $$
      u(x(t),t)\sim (T-t)^{\frac{p-1}{pq-1}}, \qquad
      v(x(t),t)\sim (T-t)^{\frac{q-1}{pq-1}}.
      $$
\end{Theorem}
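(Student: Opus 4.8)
The plan is to reduce everything to a coupled pair of differential inequalities for the two minima, the point being that the hypothesis $x_u(t)=x_v(t)=x(t)$ on $[t_0,T)$ makes the coupling tractable. Write $m(t):=\min_{\overline\Omega}u(\cdot,t)=u(x(t),t)$ and $n(t):=\min_{\overline\Omega}v(\cdot,t)=v(x(t),t)$, so that at the common point $v(x(t),t)=n(t)$ and $u(x(t),t)=m(t)$. First I would record two structural facts. By the comparison principle (Lemma \ref{lemacomparacion}), comparing with the constant $\max\{\|u_0\|_\infty,\|v_0\|_\infty,1\}$ (the absorption terms have the correct sign), the solution is bounded, $0<u,v\le M$ on $\overline\Omega\times[0,T)$. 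Also the extension of $u$ by $1$ dominates $m(t)$ on all of $\mathbb{R}^N$ (recall $m(t)\to 0<1$ near $T$), so at the common minimum point $0\le (J\ast u-u)(x(t),t)\le M$, and likewise for $v$. Using the standard fact that $m,n$ are locally Lipschitz with $m'(t)=u_t(x(t),t)$, $n'(t)=v_t(x(t),t)$ for a.e.\ $t$, this yields, for a.e.\ $t$ near $T$,
$$ -\lambda n^{-p}\ \le\ m'\ \le\ -\lambda n^{-p}+M,\qquad -\mu m^{-q}\ \le\ n'\ \le\ -\mu m^{-q}+M ;$$
and since $m,n\to 0$ forces $\lambda n^{-p},\mu m^{-q}\to\infty$, in fact $-m'\ge\tfrac12\lambda n^{-p}$ for $t$ close to $T$.

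The second step — which I expect to be the crux — is to prove the proportionality $m(t)^{1-q}\sim n(t)^{1-p}$ without yet knowing any decay rate. Set $\phi:=\frac{\mu}{1-q}m^{1-q}$ and $\psi:=\frac{\lambda}{1-p}n^{1-p}$; these are positive, eventually decreasing, and tend to $0$ as $t\to T$ (this is where simultaneity of the quenching is used, to guarantee that both $m\to 0$ and $n\to 0$). A short computation from the inequalities above gives $|\phi'-\psi'|\le M(\lambda n^{-p}+\mu m^{-q})$ and $-\phi'\ge\tfrac12\lambda\mu\,m^{-q}n^{-p}$. Since $\dfrac{\lambda n^{-p}+\mu m^{-q}}{\lambda\mu m^{-q}n^{-p}}=\dfrac{m^q}{\mu}+\dfrac{n^p}{\lambda}\to0$, given $\varepsilon>0$ we have $\lambda n^{-p}+\mu m^{-q}\le\varepsilon\lambda\mu m^{-q}n^{-p}$ near $T$; integrating $\phi'-\psi'$ over $(t,T)$ and using $\phi(t)=-\int_t^T\phi'$ then gives $|\phi(t)-\psi(t)|\le 2M\varepsilon\,\phi(t)$, so $\psi(t)/\phi(t)\to1$. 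This is precisely $n^{1-p}\sim m^{1-q}$, equivalently $n^{-p}\sim m^{-\beta}$ with $\beta:=\frac{p(1-q)}{1-p}$.

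With this relation in hand the argument closes on $m$ alone. Combining $\tfrac12\lambda n^{-p}\le -m'\le\lambda n^{-p}$ with $n^{-p}\sim m^{-\beta}$ gives $m^{\beta}(-m')\sim1$ near $T$; integrating over $(t,T)$ and using $m(T)=0$ (the map $r\mapsto r^{\beta+1}$ is $C^1$ since $\beta+1=\frac{1-pq}{1-p}>1$ for $q<1$) yields $m(t)^{\beta+1}\sim(T-t)$, that is $m(t)\sim(T-t)^{(1-p)/(1-pq)}=(T-t)^{(p-1)/(pq-1)}$. Feeding this back into $n^{1-p}\sim m^{1-q}$ and simplifying $(1-p)(\beta+1)=1-pq$ gives $n(t)\sim(T-t)^{(1-q)/(1-pq)}=(T-t)^{(q-1)/(pq-1)}$, which is the asserted rate.

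The main obstacle is the coupling exploited in the second step: neither scalar differential inequality can be integrated in isolation, so $m^{1-q}\sim n^{1-p}$ must be extracted before anything is known about power decay. The integral comparison works only because the non-local term $(J\ast u-u)(x(t),t)$ — which breaks the exact ODE heuristic $m'\approx-\lambda n^{-p}$ — is non-negative and bounded by $M$, hence of lower order relative to the blowing-up absorption terms as $t\to T$; keeping careful track of this, and of the almost-everywhere differentiation of the Lipschitz functions $m$ and $n$, is where the technical care is concentrated.
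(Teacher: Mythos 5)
Your strategy is in substance the same as the paper's: at the common minimum point the nonlocal terms $(J\ast u-u)(x(t),t)$, $(J\ast v-v)(x(t),t)$ are trapped in $[0,M]$ and hence are lower order than the blowing-up absorption terms; this yields the key relation $m^{1-q}\sim n^{1-p}$, after which a single ODE for $m$ is integrated. The paper obtains the key relation a bit more directly, by computing $\lim_{t\nearrow T}\frac{v^{p}u_t}{u^{q}v_t}(x(t),t)=\frac{v^{p}(J\ast u-u)-\lambda}{u^{q}(J\ast v-v)-\mu}\to\frac{\lambda}{\mu}$ and integrating $\mu u^{-q}u_t\sim\lambda v^{-p}v_t$ over $(t,T)$; your $\phi$--$\psi$ comparison is an expanded version of the same cancellation (the cross terms $\lambda\mu m^{-q}n^{-p}$ drop out of $\phi'-\psi'$), so nothing essentially new is gained or lost there.

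There is, however, one genuine gap: you assert that simultaneity guarantees $m(t)\to0$ and $n(t)\to0$ as $t\to T$. By the paper's definition, simultaneous quenching only says that each component reaches the zero level, i.e.\ $\liminf_{t\to T}m(t)=\liminf_{t\to T}n(t)=0$; a priori one component could oscillate, vanishing along one sequence of times while staying of order one along another (Theorem \ref{teo.existe} only controls $\min\{m(t),n(t)\}$). Essentially everything in your second step depends on the full limits: that $\phi,\psi$ are eventually decreasing and tend to $0$, that $-m'\ge\tfrac12\lambda n^{-p}$ near $T$ (which needs $\lambda n^{-p}\ge 2M$ for \emph{all} $t$ near $T$, not just along a sequence), and that $m^{q}/\mu+n^{p}/\lambda\to0$. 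The paper closes this with a short contradiction argument that you need to supply: if $\limsup_{t\to T}m(t)=c>0$, then since $\liminf m=0$ the function $m$ oscillates and there are times $t_n\to T$ with $m(t_n)=c/2$ and $m'(t_n)\ge0$; but Theorem \ref{teo.existe} forces $n(t_n)\to0$, whence $m'(t_n)\le M-\lambda n(t_n)^{-p}\to-\infty$, a contradiction. With that paragraph inserted, your proof is complete and matches the paper's.
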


We describe now the contents of the paper. In Section \ref{sec-exist}, we prove Theorem \ref{teo.existe}. As a matter of fact, we will obtain an existence, uniqueness and regularity result of solutions for a problem which is more general than \eqref{1.1}, see Theorem \ref{prop.existe.general}. We also obtain comparison results for this more general class of problems, see Lemma \ref{lemacomparacion}. These two results are interesting by themselves, and they will be applicable in other more general situations.

In Section 3 we prove Theorem \ref{teo.estacionarias}, that is, we will obtain necessary and sufficient conditions for $(\lambda,\mu)$ under which stationary solutions exist for system \eqref{1.1}. We also obtain some auxiliary results on the properties of the stationary solutions of this problem, see Lemma \ref{condicionquenching} and Lemma \ref{lema.globalesmenorque1}.

In Section 4 we prove Theorem \ref{teo.simultaneo}. We will do it first for the case $\max\{p,q\} \geq 1$, in which a functional inequality will give us all the information. Then we will treat the case $p,q\leq 1$, which is more involved and will be solved with the help of a shooting argument.

In Section 5 we first prove Theorem \ref{teo.tasas.nosimultanea}, which gives us the quenching rate for non-simultaneous quenching. Then we prove Theorem \ref{lema.qrate.sim} and Theorem \ref{lema.qrate.sim.menor1}, which consider the quenching rate for simultaneous quenching and are more difficult to prove.

Finally, in Section 6 we provide some numerical simulations that complement the results obtained in the paper.

\end{section}

\begin{section}{Existence, uniqueness, regularity of solutions and comparison results}
\label{sec-exist}

In this section, we will provide a proof of Theorem \ref{teo.existe}, that is, we will prove that \eqref{1.1} admits maximal classical solutions, and they are unique and smooth in the time variable. As a matter of fact, we will consider a more general system of equations and prove the existence of maximal classical solutions for it first.

Let $n\in\mathbb{N}$ and assume the following:
\begin{itemize}
    \item $J_i \in \mathcal{C}(\mathbb{R}^N)$ are non-negative with $\int_{\mathbb{R}^N}J_i (s)ds = C_i < \infty$ for every $i=1,\dots,n$.
    \item $\mathcal{I} = I_1 \times \dots I_n \subset \mathbb{R}^n$, where $I_i = (a_i,b_i)$ with $a_i,b_i \in \mathbb{R} \cup \{ \pm \infty \}$, $a_i<b_i$ for every $i=1,\dots,n$.
    \item $f = (f_i)_{i=1}^n \in \mathcal{C}(\overline\Omega \times (t_0,t_1) \times \mathcal{I}, \mathbb{R}^n)$, with $t_0,t_1 \in \mathbb{R} \cup \{\pm \infty \}$ such that $t_0<t_1$, and $f(x,t,s)$ is locally Lipschitz in the variable $s\in\mathcal{I}$ uniformly with respect to $(x,t) \in \overline\Omega \times (t_0,t_1)$.
    \item $u_0 = (u_0^i)_{i=1}^n \in \mathcal{C}(\overline\Omega, \mathbb{R}^n)$ such that $u_0^i(x) \in I_i$ for every $i=1,\dots,n$ and $x\in\overline\Omega$.
    \item $\tau_0 \in (t_0,t_1)$.
\end{itemize}

Define
\begin{equation*}
    J(x) = \begin{pmatrix}
            J_1 (x) & 0 & 0 & \dots & 0 \\
            0 & J_2(x) & 0 & \dots & 0 \\
            \vdots & \vdots & \ddots & \vdots & \vdots \\
            0 & \dots & 0 & J_{n-1} (x) & 0 \\
            0 & \dots & 0 & 0 & J_n (x)
        \end{pmatrix}, \; \;
    u(x,t) = \begin{pmatrix}
        u_1(x,t) \\
        u_2(x,t) \\
        \vdots \\
        u_{n-1}(x,t) \\
        u_n (x,t)
    \end{pmatrix},
\end{equation*}
and consider the following system of equations:
\begin{equation}\label{auxsistema.general}
    \left\{
        \begin{array}{l}
             \displaystyle u_t(x,t) = \int_\Omega J(x-y) u(y,t) dy + f(x,t,u(x,t)),  \\
             u(x,\tau_0) = u_0 (x),
        \end{array}
    \right.
\end{equation}
where $(x,t)\in\overline{\Omega} \times (t_0,t_1)$.

We say that a function $u\in \mathcal{C}^1((\tau,\tau^\prime), \mathcal{C}(\overline\Omega,\mathbb{R}^n))$ with $(\tau, \tau^\prime) \subset (t_0,t_1)$ and $\tau_0 \in (\tau, \tau^\prime)$ is a classical solution of this system if it satisfies the equations in \eqref{auxsistema.general} pointwise for every $(x,t)\in\overline\Omega\times(\tau,\tau^\prime)$.

\begin{teo} \label{prop.existe.general}
Under the assumptions and notation above, there exists a unique maximal classical solution $u$ of system \eqref{auxsistema.general}, defined in $(T_0,T_1) \subset (t_0,t_1)$ with $T_0<\tau_0<T_1$. Moreover, if $T_1 < t_1$, we have either
$$\limsup_{t\to T_1^-} \left( \sup_{x\in\overline\Omega} \|u(x,t)\| \right) = +\infty$$
or
$$\lim_{t\to T_1^-} \left( \inf_{x\in\overline\Omega} dist(u(x,t), \partial \mathcal{I}) \right) = 0,$$
where $\|s\| = \max_{i=1,\dots,n} |s_i|$ if $s = (s_i)_{i=1}^n \in \mathbb{R}^n$. Similarly, if $T_0>t_0$, we have the same alternative above with $t\to T_0^+$.

Moreover, if $f$ is $\mathcal{C}^k$ in $t$ and $s$, then $u \in \mathcal{C}^{k+1}((T_0,T_1), \mathcal{C}(\overline\Omega,\mathbb{R}^n))$.
\end{teo}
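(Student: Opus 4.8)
The plan is to read \eqref{auxsistema.general} as an abstract ordinary differential equation $u'(t)=F(t,u(t))$, $u(\tau_0)=u_0$, in the Banach space $X=\mathcal{C}(\overline\Omega,\mathbb{R}^n)$ with norm $\|u\|_X=\sup_{x\in\overline\Omega}\|u(x)\|$, and to run the Picard--Lindel\"of scheme. First I would record the two structural facts: for $u\in X$ the map $x\mapsto\int_\Omega J(x-y)u(y)\,dy$ lies in $X$ (continuity in $x$ by continuity of translations in $L^1$ for each $J_i$, and $\|\int_\Omega J(\cdot-y)u(y)\,dy\|_X\le(\max_i C_i)\|u\|_X$), so it defines a bounded linear operator $L$ on $X$; and, since $\overline\Omega$ is compact, the set $\mathcal{O}=\{u\in X:u(x)\in\mathcal{I}\ \text{for all }x\in\overline\Omega\}$ is open in $X$, because for $u\in\mathcal{O}$ the compact set $u(\overline\Omega)$ is disjoint from the closed set $\partial\mathcal{I}$, hence at positive distance $2\rho$ from it, so the $\rho$-ball around $u$ stays in $\mathcal{O}$; note $u_0\in\mathcal{O}$. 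On $(t_0,t_1)\times\mathcal{O}$ the map $x\mapsto f(x,t,u(x))$ lies in $X$ by continuity of $f$, so $F(t,u):=Lu+f(\cdot,t,u(\cdot))$ is a well-defined continuous map $(t_0,t_1)\times\mathcal{O}\to X$ which is locally Lipschitz in $u$ uniformly on compact $t$-subintervals: $L$ is globally Lipschitz, and the nonlinear part inherits the Lipschitz bound of $f$ on the compact set $\overline{\bigcup_{x}B_\rho(u_0(x))}\subset\mathcal{I}$ given by the uniform-in-$(x,t)$ hypothesis.

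Next I would apply the standard contraction argument on $\mathcal{C}([\tau_0-\varepsilon,\tau_0+\varepsilon],\overline{B}_\rho(u_0))$ to the integral equation $u(t)=u_0+\int_{\tau_0}^t F(s,u(s))\,ds$: for $\varepsilon$ small this yields a unique local solution, and any solution of the integral equation is automatically $\mathcal{C}^1$ in $t$ into $X$ (and hence satisfies \eqref{auxsistema.general} pointwise in $x$) because $s\mapsto F(s,u(s))$ is continuous into $X$. Local uniqueness together with connectedness shows that any two solutions through $(\tau_0,u_0)$ agree on the intersection of their intervals, so the union of all such solutions is the unique maximal solution, defined on an open interval $(T_0,T_1)\subset(t_0,t_1)$ with $T_0<\tau_0<T_1$.

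For the blow-up alternative I would argue by contradiction: suppose $T_1<t_1$ but both displayed alternatives fail at $T_1$, so there are $M,\delta>0$ and $t_*<T_1$ with $\|u(t)\|_X\le M$ and $\inf_x\mathrm{dist}(u(x,t),\partial\mathcal{I})\ge\delta$ for all $t\in[t_*,T_1)$. Then $u(t)$ stays in $K_0=\{v\in X:\|v\|_X\le M,\ \inf_x\mathrm{dist}(v(x),\partial\mathcal{I})\ge\delta\}$, and $F$ is bounded on $[t_*,T_1]\times K_0$ since the admissible $s$-values there form a compact subset of $\mathcal{I}$ on which $f$ is bounded. Hence $\|u(t)-u(s)\|_X\le N|t-s|$ on $[t_*,T_1)$, so $u(t)\to u^*$ in $X$ as $t\to T_1^-$ with $u^*\in\mathcal{O}$; applying the local existence result at $(T_1,u^*)$ and gluing — the glued function is $\mathcal{C}^1$, both one-sided $t$-derivatives at $T_1$ being $F(T_1,u^*)$ — extends the solution past $T_1$, contradicting maximality. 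The endpoint $T_0$ is handled identically with time reversed.

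Finally, for the regularity claim I would bootstrap. We already have $u\in\mathcal{C}^1$; assume $f\in\mathcal{C}^k$ in $(t,s)$ and, inductively, $u\in\mathcal{C}^j$ with $1\le j\le k$. Viewed as a map $(T_0,T_1)\to X$, the right-hand side of \eqref{auxsistema.general} is $Lu(t)$, which is $\mathcal{C}^j$ because $L$ is bounded linear, plus $t\mapsto f(\cdot,t,u(\cdot,t))$; differentiating the latter $j$ times by the chain rule produces finite sums of products of partial derivatives of $f$ of order $\le k$, evaluated along $(t,u(x,t))$, times partial derivatives of $u$ of order $\le j$, and all of these are continuous with the corresponding difference quotients converging in the sup-norm in $x$ because the partial derivatives of $f$ are uniformly continuous on the compact sets swept by $(x,t,u(x,t))$ over compact subsets of $\overline\Omega\times(T_0,T_1)$. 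Thus $u_t\in\mathcal{C}^j$, i.e. $u\in\mathcal{C}^{j+1}$, and the induction gives $u\in\mathcal{C}^{k+1}$. (Theorem \ref{teo.existe} then follows by taking $n=2$, $J_1=J_2=J$, $\mathcal{I}=(0,\infty)^2$, and the obvious $f$, noting that $u,v$ remain bounded above so that only the distance-to-$\partial\mathcal{I}$ alternative can occur, which is precisely \eqref{eq-m}.) There is no single deep step here; the points that need genuine care are the boundedness of $F$ on $[t_*,T_1]\times K_0$ in the continuation step and, above all, the uniform-in-$x$ (not merely pointwise) convergence of the difference quotients in the regularity bootstrap.
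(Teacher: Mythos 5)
Your overall route is the same as the paper's: local existence by a contraction/Picard argument for the integral equation in $\mathcal{C}(\overline\Omega,\mathbb{R}^n)$, extension to a maximal interval, and a bootstrap for the higher regularity. Those parts are fine. There is, however, one genuine gap, in the continuation step.

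You negate the second alternative as ``there are $\delta>0$ and $t_*<T_1$ with $\inf_x \mathrm{dist}(u(x,t),\partial\mathcal{I})\ge\delta$ for \emph{all} $t\in[t_*,T_1)$.'' That is the negation of $\liminf_{t\to T_1^-}\inf_x \mathrm{dist}(u(x,t),\partial\mathcal{I})=0$, not of $\lim_{t\to T_1^-}\inf_x \mathrm{dist}(u(x,t),\partial\mathcal{I})=0$. The correct negation of the theorem's second alternative only produces a \emph{sequence} $s_n\to T_1^-$ and $a>0$ with $\inf_x\mathrm{dist}(u(x,s_n),\partial\mathcal{I})\ge a$; it leaves open the possibility that the distance oscillates, with $\liminf=0<\limsup$. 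In that scenario your set $K_0$ does not contain the whole trajectory on $[t_*,T_1)$, so the Lipschitz bound and the convergence $u(t)\to u^*\in\mathcal{O}$ do not follow, and your contradiction argument only establishes the weaker dichotomy with $\liminf$ in place of $\lim$. This weakening is not harmless for the paper: Theorem \ref{teo.existe} (hence \eqref{eq-m}) is quoted later precisely as a full limit, e.g.\ in Lemma \ref{todoacero} and in the proof of Theorem \ref{teo.tasas.nosimultanea}.

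The repair is what the paper does: keep only the sequence $s_n$ together with the uniform bound $M$ on $[\overline t,T_1)$, and observe that the local existence time produced by the contraction argument with initial datum $u(\cdot,s_n)$ depends only on $M$, on the distance $a$ of the datum to $\partial\mathcal{I}$, and on $\min\{t_1-T_1,\overline t-t_0\}$ --- hence is bounded below by some $h_a>0$ independent of $n$. For $n$ large, $s_n+h_a>T_1$, contradicting maximality. With this correction the rest of your write-up (openness of $\mathcal{O}$, boundedness of $F$ on the relevant compacts, gluing of the one-sided derivatives, and the $\mathcal{C}^{k+1}$ bootstrap with uniform-in-$x$ convergence of difference quotients) goes through as you describe.
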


\begin{proof}[Proof]
We will start by proving existence and uniqueness of local solutions, that is, solutions defined in a short interval of time $[\tau_0-h,\tau_0+h]$. To accomplish this, we will use the Banach fixed point Theorem. We know that, for every $i=1,2,\dots,n$, $u_0^i$ is a continuous function in $\overline\Omega$, which is a compact subset of $\mathbb{R}^N$. Therefore, there exist some $\overline{a_i},\overline{b_i} \in \mathbb{R}$ with $a_i<\overline{a_i}<\overline{b_i}<b_i$, such that $u_0^i(x) \in [\overline{a_i}, \overline{b_i}]$ for every $x\in\overline{\Omega}$. Then take $0 < \delta_i < \frac{1}{2} \min\{|a_i - \overline{a_i}|,|b_i-\overline{b_i}|\}$ and $\delta = \min_i \{\delta_i\}_{i=1}^n$. Define the compact subset $M_\delta = \Pi_{i=1}^n [\overline{a_i}-\delta,\overline{b_i}+\delta] \subset \mathcal{I}$. Next, take $R$ such that $[\tau_0 - R ,\tau_0 + R] \subset (t_0,t_1)$ and, since $f(x,t,s)$ is locally Lipschitz in $s \in \mathcal{I}$, take $L_\delta$ as the Lipschitz constant for $f$ in the closed bounded set $\overline\Omega \times [\tau_0-R, \tau_0+R] \times M_\delta$, that is,
\begin{equation*}
    \|f(x,t,s_1) - f(x,t,s_2)\| \leq L_\delta \|s_1 - s_2\|,
\end{equation*}
for every $(x,t)\in\overline\Omega \times [\tau_0-R, \tau_0+R]$ and $s_1, s_2 \in M_\delta$. Furthermore, define the following quantities:
\begin{equation*}
    \alpha = \max_{i=1,\dots,n} \{|\overline{a_i}-\delta|, |\overline{b_i}+\delta| \}, \;\;\; \beta = \sup_{(x,t,s)\in \overline\Omega \times [\tau_0-R, \tau_0+R] \times M_\delta} \|f(x,t,s)\|, \;\;\; C = \max_{i=1,\dots,n} C_i.
\end{equation*}
Next, take $h>0$ such that it satisfies
\begin{equation*}
    h < \min \left\{ R,  \frac{\delta}{\alpha C + \beta}, \frac{1}{C+L_\delta} \right\}
\end{equation*}

and consider the following closed subset of the Banach space $\mathcal{C}(\overline\Omega \times [\tau_0 - h, \tau_0 + h], \mathbb{R}^n)$ endowed with the usual norm:
\begin{equation*}
    X = \{ \varphi \in  \mathcal{C}(\overline\Omega \times [\tau_0 - h, \tau_0 + h], \mathbb{R}^n) : \|\varphi(\cdot,t) - u_0 \|_\infty \leq \delta, \, \forall t\in [\tau_0 - h, \tau_0 + h] \},
\end{equation*}
where $\|v\|_\infty = \sup_{\overline\Omega} \|v (x)\|$ for every $v\in \mathcal{C}(\overline\Omega, \mathbb{R}^n)$.

Notice that $h$ only depends on the integral of $J$, on some bounds of $f$ and, more importantly, on $R>0$ and $\delta>0$. $R$ depends on the distance of $\tau_0$ to $t_0$ and $t_1$, and $\delta$ arises from the distance of the initial data to the boundary of the set $\mathcal{I}$. This is a fact that will come into play later.

We introduce the nonlinear operator $\Phi: X \longrightarrow \mathcal{C}(\overline\Omega \times [\tau_0-h, \tau_0+h], \mathbb{R}^n)$ defined as
\begin{equation*}
    \Phi(u) (x,t) = u_0 (x) +  \int_{\tau_0}^t \left(  \int_\Omega J (x-y) u(y,s)\,dy \right) ds + \int_{\tau_0}^t f(x,s,u(x,s)) ds = u_0 + \theta_1 + \theta_2,
\end{equation*}
where $x\in\overline\Omega$ and $t\in[\tau_0-h,\tau_0+h]$.

First, we show that $\Phi$ maps $X$ into $X$. Notice that, if $u\in X$, $u(x,t) \in M_\delta \subset \mathcal{I}$ for every $(x,t) \in \overline\Omega \times [\tau_0 - h, \tau_0 + h]$. Then $\theta_2$ is well-defined and continuous in $\overline\Omega \times [\tau_0 - h, \tau_0 + h]$. Moreover, as $u(x,t)$ is bounded and $\int_{\mathbb{R}^N} J_i(s) ds = C_i$ for each $i=1,2,\dots,n$, then $\theta_1$ is also well-defined and continuous in $\overline\Omega \times [\tau_0 - h, \tau_0 + h]$.

Next, thanks to the definition of $h$,
\begin{equation*}
\begin{array}{ll}
    \|\Phi (u) (x,t) - u_0(x) \| &\leq  \Big\| \displaystyle\int_{\tau_0}^t \left(  \int_\Omega J (x -y) u(y,s)\,dy \right) ds \Big\| + \Big\| \int_{\tau_0}^t f(x,s,u(x,s)) ds \Big\| \\
    &\leq \alpha \cdot h \cdot \displaystyle\max_{i=1,\dots,n} C_i + \beta \cdot h = h(\alpha C + \beta) \leq \delta,
    \end{array}
\end{equation*}
for every $x\in\overline\Omega$ and $t\in[\tau_0-h,\tau_0+h]$. Therefore, $\Phi$ maps $X$ into $X$.

On the other hand, we consider $u,v \in X$ and $(x,t) \in \overline\Omega \times [\tau_0-h, \tau_0+h]$ and study the quantity
\begin{equation*}
\begin{array}{ll}
    \| \Phi (u)(x,t) - \Phi(v)(x,t) \| &\leq \Big\| \displaystyle\int_{\tau_0}^t \left(  \int_\Omega J (x -y) (u(y,s)-v(y,s))\,dy \right) ds \Big\| \\
    &+ \Big\| \displaystyle\int_{\tau_0}^t (f(x,s,u(x,s))-f(x,s,v(x,s))) ds \Big\| \\
    & \displaystyle \leq (C \cdot h + L_\delta \cdot h) \sup_{t \in [\tau_0-h,\tau_0+h]} \|u(\cdot,t)-v(\cdot,t)\|_\infty.
\end{array}
\end{equation*}
Therefore we find that
\begin{equation*}
    \sup_{t \in [\tau_0-h,\tau_0+h]} \| \Phi (u)(\cdot,t) - \Phi(v)(\cdot,t) \|_\infty \leq h(C+L_\delta) \sup_{t \in [\tau_0-h,\tau_0+h]} \|u(\cdot,t)-v(\cdot,t)\|_\infty,
\end{equation*}
which is a strict contraction because $h(C+L_\delta)<1$ thanks to the definition of $h$.

Then, by Banach's fixed point theorem, there exists a unique $w \in X$ such that $w = \Phi(w)$. Hence we have
\begin{equation} \label{eq.puntofijo}
    w(x,t) = u_0 (x) +  \int_{\tau_0}^t \left(  \int_\Omega J (x-y) w(y,s)\,dy \right) ds + \int_{\tau_0}^t f(x,s,w(x,s)) ds,
\end{equation}
for every $(x,t) \in \overline\Omega \times [\tau_0-h,\tau_0+h]$.

We notice that the integrands in the right hand side of \eqref{eq.puntofijo} are continuous in time because $w\in\mathcal{C}(\overline\Omega \times [\tau_0-h, \tau_0+h], \mathbb{R}^n)$ and $w(x,t) \in M_\delta \subset \mathcal{I}$, for every $(x,t)\in\overline\Omega \times [\tau_0-h,\tau_0+h]$. Then by the Fundamental Theorem of Calculus, every term in the right hand side is $\mathcal{C}^1$ in time. This means that $w\in \mathcal{C}^1([\tau_0-h, \tau_0+h], \mathcal{C}(\overline\Omega,\mathbb{R}^n))$. Differentiating expression \eqref{eq.puntofijo} in time, we see that $w$ fulfills the first equation of \eqref{auxsistema.general} pointwise. Moreover, $w$ fulfills $w(x,\tau_0) = u_0(x)$ for every $x\in\overline\Omega$ from \eqref{eq.puntofijo}, so we conclude that $w$ is a classical local solution of \eqref{auxsistema.general}. 

To obtain now solutions defined in a maximal interval of time $(T_0,T_1)$, we proceed with a standard argument. As long as $w(\cdot, \tau_0 + h) \in \mathcal{I}$ (or $w(\cdot, \tau_0 - h) \in \mathcal{I}$), we will be able to repeat the previous procedure, choosing appropriate $\delta^\prime$ and $h^\prime$, and we can extend the solution to $[\tau_0+h, \tau_0+h+h^\prime]$. Therefore we can obtain a solution $w$ defined in a maximal interval of existence, $(T_0,T_1)$. Let us show that this interval satisfies that, if $T_1<t_1$, either $\limsup_{t\nearrow T_1} \sup_{x\in\overline\Omega} \|w(x,t)\| = +\infty$ or $\lim_{t\nearrow T_1} \inf_{x\in\overline\Omega} dist(w(x,t), \partial \mathcal{I}) = 0$. Indeed, suppose that neither happens. That means that, on one hand, there is a $\overline{t}<T_1$ and $0<D<\infty$ such that $\sup_{x\in\overline\Omega} \|w(x,t)\| \leq D$ for every $t\in[\overline{t},T_1)$. On the other hand, there exists a sequence of times $\{s_n\}_n$ that satisfies $s_n \xrightarrow{n} T_1^-$ and a real number $a>0$ such that $\inf_{x\in\overline\Omega} dist(w(x,s_n), \partial \mathcal{I}) \geq a$ for every $n\in\mathbb{N}$. Take $n_0\in\mathbb{N}$ such that $s_n \geq \overline{t}$ for every $n\geq n_0$. Then $w(\cdot, s_n) \in \mathcal{I}$ for every $n\geq n_0$, and we can extend the solution for these times according to the previous procedure. Each $R_n$ depends on the distance of $s_n$ to $t_0$ and $t_1$. Since $T_1$ satisfies $t_0<\overline{t}<T_1<t_1$ and $s_n \in [\overline{t}, T_1)$ for every $n\geq n_0$, we can take $\overline{R} < \min\{t_1-T_1, \overline{t}-t_0\}$ and choose $R_n =\overline{R}$ for every $n\geq n_0$. Each $\delta_n$ depends on the distance of the initial data, in this case $w(\cdot, s_n)$, to the boundary of $\mathcal{I}$. In this case, this distance is bounded by $a$. Then we can take $\delta_a >0$ such that $\delta_n = \delta_a$ for every $n \geq n_0$. Since each $h_n$ only depends on $\overline{R}$ and $\delta_a$,  we can also take $h_a>0$ such that $h_n=h_a$ for every $n \geq n_0$, and then extend the solution to intervals $[s_n, s_n + h_a]$ for every $n\geq n_0$. Taking $n$ large enough, this would mean that our solution $w$ is defined in an interval $(T_0,T_1 + \frac{h_a}{2})$, which is a contradiction. The proof in the case $T_0>t_0$ is analogous.

Finally, we take $u$ as a solution of \eqref{auxsistema.general} and observe that, as $u(x,t) \in \mathcal{I}$ in $\overline\Omega \times (T_0, T_1)$, all the terms in the right hand side of the first equation are continuous in $\overline\Omega \times (T_0,T_1)$. Then $u_t$ is also continuous in $\overline\Omega \times (T_0,T_1)$ and therefore $u \in \mathcal{C}^1((T_0,T_1), \mathcal{C}(\overline\Omega,\mathbb{R}^n))$. If $f$ is $\mathcal{C}^k$ in $t$ and $s$, iterating this same argument we get that $u \in \mathcal{C}^{k+1}((T_0,T_1), \mathcal{C}(\overline\Omega,\mathbb{R}^n))$.
\end{proof}

Next, we will prove comparison results for system \eqref{auxsistema.general}. For this, we need some extra conditions on the nonlinearity $f$.

Assume that $f$ can be written as $f_i (x,t,s) = h_i (x,t) s_i + g_i (x,t,s)$  for every $(x,t,s) \in \overline\Omega \times (t_0,t_1) \times \mathcal{I}$ and $i=1,\dots,n$, with $h_i \in \mathcal{C}(\overline\Omega \times (t_0,t_1))$ and $g_i \in \mathcal{C}(\overline\Omega \times (t_0,t_1) \times \mathcal{I})$. Define

\begin{equation*}
    h(x,t) = \begin{pmatrix}
            h_1 (x,t) & 0 & 0 & \dots & 0 \\
            0 & h_2(x,t) & 0 & \dots & 0 \\
            \vdots & \vdots & \ddots & \vdots & \vdots \\
            0 & \dots & 0 & h_{n-1} (x,t) & 0 \\
            0 & \dots & 0 & 0 & h_n (x,t)
        \end{pmatrix}, \; \;
    g(x,t,s) = \begin{pmatrix}
        g_1(x,t,s) \\
        g_2(x,t,s) \\
        \vdots \\
        g_{n-1}(x,t,s) \\
        g_n (x,t,s)
    \end{pmatrix}.
\end{equation*}

This way it is clear that $f(x,t,s) = h(x,t)s + g(x,t,s)$, and so system \eqref{auxsistema.general} transforms into
\begin{equation}\label{auxsistema.comparacion}
    \left\{
        \begin{array}{l}
             \displaystyle u_t(x,t) = \int_\Omega J(x-y) u(y,t) dy + h(x,t)u(x,t) + g(x,t,u(x,t)),  \\
             u(x,\tau_0) = u_0 (x),
        \end{array}
    \right.
\end{equation}
with $(u_0) = (u_0^i)_{i=1}^n \in \mathcal{C}(\overline\Omega, \mathbb{R}^n)$.

\begin{defi}  \label{defsupersolucion}
    We say that, given $\tau_0 \in (t_0,t_1)$ and $\tau \in (\tau_0,t_1]$, a function $\overline{u} = (\overline{u}^i)_{i=1}^n \in \mathcal{C}^1([\tau_0,\tau), \mathcal{C}(\overline{\Omega}, \mathbb{R}^n))$ is a supersolution of \eqref{auxsistema.comparacion} in $[\tau_0,\tau)$ if $\overline{u}(x,t)\in \mathcal{I}$ for every $(x,t)\in\overline\Omega \times [\tau_0,\tau)$, and it satisfies
    \begin{equation*}
    \left\{
        \begin{array}{lr}
             \displaystyle \overline{u}^i_t(x,t) \geq \int_\Omega J_i(x-y) \overline{u}^i (y,t) dy + h_i(x,t)\overline{u}^i (x,t) + g_i(x,t,\overline{u}(x,t)), \;\;\; &i=1,\dots,n, \\
             \overline{u}^i(x,\tau_0) \geq u_0^i (x), &i=1,\dots,n,
        \end{array}
    \right.
    \end{equation*}
    for every $(x,t)\in\overline\Omega \times [\tau_0,\tau)$ and $i=1,\dots,n$. Conversely, $\underline{u} = (\underline{u}^i)_{i=1}^n \in \mathcal{C}^1([\tau_0,\tau), \mathcal{C}(\overline{\Omega}, \mathbb{R}^n))$ is a subsolution of \eqref{auxsistema.comparacion} in $[\tau_0,\tau)$ if $\underline{u}(x,t)\in \mathcal{I}$ and $\underline{u}$ fulfills the reverse inequalities for every $(x,t)\in\overline\Omega \times [\tau_0,\tau)$.
\end{defi}

We can prove the following comparison result for system \eqref{auxsistema.comparacion}:

\begin{lema} \label{lemacomparacion}
    Let $\overline{u}$ be a supersolution of \eqref{auxsistema.comparacion} in $[\tau_0,\overline{\tau})$ and $\underline{u}$ a subsolution of \eqref{auxsistema.comparacion} in $[\tau_0,\underline{\tau})$, and take $\tau = \min\{\overline\tau, \underline\tau\}$. If we assume that $g$ is $\mathcal{C}^1$ with respect to $s$ and
    \begin{equation} \label{kamke.condition}
        \frac{\partial g_i}{\partial u^j} (x,t,s) \geq 0, \;\;\; \text{for every } \; i \neq j \text{ and } (x,t,s) \in \overline\Omega \times [\tau_0,\tau) \times \mathcal{I},
    \end{equation}
    then $\overline{u}^i (x,t) \geq \underline{u}^i (x,t)$ for every $i=1,\dots, n$, $(x,t)\in\overline\Omega \times [\tau_0,\tau)$.
\end{lema}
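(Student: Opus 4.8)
The plan is to prove Lemma \ref{lemacomparacion} by a standard comparison/maximum-principle argument adapted to the non-local setting, using the Kamke-type condition \eqref{kamke.condition} to handle the coupling. Set $w^i = \overline{u}^i - \underline{u}^i$ for $i=1,\dots,n$. Each $w^i$ belongs to $\mathcal{C}^1([\tau_0,\tau),\mathcal{C}(\overline\Omega,\mathbb{R}))$, and subtracting the differential inequalities for the supersolution and the subsolution gives, pointwise on $\overline\Omega\times[\tau_0,\tau)$,
\begin{equation*}
w^i_t(x,t) \geq \int_\Omega J_i(x-y) w^i(y,t)\,dy + h_i(x,t) w^i(x,t) + \bigl(g_i(x,t,\overline{u}(x,t)) - g_i(x,t,\underline{u}(x,t))\bigr),
\end{equation*}
with $w^i(x,\tau_0)\geq 0$. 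The difference of the $g_i$ terms is the obstacle: I would use that $g$ is $\mathcal{C}^1$ in $s$ and write it, via the fundamental theorem of calculus along the segment joining $\underline{u}(x,t)$ and $\overline{u}(x,t)$ (which lies in $\mathcal{I}$ since $\mathcal{I}$ is a box and hence convex), as $\sum_{j=1}^n c_{ij}(x,t)\, w^j(x,t)$ where $c_{ij}(x,t) = \int_0^1 \frac{\partial g_i}{\partial u^j}(x,t,\underline{u}(x,t)+\theta(\overline{u}(x,t)-\underline{u}(x,t)))\,d\theta$. By \eqref{kamke.condition}, $c_{ij}(x,t)\geq 0$ for $i\neq j$; there is no sign assumption on $c_{ii}$, but that is harmless since the diagonal terms can be absorbed into the linear coefficient $h_i$.

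The core of the argument is then a Gronwall-type estimate. Fix any $\tau'<\tau$; on the compact set $\overline\Omega\times[\tau_0,\tau']$ the coefficients $h_i$, $c_{ij}$, and $C_i=\int_{\mathbb{R}^N}J_i$ are all bounded, say by a common constant $K$. Introduce $W(t) = \max_{i}\max_{x\in\overline\Omega}(-w^i(x,t))_+$ — the size of the worst negative excursion — and the regularized quantity $W_\varepsilon(t) = \max_i\max_x(-w^i(x,t)) + \varepsilon e^{2Kt}$ for small $\varepsilon>0$, or alternatively work directly with $z^i = w^i e^{-\lambda t}$ for $\lambda$ large and argue at a first time and point where $\min_i\min_x z^i$ touches $0$ from above. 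At such a point $(x_0,t_0)$ with $t_0>\tau_0$ and index $i_0$, one has $z^{i_0}(x_0,t_0)=0$, $z^{i_0}_t(x_0,t_0)\leq 0$, while $\int_\Omega J_{i_0}(x_0-y)z^{i_0}(y,t_0)\,dy \geq 0$ (since $z^{i_0}(\cdot,t_0)\geq 0$ everywhere and $J_{i_0}\geq 0$), the term $(h_{i_0}+c_{i_0 i_0}-\lambda)z^{i_0}(x_0,t_0)=0$, and $\sum_{j\neq i_0}c_{i_0 j}z^j(x_0,t_0)\geq 0$ by the sign condition and $z^j\geq 0$. Plugging into the differential inequality for $z^{i_0}$ forces $z^{i_0}_t(x_0,t_0)\geq 0$, and a careful version of this (strictly, using the $\varepsilon$-perturbation so that the touching is genuinely first and the inequality becomes strict) yields a contradiction, establishing $w^i\geq 0$ on $\overline\Omega\times[\tau_0,\tau']$; letting $\tau'\nearrow\tau$ finishes.

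I expect the main obstacle to be making the "first touching time" argument rigorous in the non-local, vector-valued setting: one must choose the index $i_0$ and point $x_0$ correctly (the max over both $i$ and $x\in\overline\Omega$ is attained by compactness and continuity), ensure the time derivative inequality at the touching time is used with the correct sign, and handle the fact that the diagonal coefficients $c_{ii}$ have no sign — this is why the exponential shift $e^{-\lambda t}$ (or the additive $\varepsilon e^{2Kt}$ cushion) is essential, to turn a non-strict inequality into a strict one and preclude the degenerate touching. The non-local term actually helps rather than hinders: unlike a Laplacian it needs no regularity of $w^i$ in $x$ and contributes a manifestly nonnegative amount at an interior minimum in the relevant variable. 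Everything else — convexity of $\mathcal{I}$, boundedness of coefficients on compacta, the elementary Gronwall bookkeeping — is routine.
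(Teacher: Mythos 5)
Your proposal is correct and follows essentially the same strategy as the paper's proof: an additive perturbation $\varepsilon e^{\gamma t}$ (with $\gamma$ chosen to dominate the coefficients on a compact time slab), a first-touching-time contradiction in which the non-local term and the off-diagonal Kamke terms are nonnegative, followed by letting $\varepsilon\to 0$ and exhausting $[\tau_0,\tau)$ by compact subintervals. The only cosmetic difference is that the paper linearizes the difference of the $g_i$ via the Mean Value Theorem rather than the integral form of the fundamental theorem of calculus along the segment, which is an equivalent device.
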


\begin{proof}
   First, we choose $\tau_1 \in [\tau_0, \tau)$ and take
   \begin{equation*}
       m =\min_{i=1,\dots,n} \left\{\min_{(x,t) \in \overline\Omega \times [\tau_0,\tau_1]} \underline{u}^i(x,t) \right\}, \;\;\; M = \max_{i=1,\dots,n} \left\{\max_{(x,t) \in \overline\Omega \times [\tau_0,\tau_1]} \overline{u}^i(x,t) \right\}.
   \end{equation*}
   This is possible because $\underline{u}^i$ and $\overline{u}^i$ are continuous functions and $\overline\Omega \times [\tau_0,\tau_1]$ is a compact set. Then take $\gamma>0$ such that
   \begin{equation} \label{definiciongamma}
       \gamma > \max_{i=1,\dots,n} \left\{ \int_\Omega J_i(x-y) dy - h_i(x,t) + \sum_{j=1}^n \frac{\partial g_i}{\partial u^j}(x,t,s)\right\}
   \end{equation}
   for every $(x,t,s) \in \overline\Omega \times [\tau_0,\tau_1] \times [m,M]^n$. This is possible because $J_i$, $h_i$, $\frac{\partial g_i}{\partial u^j}$ are continuous functions and $\overline\Omega \times [\tau_0,\tau_1] \times [m,M]^n$ is a compact set so they are bounded in it. Let us define, for every $i=1,\dots,n$, $x\in\overline\Omega$ and $t\in[\tau_0,\tau_1]$,
$$
z^i (x,t)=\overline u^i  (x,t)-\underline u^i (x,t)+\varepsilon e^{\gamma t}.
$$
Notice that $z^i (x,\tau_0)>0$ for every $i=1,\dots,n$ and $x\in\overline\Omega$. Therefore, by continuity, $z^i (x,t)>0$ in $\overline\Omega \times [\tau_0,\tau_0+\delta)$ for some small $\delta\in [0,\tau_1-\tau_0]$. Let us show that $z^i(x,t)>0$ in $\overline\Omega \times [\tau_0,\tau_1]$ for every $i=1,\dots,n$. In other case, there exists a first time $\overline{t} \in (\tau_0,\tau_1]$ such that $z^i(x,t)>0$ in $(x,t) \in \overline\Omega \times [\tau_0,\overline{t})$ for every $i=1,\dots,n$, and $\min_{i=1,\dots,n} \{ \min_{x\in\overline{\Omega}} z^i (x,\overline{t})\} = 0$. There exists $\overline{x} \in \overline\Omega$ and $k \in \{1,\dots,n\}$ such that $z^k(\overline{x},\overline{t})=0$

From the inequalities satisfied by $\overline{u}_t$ and $\underline{u}_t$, it is easily seen that, for every $x\in\overline\Omega$ and $t\in[\tau_0,\tau_1]$,
\begin{align*}
    z^k_t(x,t) \geq& \int_\Omega J_k (x-y) (z^k(y,t)-\varepsilon e^{\gamma t}) dy - h_k(x,t) (z^k(x,t) -\varepsilon e^{\gamma t}) \\
    &+g_k (x,t, \overline{u}) - g_k (x,t, \underline{u}) + \varepsilon \gamma e^{\gamma t},
\end{align*}
where we have used that $\overline{u}^k (x,t)-\underline{u}^k (x,t) = z^k (x,t) - \varepsilon e^{\gamma t}$. Reordering terms and applying the Mean Value Theorem on the last argument of $g_k$ we get, in $\overline\Omega \times [\tau_0,\tau_1]$,
\begin{align*}
    z^k_t(x,t) \geq& \int_\Omega J_k (x-y) z^k(y,t) dy - h_k(x,t)z^k(x,t) + \sum_{j=1}^n \frac{\partial g_k}{\partial u^j}(x,t,\xi) z^j (x,t) \\
    &  + \varepsilon e^{\gamma t} \left(\gamma - \int_\Omega J_k (x-y) dy + h_k(x,t) - \sum_{j=1}^n \frac{\partial g_k}{\partial u^j}(x,t,\xi) \right),
\end{align*}
where $\xi = (\xi(x,t))_{i=1}^n$ and $\overline{u}^i (x,t)\geq \xi^i(x,t) \geq \underline{u}^i(x,t)$, for every $i=1,\dots,n$, $x\in\overline\Omega$ and $t\in[\tau_0,\tau_1]$. Note that $\xi(x,t) \in [m,M]^n \subset \mathcal{I}$ for every $(x,t)\in \overline\Omega \times [\tau_0,\tau_1]$ because $\overline{u}(x,t), \underline{u}(x,t) \in [m,M]^n$ for every $(x,t)\in \overline\Omega \times [\tau_0,\tau_1]$.

Finally, we evaluate the expression in $(\overline{x},\overline{t})$ to use the fact that $z^k (\overline{x},\overline{t})=0$ and condition \eqref{definiciongamma} to obtain:
\begin{equation*}
    z^k_t(\overline{x}, \overline{t}) > \int_\Omega J_k (\overline{x}-y) z^k (y,\overline{t}) dy + \sum_{j=1}^{k-1} \frac{\partial g_k}{\partial u^j}(x,t,\xi) z^j (\overline{x},\overline{t}) + \sum_{j=k+1}^n \frac{\partial g_k}{\partial u^j}(x,t,\xi) z^j (\overline{x},\overline{t}) \geq 0,
\end{equation*}
where we have used the positivity of $J_k$ and $z^k$ and \eqref{kamke.condition} to get the last inequality. However, since $\overline{t} \in (\tau_0,\tau_1]$ is the first time in which $z^k$ vanishes, it is clear that $z^k_t(\overline{x},\overline{t}) \leq 0$, so we have reached a contradiction. Therefore, $z^i(x,t) >0$ for every $i=1,\dots,n$; $x\in\overline{\Omega}$ and $t\in[\tau_0,\tau_1]$. Taking the limit $\varepsilon\rightarrow 0$, we obtain that $\overline{u}^i(x,t) \geq \underline{u}^i(x,t)$ for every $i=1,\dots,n$ in $\overline\Omega \times [\tau_0,\tau_1]$. Since this is true for every $\tau_1 \in [\tau_0,\tau)$, the result is proven.
\end{proof}

\begin{rem} \label{rem.kamke}
    Condition \eqref{kamke.condition} is sometimes known as the Kamke condition, as seen in \cite{S}.
\end{rem}

We are now in a position to provide a proof of Theorem \ref{teo.existe}.

\textit{Proof of Theorem \ref{teo.existe}}. Let us observe that system \eqref{1.1} is a particular case of \eqref{auxsistema.comparacion} with $n=2$, $I_1 = I_2 = (0,\infty)$, $J_1(x) = J_2(x)= J(x)$, $h_1(x,t) = h_2(x,t) = -1$, $g_1(x,t,u,v) = \int_{\mathbb{R}^N \backslash \Omega} J(x-y) dy -\lambda v^{-p}$, and $g_2(x,t,u,v)= \int_{\mathbb{R}^N \backslash \Omega} J(x-y) dy -\mu u^{-q}$ for every $x\in\overline\Omega$, $t\in[\tau_0,\tau)$ and $u,v\in(0,\infty)$. Also, since $g_1,g_2$ are $\mathcal{C}^\infty$ in $u,v \in (0,\infty)$, then from Theorem \ref{prop.existe.general} the solution is also $\mathcal{C}^\infty$.

Hence, if $u_0, v_0$ are strictly positive and bounded continuous functions we are in the hypotheses of Theorem \ref{prop.existe.general} and Lemma \ref{lemacomparacion}. Then, there exists a local in time classical solution and our comparison principle holds. Moreover, if $T<\infty$ we have either
$$
\limsup_{t\to T}\left( \max\left\{\max_{x\in\overline\Omega} u(x,t), \max_{x\in\overline\Omega} v(x,t)\right\}\right)  = +\infty
$$ 
or
$$
\liminf_{t\to T} \left( \min\left\{\min_{x\in\overline\Omega} u(x,t), \min_{x\in\overline\Omega} v(x,t)\right\}\right)  = 0.
$$
Observe that in this case, $\inf_{x\in\overline\Omega} dist((u(x,t),v(x,t)), 0) = \min \{ \min_{x\in\overline\Omega} u(x,t), \min_{x\in\overline\Omega} v(x,t) \}$ because the components $u$ and $v$ are strictly positive and $I_1 = I_2 = (0,\infty)$.

Therefore, to prove Theorem \ref{teo.existe} we only have to see that if $T<\infty$ then the solution is bounded, which implies that the second option is satisfied. This result is given by comparison in the following lemma, which finishes the proof. \qed

\begin{lema} \label{lema.MNsuper}
    Let $u_0$ and $v_0$  be two positive functions in $C(\overline{\Omega})$ and $(u,v) \in \mathcal{C}^1([0,\tau), \mathcal{C}(\overline{\Omega})\times \mathcal{C}(\overline{\Omega}))$ the corresponding solution of system \eqref{1.1}. Define the quantities
    \begin{equation*}
        M=\max\{1,\|u_0\|_\infty\}, \;\;\; N=\max\{1,\|v_0\|_\infty\}.
    \end{equation*}
    Then $u(x,t) \leq M$ and $v(x,t) \leq N$ for every $(x,t)\in \overline\Omega \times[0,\tau)$.
\end{lema}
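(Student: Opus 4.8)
The plan is to deduce both bounds from the comparison principle Lemma \ref{lemacomparacion} by exhibiting the constant pair $(\overline u,\overline v)\equiv(M,N)$ as a supersolution of \eqref{1.1} while using that the solution $(u,v)$ is itself a subsolution. As in the proof of Theorem \ref{teo.existe}, one views \eqref{1.1} as the instance of \eqref{auxsistema.comparacion} with $n=2$, $I_1=I_2=(0,\infty)$, $J_1=J_2=J$, $h_1=h_2=-1$, $g_1(x,t,u,v)=\int_{\mathbb{R}^N\setminus\Omega}J(x-y)\,dy-\lambda v^{-p}$ and $g_2(x,t,u,v)=\int_{\mathbb{R}^N\setminus\Omega}J(x-y)\,dy-\mu u^{-q}$; the Kamke condition \eqref{kamke.condition} holds here since $\frac{\partial g_1}{\partial v}=\lambda p\,v^{-p-1}\ge0$ and $\frac{\partial g_2}{\partial u}=\mu q\,u^{-q-1}\ge0$ on $\mathcal{I}$, so Lemma \ref{lemacomparacion} applies.

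First I would check that $(u,v)$ is a subsolution on $[0,\tau)$: it is $\mathcal{C}^1$, takes values in $\mathcal{I}=(0,\infty)^2$ (positivity being part of being a classical solution of \eqref{1.1}), satisfies the two equations with equality and hence the reverse inequalities of Definition \ref{defsupersolucion}, and $u_0^i\le u_0^i$ trivially. Next I would verify that $(\overline u,\overline v)\equiv(M,N)$ is a supersolution on, say, $[0,\infty)$. It lies in $\mathcal{I}$ because $M,N\ge1$, and $\overline u(x,0)=M\ge\|u_0\|_\infty\ge u_0(x)$, $\overline v(x,0)=N\ge\|v_0\|_\infty\ge v_0(x)$. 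For the differential inequalities, write $j(x)=\int_\Omega J(x-y)\,dy$; since $J\ge0$ and $\int_{\mathbb{R}^N}J=1$, one has $0\le j(x)\le1$ and $\int_{\mathbb{R}^N\setminus\Omega}J(x-y)\,dy=1-j(x)$. The right-hand side of the first inequality in Definition \ref{defsupersolucion} then equals
$$
M\,j(x)-M+\bigl(1-j(x)\bigr)-\lambda N^{-p}=-(M-1)\bigl(1-j(x)\bigr)-\lambda N^{-p}\le 0=\overline u_t,
$$
because $M\ge1$, $1-j(x)\ge0$ and $\lambda N^{-p}>0$; the second inequality reduces in the same way to $-(N-1)(1-j(x))-\mu M^{-q}\le 0=\overline v_t$. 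Applying Lemma \ref{lemacomparacion} with $\tau_0=0$ and $\tau=\min\{\infty,\tau\}=\tau$ yields $u(x,t)\le M$ and $v(x,t)\le N$ for every $(x,t)\in\overline\Omega\times[0,\tau)$.

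The only point requiring any care is the displayed computation above, which is precisely the reason $M$ and $N$ are capped below by $1$ rather than taken equal to $\|u_0\|_\infty$ and $\|v_0\|_\infty$: the ``boundary'' contribution $\int_{\mathbb{R}^N\setminus\Omega}J(x-y)\,dy$, coming from the Dirichlet datum $1$, could otherwise push the solution above the sup of the initial data, and the choice $M,N\ge1$ guarantees the non-local diffusion term never does so (the singular absorption term only helps, which is why it can be discarded in the estimate). Everything else is a routine check of the hypotheses of Definition \ref{defsupersolucion} and Lemma \ref{lemacomparacion}.
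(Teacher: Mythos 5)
Your proof is correct and follows essentially the same route as the paper: you exhibit the constant pair $(M,N)$ as a supersolution of \eqref{1.1} (using $M,N\ge1$ to absorb the contribution of $\int_{\mathbb{R}^N\setminus\Omega}J(x-y)\,dy$) and conclude by the comparison principle of Lemma \ref{lemacomparacion}. The only difference is cosmetic: you verify the hypotheses of the comparison lemma (Kamke condition, subsolution property of $(u,v)$) explicitly, which the paper leaves implicit.
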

\begin{proof}
    We prove that $(M,N)$ is a supersolution of system \eqref{1.1} with initial data $(u_0,v_0)$. On one hand, we see clearly that $M\geq u_0(x)$ and $N \geq v_0(x)$ for every $x\in\overline\Omega$. On the other hand, since $M, N\geq 1$, we see that
    \begin{equation*}
        M \int_{\Omega} J(x-y) dy + \int_{\mathbb{R}^N \backslash \Omega} J(x-y) dy - M - \lambda N^{-p} \leq M - M - \lambda N^{-p} \leq 0.
    \end{equation*}
    The other inequality is analogous. Therefore, thanks to Lemma \ref{lemacomparacion},
    \begin{equation*}
        u(x,t) \leq M, \;\;\; v(x,t) \leq N
    \end{equation*}
    for all $(x,t)\in\overline{\Omega}\times [0,\tau)$.
\end{proof}

The comparison result from Lemma \ref{lemacomparacion} also gives us some a priori estimates of the solutions of \eqref{1.1} that we will use later.

\begin{cor} \label{lemamenorque0}
Let $(u,v)$ be a solution of \eqref{1.1} with initial data $(u_0,v_0)$. If the initial data satisfy, for every $x\in\overline\Omega$,
$$
\begin{array}{l}
\displaystyle\int_\Omega J(x-y)u_0(y)\,dy
+\int_{\mathbb{R}^N\setminus\Omega} J(x-y)\,dy -u_0(x)-\lambda v_0^{-p}(x)\le 0\\
\displaystyle\int_\Omega J(x-y)v_0(y)\,dy
+\int_{\mathbb{R}^N\setminus\Omega} J(x-y)\,dy -v_0(x)-\mu u_0^{-q}(x)\le0
\end{array}
$$
then $u_t(x,t)\leq 0$ and $v_t(x,t) \leq 0$ for every $x\in\overline{\Omega}$ and every $t \in[0,\tau)$, with $\tau>0$ the maximal existence time of the solution.
\end{cor}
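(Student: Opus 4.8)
The plan is to reduce the statement to an application of the comparison principle of Lemma \ref{lemacomparacion}, applied not to $(u,v)$ itself but to the pair of time derivatives $(u_t,v_t)$. By Theorem \ref{teo.existe} the solution $(u,v)$ belongs to $\mathcal{C}^\infty([0,\tau),\mathcal{C}(\overline\Omega)\times\mathcal{C}(\overline\Omega))$, so I may set $w:=u_t$, $z:=v_t$ and differentiate the equations in time. Since $u$ and $v$ are extended by the constant $1$ on $\mathbb{R}^N\setminus\overline\Omega$, the derivatives $u_t,v_t$ vanish there, and the term $\int_{\mathbb{R}^N\setminus\Omega}J(x-y)\,dy$ in \eqref{1.1} is constant in time; hence differentiating \eqref{1.2} with respect to $t$ gives, for $(x,t)\in\overline\Omega\times[0,\tau)$,
\begin{equation*}
\left\{
\begin{array}{l}
\displaystyle w_t(x,t)=\int_\Omega J(x-y)w(y,t)\,dy-w(x,t)+\lambda p\,v^{-p-1}(x,t)\,z(x,t),\\
\displaystyle z_t(x,t)=\int_\Omega J(x-y)z(y,t)\,dy-z(x,t)+\mu q\,u^{-q-1}(x,t)\,w(x,t).
\end{array}
\right.
\end{equation*}

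Next I would observe that this linear system is of the form \eqref{auxsistema.comparacion} with $n=2$, $I_1=I_2=\mathbb{R}$ (so $\mathcal{I}=\mathbb{R}^2$, $\partial\mathcal{I}=\emptyset$, and the condition on $\mathrm{dist}(\cdot,\partial\mathcal{I})$ is vacuous), $J_1=J_2=J$, $h_1\equiv h_2\equiv-1$, and $g_1(x,t,w,z)=\lambda p\,v^{-p-1}(x,t)\,z$, $g_2(x,t,w,z)=\mu q\,u^{-q-1}(x,t)\,w$. Because $u(x,t),v(x,t)>0$ on $\overline\Omega\times[0,\tau)$, the coefficients $v^{-p-1}$, $u^{-q-1}$ are continuous, so $g_1,g_2$ are continuous on $\overline\Omega\times[0,\tau)\times\mathbb{R}^2$ and (being linear) $\mathcal{C}^1$ in $(w,z)$. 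The Kamke condition \eqref{kamke.condition} holds since $\partial g_1/\partial z=\lambda p\,v^{-p-1}\geq0$ and $\partial g_2/\partial w=\mu q\,u^{-q-1}\geq0$, using $\lambda,\mu,p,q>0$. The pair $(w,z)$ is a classical solution of this system, hence in particular a subsolution in the sense of Definition \ref{defsupersolucion}, and it lies in $\mathcal{C}^1([0,\tau),\mathcal{C}(\overline\Omega,\mathbb{R}^2))$ by the $\mathcal{C}^\infty$ regularity of $(u,v)$.

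Finally, the constant pair $(0,0)$ is a stationary solution of the linear system above, hence a supersolution on $[0,\tau)$, and its values dominate the initial data of $(w,z)$: evaluating the equations of \eqref{1.1} at $t=0$, the hypothesis of the corollary says exactly that $w(x,0)=u_t(x,0)\leq0$ and $z(x,0)=v_t(x,0)\leq0$ for every $x\in\overline\Omega$. Applying Lemma \ref{lemacomparacion} with $\overline u=(0,0)$ and $\underline u=(w,z)$ then yields $u_t(x,t)=w(x,t)\leq0$ and $v_t(x,t)=z(x,t)\leq0$ for every $x\in\overline\Omega$ and $t\in[0,\tau)$, which is the claim. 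The argument is essentially routine once the linearized system is written down; the only points demanding a little care are the reduction of the convolution term to an integral over $\Omega$ (valid precisely because the extensions of $u,v$ outside $\overline\Omega$ are time-independent) and the sign check in the Kamke condition, neither of which poses a genuine obstacle.
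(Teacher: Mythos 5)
Your proposal is correct and follows essentially the same route as the paper: differentiate the system in time, recognize the resulting linear system for $(u_t,v_t)$ as an instance of \eqref{auxsistema.comparacion} with $\mathcal{I}=\mathbb{R}^2$, verify the Kamke condition, and compare the subsolution $(u_t,v_t)$ with the supersolution $(0,0)$ via Lemma \ref{lemacomparacion}. The only difference is that you spell out the verification of the hypotheses (Kamke sign check, regularity of the coefficients, initial-data comparison) in more detail than the paper does.
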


\begin{proof}
Notice that, since $u$ and $v$ are $\mathcal{C}^2$ in the time variable, $(u_t,v_t)$ satisfies the following equations in $\overline\Omega \times [0,\tau)$:
\begin{align*}
    (u_t)_t (x,t) &= \int_\Omega J(x-y) u_t(y,t) dy - u_t(x,t) +\lambda p v^{-p-1} v_t (x,t), \\
    (v_t)_t (x,t) &= \int_\Omega J(x-y) v_t(y,t) dy - v_t(x,t) +\mu q u^{-q-1} u_t (x,t),
\end{align*}
and the initial data fulfill $u_t(x,0) \leq 0$, $v_t(x,0) \leq 0$ thanks to the hypotheses.

This system of equations is of the type \eqref{auxsistema.general}, with $I_{u_t}=I_{v_t}=\mathbb{R}$, $h_1(x,t)=h_2(x,t)=-1$, $g_1(x,t,u_t,v_t) = \lambda p v(x,t)^{-p-1} v_t$ and $g_2(x,t,u_t,v_t) = \mu q u(x,t)^{-q-1} u_t$.

It is easy to see that $(0,0)$ is a supersolution of this system and $g_1, g_2$ fulfill the conditions of Lemma \ref{lemacomparacion}, so we can apply it to get the desired result.
\end{proof}

\end{section}

\begin{section}{Quenching versus global existence}

In this section, we will prove Theorem \ref{teo.estacionarias}. To do so, first we need some lemmas that will help us understand how the solutions of the system behave. Throughout this section we will assume $\lambda,\mu,p,q>0$ unless otherwise specified.

\begin{lema} \label{condicionquenching}
    Let $u_0,v_0\in \mathcal{C}(\overline{\Omega})$ be positive initial data of \eqref{1.1} such that at some point $x_0\in \overline{\Omega}$ they satisfy
    \begin{equation} \label{condquenching}
        u_0(x_0) \leq \left( \frac{\mu}{N+\varepsilon} \right)^{1/q}, \; \text{ and } \;\;\; v_0(x_0) \leq \left( \frac{\lambda}{M+\varepsilon} \right)^{1/p},
    \end{equation}
    where $M=\max\{1,\|u_0\|_\infty\}$, $N=\max\{1,\|v_0\|_\infty\}$ and $\varepsilon>0$. Then the solution of system \eqref{1.1} presents quenching in finite time
    $T_\varepsilon\le \min\{u_0(x_0),v_0(x_0)\}/\varepsilon$.
    Furthermore, $u_t(x_0,t) < -\varepsilon$ and $v_t(x_0,t) < -\varepsilon$ for every $t\in[0,T_\varepsilon)$.
\end{lema}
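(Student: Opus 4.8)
The plan is to combine the a priori upper bounds from Lemma \ref{lema.MNsuper} with a pointwise differential inequality at $x_0$ and a continuity (bootstrap) argument. Let $(u,v)$ be the solution of \eqref{1.1} with maximal existence time $\tau$. By Lemma \ref{lema.MNsuper} we have $u(\cdot,t)\le M$ and $v(\cdot,t)\le N$ on $\overline\Omega\times[0,\tau)$, and since $M,N\ge 1$ and $\int_{\mathbb{R}^N}J=1$, the nonlocal terms at $x_0$ are controlled:
$$
\int_\Omega J(x_0-y)u(y,t)\,dy+\int_{\mathbb{R}^N\setminus\Omega}J(x_0-y)\,dy\le M,\qquad
\int_\Omega J(x_0-y)v(y,t)\,dy+\int_{\mathbb{R}^N\setminus\Omega}J(x_0-y)\,dy\le N .
$$
Evaluating \eqref{1.1} at $x=x_0$ therefore gives, for every $t\in[0,\tau)$,
$$
u_t(x_0,t)\le M-u(x_0,t)-\lambda v^{-p}(x_0,t),\qquad v_t(x_0,t)\le N-v(x_0,t)-\mu u^{-q}(x_0,t).
$$

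Next I would run the bootstrap. Set $T^\ast=\sup\{t\in[0,\tau): u(x_0,s)\le u_0(x_0)\text{ and }v(x_0,s)\le v_0(x_0)\ \forall s\in[0,t]\}$. This is positive since at $t=0$ the hypothesis \eqref{condquenching} forces $\lambda v_0^{-p}(x_0)\ge M+\varepsilon$ and $\mu u_0^{-q}(x_0)\ge N+\varepsilon$, hence $u_t(x_0,0)<-\varepsilon<0$ and $v_t(x_0,0)<-\varepsilon<0$, so both quantities strictly decrease near $t=0$. On $[0,T^\ast)$ the bounds $u(x_0,s)\le u_0(x_0)\le(\mu/(N+\varepsilon))^{1/q}$ and $v(x_0,s)\le v_0(x_0)\le(\lambda/(M+\varepsilon))^{1/p}$ yield $\mu u^{-q}(x_0,s)\ge N+\varepsilon$ and $\lambda v^{-p}(x_0,s)\ge M+\varepsilon$, so the displayed inequalities together with $u,v>0$ give
$$
u_t(x_0,s)\le -u(x_0,s)-\varepsilon<-\varepsilon,\qquad v_t(x_0,s)\le -v(x_0,s)-\varepsilon<-\varepsilon,\qquad s\in[0,T^\ast).
$$
If $T^\ast<\tau$, integrating yields $u(x_0,T^\ast)\le u_0(x_0)-\varepsilon T^\ast<u_0(x_0)$ and $v(x_0,T^\ast)<v_0(x_0)$, so by continuity both remain strictly below their initial values slightly beyond $T^\ast$, contradicting maximality; hence $T^\ast=\tau$, and the two derivative bounds hold on all of $[0,\tau)$.

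Finally, integrating $u_t(x_0,s)<-\varepsilon$ gives $0<u(x_0,t)<u_0(x_0)-\varepsilon t$, which forces $t<u_0(x_0)/\varepsilon$; similarly $t<v_0(x_0)/\varepsilon$. Thus $\tau\le\min\{u_0(x_0),v_0(x_0)\}/\varepsilon<\infty$, so by Theorem \ref{teo.existe} the maximal time $T=\tau$ is finite and \eqref{eq-m} holds, i.e. the solution presents quenching at $T_\varepsilon=\tau\le\min\{u_0(x_0),v_0(x_0)\}/\varepsilon$; the bounds $u_t(x_0,t)<-\varepsilon$ and $v_t(x_0,t)<-\varepsilon$ just established hold for every $t\in[0,T_\varepsilon)$, as claimed. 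I expect the only delicate point to be the bootstrap step — verifying that the sign information on the derivatives genuinely propagates up to the maximal existence time — but this is a routine open/closed continuity argument once the pointwise differential inequalities above are in hand, the rest being the explicit choice of thresholds in \eqref{condquenching} designed precisely so that $\lambda v^{-p}$ dominates $M$ and $\mu u^{-q}$ dominates $N$.
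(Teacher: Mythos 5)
Your proposal is correct and follows essentially the same route as the paper: both use the a priori bounds $u\le M$, $v\le N$ from Lemma \ref{lema.MNsuper}, the observation that \eqref{condquenching} forces $\lambda v^{-p}(x_0,\cdot)\ge M+\varepsilon$ and $\mu u^{-q}(x_0,\cdot)\ge N+\varepsilon$ as long as the values at $x_0$ stay below their initial data, and a continuity argument to propagate $u_t(x_0,t),v_t(x_0,t)<-\varepsilon$ up to the maximal time before integrating. The only cosmetic difference is that you bootstrap on the set where the solution values stay below $u_0(x_0),v_0(x_0)$, while the paper argues by contradiction at the first time one of the derivatives vanishes; these are interchangeable.
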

\begin{proof}
First we note that the solution $(u(x,t),v(x,t))$ is defined in $(x,t) \in \overline\Omega \times [0,T_\varepsilon)$, where $T_\varepsilon \in (0,\infty)$ or $T_\varepsilon=+\infty$. Thanks to Lemma \ref{lema.MNsuper},
    \begin{equation*}
        u(x,t) \leq M, \;\;\; v(x,t) \leq N
    \end{equation*}
for all $(x,t)\in\overline\Omega\times[0,T_\varepsilon)$.  On the other hand, by \eqref{condquenching} and the fact that $u_0$ and $v_0$ are strictly positive functions,
    \begin{equation*}
        u_t(x_0,0) < \int_\Omega J(x_0-y) u_0(y) + \int_{\mathbb{R}^N \backslash \Omega}J(x-y) dy - \lambda v_0(x_0)^{-p} \leq M - \lambda \left(\frac{\lambda}{M+\varepsilon} \right)^{-1} =  -\varepsilon.
    \end{equation*}
    \begin{equation*}
        v_t(x_0,0) < \int_\Omega J(x_0-y) v_0(y) + \int_{\mathbb{R}^N \backslash \Omega}J(x-y) dy - \mu u_0(x_0)^{-q} \leq N - \mu \left(\frac{\mu}{N+\varepsilon} \right)^{-1} = -\varepsilon
    \end{equation*}

    Now suppose that there exists a time $t_1\in(0,T_\varepsilon)$ in which either $u_t(x_0,t)$ or $v_t(x_0,t)$ reach $0$ for the first time. Without loss of generality, $u_t(x_0,t_1)=0$, $u_t(x_0,t)<0$ for all $t \in [0, t_1)$, and $v_t(x_0,t) < 0 $ for all $t\in[0,t_1)$.
    In this situation, it is clear that $v(x_0,t)<v_0(x_0)$ for $t\in[0,t_1)$. Then we conclude that
    \begin{align*}
       u_t(x_0,t_1) &= \int_\Omega J(x_0-y) u(y,t_1) + \int_{\mathbb{R}^N \backslash \Omega}J(x_0-y) dy - u(x_0,t_1) - \lambda v(x_0,t_1)^{-p}, \\
       &< \int_\Omega J(x_0-y) u(y,t_1) + \int_{\mathbb{R}^N \backslash \Omega}J(x_0-y) dy - \lambda v(x_0,t_1)^{-p} < M - \lambda v_0(x_0)^{-p} \leq -\varepsilon
    \end{align*}
    which brings us to a contradiction with the fact that $u_t(x_0,t_1)=0$. Therefore, $u(x_0,\cdot)$ and $v(x_0,\cdot)$ are decreasing functions and applying the same argument as before,
    $$
    u_t(x_0,t) <-\varepsilon,\qquad v_t(x_0,t) <-\varepsilon
    $$
    for every $t\in[0,T_\varepsilon)$. Integrating these inequalities, we get
    $$
    u(x_0,t)<u_0(x_0)-\varepsilon t,\qquad
    v(x_0,t)<v_0(x_0)-\varepsilon t.
    $$
    Therefore, the solution quenches at finite time $T_\varepsilon$ and we have
    $$
    T_\varepsilon\le \min\left\{\frac{u_0(x_0)}{\varepsilon},\frac{v_0(x_0)}{\varepsilon}\right\}.
    $$
\end{proof}

The following result gives us an important property on the behaviour of globally defined solutions of system \eqref{1.1}.

\begin{lema} \label{lema.globalesmenorque1}
    Let $(u,v)$ be a globally defined solution of system \eqref{1.1}. Then there exists a time $t_1 \in [0,\infty)$ for which $u(x,t)\leq 1$ and $v(x,t)\leq 1$ for every $x\in\overline\Omega$ and $t\in[t_1,\infty)$. Furthermore, if $(w,z)$ is a stationary solution of \eqref{1.1}, then $\mu^{1/q} < w(x) \leq 1$ and $ \lambda^{1/p} < z(x) \leq 1$ for every $x\in\overline\Omega$.
\end{lema}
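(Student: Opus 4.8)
The plan is to treat the two assertions separately. For the first one (that eventually $u,v\le 1$) I would invoke the comparison principle of Lemma~\ref{lemacomparacion} against two explicit, elementary supersolutions: a linearly decreasing profile that drives the components down to the level $1$ in finite time, followed by the constant pair that keeps them there forever. The only preliminary input is Lemma~\ref{lema.MNsuper}: since $(u,v)$ is global, $u(x,t)\le M:=\max\{1,\|u_0\|_\infty\}$ and $v(x,t)\le N:=\max\{1,\|v_0\|_\infty\}$ for all $(x,t)\in\overline\Omega\times[0,\infty)$.

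For the $u$-component I would proceed in two stages. On $[0,t_1^u]$ with $t_1^u=(M-1)N^p/\lambda$, take $(\overline u,\overline v)=(\ell(t),N)$, where $\ell(t)=M-\lambda N^{-p}t\ge 1$ and $\ell(t_1^u)=1$, so that $(\overline u,\overline v)$ stays in $\mathcal I=(0,\infty)\times(0,\infty)$. The key computation is that, since $\ell\ge 1$ and $N\ge 1$, extending these constants by $1$ outside $\overline\Omega$ gives $J\ast\overline u\le\ell=\overline u$ and $J\ast N\le N$; hence $\overline u_t=-\lambda N^{-p}\ge J\ast\overline u-\overline u-\lambda\overline v^{-p}$ and $\overline v_t=0\ge J\ast N-N-\mu\ell^{-q}$, and since $\overline u(\cdot,0)=M\ge u_0$, $\overline v(\cdot,0)=N\ge v_0$, this is a supersolution of \eqref{1.1}. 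Lemma~\ref{lemacomparacion} then gives $u(\cdot,t)\le\ell(t)$ on $[0,t_1^u]$, so $u(\cdot,t_1^u)\le 1$. On $[t_1^u,\infty)$ I would then use the constant supersolution $(1,N)$ (now $J\ast 1-1-\lambda N^{-p}=-\lambda N^{-p}\le 0$ and $J\ast N-N-\mu\cdot 1^{-q}\le-\mu<0$), whose data at $t_1^u$ dominate $u(\cdot,t_1^u)\le 1$ and $v(\cdot,t_1^u)\le N$; Lemma~\ref{lemacomparacion} gives $u(x,t)\le 1$ for all $t\ge t_1^u$. The symmetric argument for $v$ (with $t_1^v=(N-1)M^q/\mu$) gives $v(x,t)\le 1$ for $t\ge t_1^v$, and I would take $t_1=\max\{t_1^u,t_1^v\}$.

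For the stationary statement, note that a stationary solution $(w,z)$ is in particular a global time-independent solution of \eqref{1.1}, so the first part already forces $w(x)\le 1$ and $z(x)\le 1$ on $\overline\Omega$ (alternatively, evaluating the first stationary equation at a maximum point $x_m$ of $w$ gives $0=J\ast w(x_m)-w(x_m)-\lambda z(x_m)^{-p}\le-\lambda z(x_m)^{-p}<0$ whenever $w(x_m)\ge 1$, a contradiction, and similarly for $z$). Using $z\le 1$ on all of $\mathbb{R}^N$ after the extension by $1$ we get $J\ast z(x)\le 1$, so the second stationary equation yields $\mu\,w(x)^{-q}=J\ast z(x)-z(x)\le 1-z(x)<1$ because $z(x)>0$; hence $w(x)>\mu^{1/q}$. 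The first stationary equation gives $z(x)>\lambda^{1/p}$ in the same way.

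There is no serious obstacle here; the one point needing care is the bookkeeping of the exterior contribution $\int_{\mathbb{R}^N\setminus\Omega}J(x-y)\,dy$ produced by extending the candidate supersolutions by $1$ — it is precisely this term that is absorbed by requiring the comparison constants to be $\ge 1$, which is what yields $J\ast(\text{constant})\le(\text{constant})$ — together with checking that all these supersolutions remain inside $\mathcal I=(0,\infty)\times(0,\infty)$ so that Lemma~\ref{lemacomparacion} is applicable.
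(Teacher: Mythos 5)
Your proof is correct and follows essentially the same route as the paper: comparison via Lemma~\ref{lemacomparacion} against a linearly decreasing supersolution that brings the component down to the level $1$ in finite time, followed by the constant supersolution, and then the same manipulation of the stationary equations (using $J\ast z\le 1$ and $z>0$) to get the strict lower bounds $w>\mu^{1/q}$, $z>\lambda^{1/p}$. The only difference is cosmetic: you decouple the two components by freezing the other one at its a priori bound $N$ (resp.\ $M$) from Lemma~\ref{lema.MNsuper}, whereas the paper runs a sequential two-stage argument with a slowly decreasing second component; both are valid.
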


\begin{proof}
    Consider $(u,v)$ a globally defined solution with initial data $(u_0,v_0)$.

    If $\|u_0\|_\infty, \|v_0\|_\infty \leq 1$, then $(U,V)$ with $U \equiv V \equiv 1$ is a supersolution of \eqref{1.1} with these initial data. Using Lemma \ref{lemacomparacion}, we get that $1 \geq u(x,t)$ and $1\geq v(x,t)$ for every $(x,t)\in\overline\Omega \times [0,\infty)$.

    If $\|u_0\|_\infty > 1$, consider the pair of functions
    \begin{equation*}
     U(t) = \|u_0\|_\infty - \lambda \|v_0\|_\infty^{-p} t, \;\;\; V(t) = \|v_0\|_\infty - C t,
    \end{equation*}
    with $0<C\leq \mu \|u_0\|_\infty^{-q}$ sufficiently small so that there is a time $t_0 \in [0,\infty)$ in which $U(t_0)=1$ and $V(t_0)\geq 1$. Then we can prove that $(U,V)$ is a supersolution of \eqref{1.1} in $[0,t_0)$. Indeed, we have that $U(0)\geq \|u_0\|_\infty$, $V(0)\geq \|v_0\|_\infty$ and, since $U(t),V(t)\geq 1$ for every $t\in[0,t_0]$:
\begin{equation*}
    U(t) \int_\Omega J(x-y) dy + \int_{\mathbb{R}^N\backslash\Omega} J(x-y) dy - U(t) - \lambda V(t)^{-p} \leq - \lambda \|v_0\|_\infty^{-p} = U_t (t),
\end{equation*}
\begin{equation*}
    V(t) \int_\Omega J(x-y) dy + \int_{\mathbb{R}^N\backslash\Omega} J(x-y) dy - V(t) - \mu U(t)^{-q} \leq - \mu \|u_0\|_\infty^{-q} \leq -C = V_t (t).
\end{equation*}
Then by Lemma \ref{lemacomparacion} we know that $u(x,t) \leq U(t)$ and $v(x,t) \leq V(t)$ for every $x\in\overline\Omega$ and $t\in[0,t_0)$. Since $u(\cdot,t),v(\cdot,t),U(t),V(t)$ are continuous in $t\in [0,\infty)$, it is also true that $u(x,t_0) \leq U(t_0) =  1$ and $v(x,t_0) \leq V(t_0)$. Now consider the pair of functions
\begin{equation*}
     \overline{U} (t) = 1, \;\;\; V (t) = \|v_0\|_\infty - C t,
    \end{equation*}
and define $t_1 \in [t_0,\infty)$ as the time in which $V(t_1)=1$. Then we can prove that $(\overline{U}, V)$ is a supersolution of \eqref{1.1} in $[t_0,t_1)$. Indeed, we know that $\overline{U}(t_0) \geq u(x,t_0)$ and $V(t_0) \geq v(x,t_0)$ for every $x\in\overline\Omega$, and, for every $t\in[t_0,t_1)$:
\begin{equation*}
    \int_\Omega J(x-y) dy + \int_{\mathbb{R}^N\backslash\Omega} J(x-y) dy - 1 - \lambda V(t)^{-p} \leq - \lambda \|v_0\|_\infty^{-p} \leq 0 = \overline{U}_t (t),
\end{equation*}
\begin{equation*}
    V(t) \int_\Omega J(x-y) dy + \int_{\mathbb{R}^N\backslash\Omega} J(x-y) dy - V(t) - \mu \leq - \mu \leq -C = V_t (t),
\end{equation*}
where we have used that $C \leq \mu \|u_0\|_\infty^{-q} \leq \mu$ because $\|u_0\|_\infty > 1$. Then by Lemma \ref{lemacomparacion}, $u(x,t) \leq 1$ and $v(x,t) \leq V(t)$ for every $x\in\overline\Omega$ and $t\in[t_0,t_1)$. Again by continuity, $v(x,t_1) \leq V(t_1) = 1$.
Since there is a time $t_1 \in [0,\infty)$ in which $u(x,t_1) \leq 1$ and $v(x,t_1) \leq 1$ for every $x\in\overline\Omega$, we can consider $(\overline{U},\overline{V})$ with $\overline{U} \equiv \overline{V} \equiv 1$ as a supersolution of \eqref{1.1} for every $t\in[t_1,\infty)$, which by Lemma \ref{lemacomparacion}, gives us that $u(x,t) \leq 1$ and $v(x,t) \leq 1$ for every $x\in\overline\Omega$ and $t\in[t_1,\infty)$.

If $\|v_0\|_\infty > 1$, the proof is analogous.

Let us prove now the final statement of the lemma. Let $(w,z)$ be a stationary solution of \eqref{1.1}. Suppose first that $\|w\|_\infty >1$ or $\|z\|_\infty>1$. This is a globally defined solution and, following the previous argument, we reach a contradiction. Suppose now that there exists some $x_1 \in \overline\Omega$ such that $w(x_1) \leq \mu^{1/q}$. Since $(w,z)$ is a stationary solution, we have that
\begin{equation*}
    1 > \int_\Omega J(x_1-y)z(y) dy + \int_{\mathbb{R}^N \backslash \Omega} J(x_1-y) dy - z (x_1) = \mu w^{-q}(x_1) \geq 1.
\end{equation*}
which is a contradiction. If there exists $x_2\in\overline\Omega$ such that $z(x_2) \leq \lambda^{1/p}$, the argument is analogous and we reach a contradiction as well, so the result is proven.
\end{proof}

Thanks to this lemma, we get an important relation between the solution with constant initial data of value $1$ and the stationary solutions of the system.

\begin{lema}\label{lema.1quencheatodo}
Let $(u_1,v_1)$ be the solution of system \eqref{1.1} with initial data $u(x,0) \equiv v(x,0) \equiv 1$. Then, the solution $(u_1,v_1)$ satisfies either:

(i) It is well-defined for all $t\in[0,\infty)$ and it converges uniformly in space to a stationary solution $(w,z)$, or

(ii) it presents quenching in finite time. In this case all solutions present quenching in finite time.

Moreover, in case (i), the convergence to the stationary solution is from above, that is, $u_1(x,t) \geq w(x)$ and $v_1(x,t)\geq z(x)$ for every $(x,t)\in\overline\Omega \times [0,\infty)$.
\end{lema}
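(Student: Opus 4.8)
The plan rests on three observations about $(u_1,v_1)$ coming from Section \ref{sec-exist}. Since $\|u_0\|_\infty=\|v_0\|_\infty=1$, Lemma \ref{lema.MNsuper} gives $u_1,v_1\le 1$ on $\overline\Omega\times[0,T)$; the constant data $u_0\equiv v_0\equiv 1$ make the two quantities appearing in Corollary \ref{lemamenorque0} equal to $-\lambda$ and $-\mu$, so $(u_1)_t\le 0$, $(v_1)_t\le 0$ on $[0,T)$, i.e.\ both components are nonincreasing in $t$; and system \eqref{1.1} satisfies the Kamke condition \eqref{kamke.condition}, since $\partial g_1/\partial v=\lambda p v^{-p-1}\ge 0$ and $\partial g_2/\partial u=\mu q u^{-q-1}\ge 0$, so Lemma \ref{lemacomparacion} applies to it. The dichotomy in the statement is then simply $T<+\infty$ versus $T=+\infty$: by Theorem \ref{teo.existe}, if $T<\infty$ then $(u_1,v_1)$ quenches, which is the first half of (ii); the bulk of the work is to show that $T=\infty$ forces (i).

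For the second assertion in (ii), assume $T<\infty$ and suppose, for contradiction, that some solution $(u,v)$ is global. By Lemma \ref{lema.globalesmenorque1} there is a time $t_1$ with $u(\cdot,t_1),v(\cdot,t_1)\le 1$. Then $(x,s)\mapsto(u(x,t_1+s),v(x,t_1+s))$ solves \eqref{1.1} with initial data $\le(1,1)$, so, being a subsolution of the problem with data $(1,1)$, it is dominated by $(u_1,v_1)$ on the common interval $[0,T)$ by Lemma \ref{lemacomparacion}: $u(x,t_1+s)\le u_1(x,s)$ and $v(x,t_1+s)\le v_1(x,s)$. As $u_1,v_1$ are nonincreasing, $\min_{\overline\Omega}u_1(\cdot,s)$ and $\min_{\overline\Omega}v_1(\cdot,s)$ decrease to limits, at least one of which is $0$ because $(u_1,v_1)$ quenches at $T$; hence $\min_{\overline\Omega}u(\cdot,t_1+s)\to 0$ or $\min_{\overline\Omega}v(\cdot,t_1+s)\to 0$ as $s\nearrow T$, contradicting that $(u,v)$ is positive and continuous at the finite time $t_1+T$. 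So every solution quenches.

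For case (i), assume $T=\infty$; by monotonicity $u_1(x,t)\searrow w(x)\ge 0$ and $v_1(x,t)\searrow z(x)\ge 0$ for each $x$, with $w,z\le 1$, $u_1\ge w$, $v_1\ge z$ — this already yields the ``convergence from above''. First I would show $w,z>0$ on $\overline\Omega$: if $w(x_0)=0$ then $\mu u_1(x_0,t)^{-q}\to+\infty$ while the remaining terms of $(v_1)_t(x_0,t)$ stay bounded (all values lie in $[0,1]$), so $(v_1)_t(x_0,t)\to-\infty$ and hence $v_1(x_0,t)\to-\infty$, impossible; the same argument rules out $z(x_0)=0$. Knowing $z>0$ pointwise, dominated convergence in $y$ (with dominating function $J(x-\cdot)\mathbf 1_\Omega$) shows that $(u_1)_t(x,t)$ converges as $t\to\infty$; being $\le 0$ and integrable on $[0,\infty)$ (its integral is $1-w(x)$), its limit is $0$, which is exactly the first stationary equation, and likewise for $v_1$. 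Thus $(w,z)$ solves the stationary version of \eqref{1.1} pointwise. Evaluating it gives $\mu w(x)^{-q}=\int_\Omega J(x-y)z(y)\,dy+\int_{\mathbb{R}^N\setminus\Omega}J(x-y)\,dy-z(x)<1$ (using $z\le 1$ and $z(x)>0$), so $w>\mu^{1/q}$, and symmetrically $z>\lambda^{1/p}$; consequently $u_1(x,t)\ge w(x)>\mu^{1/q}$ and $v_1(x,t)\ge z(x)>\lambda^{1/p}$ for all $t$.

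It remains to upgrade the pointwise convergence $u_1(\cdot,t)\to w$, $v_1(\cdot,t)\to z$ to uniform convergence, and \textbf{this is the step I expect to be the main obstacle}, because the nonlocal operator $J\ast(\cdot)-(\cdot)$ does not regularize, so continuity of the limit is not automatic. The plan is to prove $w,z\in\mathcal{C}(\overline\Omega)$ and then apply Dini's theorem to the decreasing families $u_1(\cdot,t)$, $v_1(\cdot,t)$ on the compact set $\overline\Omega$. Continuity of $w,z$ should follow from the stationary identities together with the uniform lower bounds just obtained: since $0\le u_1\le 1$, dominated convergence gives $u_1(\cdot,t)\to w$ in $L^1(\Omega)$, hence the convolution terms $\int_\Omega J(x-y)u_1(y,t)\,dy$ converge to $\int_\Omega J(x-y)w(y)\,dy$ \emph{uniformly} in $x$ (as $J$ is bounded), so these limiting convolutions are continuous in $x$; feeding this into the identity $w(x)+\lambda z(x)^{-p}=\int_\Omega J(x-y)w(y)\,dy+\int_{\mathbb{R}^N\setminus\Omega}J(x-y)\,dy$ and its companion for $z$, and using that $s\mapsto s^{-p}$, $s\mapsto s^{-q}$ are Lipschitz on $[\lambda^{1/p},1]$, $[\mu^{1/q},1]$, one obtains continuity of $w$ and $z$ — equivalently, one argues directly that a small exceptional set on which $u_1(\cdot,t)-w$ is not yet uniformly small is averaged away after convolving with the bounded kernel $J$ over one further unit of time, which gives the uniform convergence. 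Once $w,z$ are continuous, $(w,z)$ is a genuine stationary solution and Dini closes case (i).
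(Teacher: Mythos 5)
Your proposal reproduces the paper's proof almost step for step: the same dichotomy between $T<\infty$ and $T=\infty$, monotonicity from Corollary \ref{lemamenorque0}, pointwise convergence to a positive limit, passage to the limit in the equation by dominated convergence, a Dini-type argument for uniform convergence, and, for the second half of (ii), the identical comparison argument through Lemma \ref{lema.globalesmenorque1} and Lemma \ref{lemacomparacion}. Your proof that $w,z>0$ (if $w(x_0)=0$ then $(v_1)_t(x_0,t)\to-\infty$, forcing $v_1(x_0,\cdot)\to-\infty$) is a little more direct than the paper's, which instead drives the solution into the hypotheses of Lemma \ref{condicionquenching} to contradict global existence; both are fine.

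The one place where you add something to the paper's argument is also the one place with a real problem. You are right that Dini's theorem needs $w,z\in\mathcal{C}(\overline\Omega)$ (a decreasing limit of continuous functions is only upper semicontinuous, and the paper simply asserts that $g^t=u_1(\cdot,t)-w$ is continuous), and your derivation of the continuity of $A(x)=\int_\Omega J(x-y)w(y)\,dy+\int_{\mathbb{R}^N\setminus\Omega}J(x-y)\,dy$ is correct. But the final bootstrap does not close. From $w=A-\lambda z^{-p}$ and $z=B-\mu w^{-q}$ with $A,B$ continuous, the Lipschitz constants of $\lambda s^{-p}$ on $[\lambda^{1/p},1]$ and of $\mu s^{-q}$ on $[\mu^{1/q},1]$ are $p\lambda^{-1/p}$ and $q\mu^{-1/q}$, so the two estimates $|w(x_n)-w(x)|\le o(1)+p\lambda^{-1/p}|z(x_n)-z(x)|$ and $|z(x_n)-z(x)|\le o(1)+q\mu^{-1/q}|w(x_n)-w(x)|$ combine into a useful contraction only when $pq\,\lambda^{-1/p}\mu^{-1/q}<1$; since $\lambda,\mu<1$ this product exceeds $pq$ and fails already for $p=q=2$. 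The coupled identities alone need not determine $w(x)$ continuously (the map $(w,z)\mapsto(w+\lambda z^{-p},\,z+\mu w^{-q})$ is not obviously injective on the relevant range), and the fallback sentence about averaging away an exceptional set is not an argument. So the continuity of $(w,z)$, hence the uniformity of the convergence, remains unproved in your write-up --- though, to be fair, the paper's own proof leaves exactly the same point implicit.
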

\begin{proof}
    $(i)$ First we assume that $(u_1,v_1)$ is a global solution. By Corollary \ref{lemamenorque0} we know that $(u_1)_t(x,t) \leq 0, (v_1)_t(x,t) \leq 0$ for every $x\in\overline\Omega$ and $t\geq0$. Since $(u_1,v_1)$ is bounded from below (by zero), its components converge point-wise, that is, $u_1 (x,t) \xrightarrow{t\rightarrow\infty} w(x)$ and $v_1 (x,t) \xrightarrow{t\rightarrow\infty} z(x)$ in $\overline\Omega$.

    First, let us prove that $w$ and $z$ are strictly positive. Indeed, suppose that there is $\overline{x} \in \overline\Omega$ such that $w(\overline{x})=0$ (the argument is the same if $z(\overline{x})=0$). This means that, for every $\varepsilon>0$, there exists a time $t_\varepsilon \in [0,\infty)$ such that $u_1 (\overline{x}, t) < \varepsilon$ for every $t\geq t_\varepsilon$. Taking $\varepsilon< \mu^{1/q}$, using Lemma \ref{lema.MNsuper}, and noticing that, in this case, $M=N=1$, we get that
    \begin{equation*}
        (v_1)_t (\overline{x},t) \leq 1 - v_1(\overline{x},t) - \mu \varepsilon^{-q} \leq 1 - \mu \varepsilon^{-q} < 0
    \end{equation*}
    for every $t\geq t_\varepsilon$. Since $u_1(\overline{x},t) \xrightarrow{t\to\infty} 0$ and $v_1(\overline{x},t)$ is strictly decreasing in $[t_\varepsilon,\infty)$, there exists $\tilde{t} \geq t_\varepsilon$ such that $u_1(\overline{x},\tilde{t}) \leq (\mu/2)^{1/q}$ and $v_1(\overline{x},\tilde{t}) \leq (\lambda/2)^{1/p}$. Then $(u_1,v_1)$ is under the conditions of Lemma \ref{condicionquenching} and the solution presents quenching in finite time, which is a contradiction.

    We then want to show that $(w,z)$ is a stationary solution. For that, bearing in mind that $w$ and $z$ are strictly positive, we can take limits on every term of \eqref{1.1} and use the Dominated Convergence Theorem to get, for every $x\in\overline\Omega$,
    \begin{align*}
        \lim_{t\rightarrow\infty} (u_1)_t(x,t) &= \int_\Omega J(x-y)w(y) dy + \int_{\mathbb{R}^N \backslash \Omega} J(x-y) dy - w(x) - \lambda z^{-p}(x). \\
        \lim_{t\rightarrow\infty} (v_1)_t(x,t) &= \int_\Omega J(x-y)z(y) dy + \int_{\mathbb{R}^N \backslash \Omega} J(x-y) dy - z(x) - \mu w^{-q}(x).
    \end{align*}
    Since $u_1(x,\cdot)$ and $v_1(x,\cdot)$ are bounded continuous functions in $\mathbb{R}^+$ and $\lim_{t\to\infty} (u_1)_t(x,t)$, $\lim_{t\to\infty} (v_1)_t(x,t)$ exist, these limits must be $0$, and we conclude $(w,z)$ is a stationary solution.

    Let us now prove that the convergence is uniform. First, we will prove that $(w,z)$ are continuous functions in $\overline\Omega$. Indeed, since $u_1$ and $v_1$ are non-increasing in time, for some fixed $x_0\in \overline\Omega$ and $t_0\in[0,\infty)$, we know that
    \begin{equation*}
        \begin{pmatrix}
        ((u_1)_t)_t (x_0,t) \\ ((v_1)_t)_t(x_0,t)
        \end{pmatrix} \leq \begin{pmatrix}
            -1  & \lambda p v_1^{-p-1}(x_0,t_0) \\
            \mu q u_1^{-q-1} (x_0,t_0) & -1 
        \end{pmatrix}
        \begin{pmatrix}
        (u_1)_t (x_0,t) \\ (v_1)_t(x_0,t)
        \end{pmatrix}.
    \end{equation*}
    for every $t\in[t_0,\infty)$. Furthermore, we know that $(u_1)_t(x_0,t),(v_1)_t(x_0,t) \leq 0$ and both tend to $0$ as $t\to\infty$, so by comparison the equilibrium point $(0,0)$ of the system is stable, and the eigenvalues of the square matrix must be both negative. This means that
    \begin{equation} \label{condicion.positividad.continuidad}
        1 - \lambda\mu pq v_1^{-p-1}(x_0,t_0) u_1^{-q-1}(x_0,t_0) > 0.
    \end{equation}
    Notice that this is true for every $(x_0,t_0) \in \overline\Omega \times[0,T)$. Next, thanks to $(w,z)$ being stationary solutions and the continuity of the integro-differential operator, we know that both $(w+\lambda z^{-p})(x)$ and $(z+\mu w^{-q})(x)$ are continuous in $\overline\Omega$. Suppose without loss of generality that $w(x)$ is not continuous at some point $\hat{x} \in \overline\Omega$, that is, there exists some $\{x_n\}_n \subset \overline\Omega$ such that
    \begin{equation*}
        x_n \longrightarrow \hat{x}, \;\; w(x_n) \longrightarrow A > w(\hat{x}), \;\; z(x_n) \longrightarrow B,
    \end{equation*}
    as $n\to \infty$ with some $A,B>0$ (the argument is very similar if $A<w(\hat{x})$). Using the continuity of the previous functions and applying the Mean Value Theorem, we get
    \begin{align*} \label{inequality.assuming.discontinuity}
        0 =& \, (A-w(\hat{x})) + \lambda(B^{-p} -z^{-p}(\hat{x})) >  (A-w(\hat{x})) - \lambda p z^{-p-1}(\hat{x})(B -z(\hat{x}))\\
        =& \,(A-w(\hat{x})) + \lambda p z^{-p-1}(\hat{x})(A^{-q} - w^{-q}(\hat{x})) > (A-w(\hat{x}))(1- \lambda \mu pq z^{-p-1}(\hat{x}) w^{-q-1}(\hat{x})).\nonumber
    \end{align*}
    This means that $1- \lambda \mu pq z^{-p-1}(\hat{x}) w^{-q-1}(\hat{x}) < 0$, which is a contradiction with \eqref{condicion.positividad.continuidad}.
    
    Finally, since $(u_1,v_1)(x,t)$ are continuous, non-increasing in $t$ and they converge pointwise to $(w,z)(x)$, which are also continuous, the convergence is uniform via Dini's Theorem.

    $(ii)$ Assume now that $(u_1,v_1)$ presents quenching in finite time $T$ and let us suppose that there exists $(u,v)$ a global solution of \eqref{1.1}. By Lemma \ref{lema.globalesmenorque1}, there exists a time $t_1\in[0,\infty)$ such that $u(x,t) \leq 1$ and $v(x,t) \leq 1$ for every $t\in[t_1,\infty)$. In particular, $u(x,t_1)\leq 1$ and $v(x,t_1) \leq 1$. Therefore, $(u_1(x,t),v_1(x,t))$ is a supersolution of system \eqref{1.1} with initial data $(u(x,t_1),v(x,t_1))$. By Lemma \ref{lemacomparacion}, $u_1(x,t) \geq u(x,t+t_1)$ and $v_1(x,t) \geq v(x,t+t_1)$ for every $t\in[0,T)$. Since $(u_1(x,t),v_1(x,t))$ quenches at time $T$, $(u,v)$ will also quench at some time $\widetilde T\leq T+t_1$, which is a contradiction with the assumption that $(u,v)$ is a global solution. Therefore, if $(u_1,v_1)$ presents quenching then every solution of \eqref{1.1} also quenches.
\end{proof}

\begin{lema} \label{lema.existenciaestacionarias}
    There is a neighbourhood of $(0,0)$, $U^\prime \subset \mathbb{R}^2$, such that there exist stationary solutions of system \eqref{1.1}  if $(\lambda,\mu) \in  U^\prime \cap ((0,\infty) \times (0,\infty))$.
\end{lema}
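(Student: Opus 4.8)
The plan is to bifurcate from the trivial stationary solution $(w,z)\equiv(1,1)$ (which corresponds to $(\lambda,\mu)=(0,0)$) by means of the implicit function theorem in the Banach space $X=\mathcal C(\overline\Omega)\times\mathcal C(\overline\Omega)$. Write $\kappa(x)=\int_{\mathbb R^N\setminus\Omega}J(x-y)\,dy\ge0$ and $(\mathcal K\varphi)(x)=\int_\Omega J(x-y)\varphi(y)\,dy$; then a positive pair $(w,z)\in X$ is a stationary solution of \eqref{1.1} exactly when $\mathcal Kw+\kappa-w-\lambda z^{-p}=0$ and $\mathcal Kz+\kappa-z-\mu w^{-q}=0$. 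Since $\int_{\mathbb R^N}J=1$ we have $\mathcal K1=1-\kappa$, so $(1,1)$ solves this for $\lambda=\mu=0$. I would then define, on the open set $V=\{(w,z)\in X:\ w>\tfrac12,\ z>\tfrac12\}$, the map
$$
F(w,z,\lambda,\mu)=\bigl(\mathcal Kw+\kappa-w-\lambda z^{-p},\ \mathcal Kz+\kappa-z-\mu w^{-q}\bigr),
$$
which is of class $\mathcal C^\infty$ on $V\times\mathbb R^2$ and satisfies $F(1,1,0,0)=(0,0)$.

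The crucial point is that $D_{(w,z)}F(1,1,0,0)=\mathrm{diag}(\mathcal L,\mathcal L)$ with $\mathcal L:=\mathcal K-I$, because the $z$- and $w$-derivatives of the coupling terms carry a factor $\lambda$, respectively $\mu$, and hence vanish at the origin. So I must show that $\mathcal L:\mathcal C(\overline\Omega)\to\mathcal C(\overline\Omega)$ is an isomorphism. For injectivity I would use a maximum principle: if $\mathcal Lu=0$ and $m=\max_{\overline\Omega}u>0$ is attained at $x_0$, then $m=(\mathcal Ku)(x_0)\le m\,(1-\kappa(x_0))$ forces $\kappa(x_0)=0$, and the equality case forces $u\equiv m$ on $B_\rho(x_0)\cap\Omega$, where $J>0$ on a ball $B_\rho(0)$ with $\rho>0$ (since $J(0)=\max J>0$ by radial monotonicity and $\int J=1$). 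Running the same argument at every point of $\{u=m\}$ shows that this set is a nonempty clopen subset of the connected set $\Omega$, hence equals $\Omega$, so $u\equiv m$ on $\overline\Omega$ and therefore $\kappa\equiv0$; but $\kappa(x)>0$ for $x\in\partial\Omega$ (because $B_\rho(x)$ meets $\mathbb R^N\setminus\overline\Omega$ in a set of positive measure, using that $\Omega$ is a smooth bounded domain), a contradiction. Hence $\max u\le0$; applying this to $-u$ gives $u\equiv0$. For surjectivity I would note that $\mathcal K$ has a continuous kernel on the compact set $\overline\Omega\times\overline\Omega$, hence is compact on $\mathcal C(\overline\Omega)$ by Arzel\`a--Ascoli, so $\mathcal L=-(I-\mathcal K)$ is Fredholm of index $0$ and injectivity yields bijectivity.

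With the invertibility of $D_{(w,z)}F(1,1,0,0)$ in hand, the implicit function theorem furnishes an open neighbourhood $U^\prime$ of $(0,0)$ in $\mathbb R^2$ and a $\mathcal C^1$ map $(\lambda,\mu)\mapsto(w_{\lambda,\mu},z_{\lambda,\mu})\in V$ with $F(w_{\lambda,\mu},z_{\lambda,\mu},\lambda,\mu)=0$ and $(w_{0,0},z_{0,0})=(1,1)$. Shrinking $U^\prime$ if necessary so that $w_{\lambda,\mu},z_{\lambda,\mu}>\tfrac12$ throughout, one concludes that for every $(\lambda,\mu)\in U^\prime\cap\bigl((0,\infty)\times(0,\infty)\bigr)$ the pair $(w_{\lambda,\mu},z_{\lambda,\mu})$ is a genuine positive continuous stationary solution of \eqref{1.1}, which is the assertion of the lemma.

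The step I expect to be the main obstacle is the invertibility of $\mathcal L$: injectivity rests on the maximum-principle propagation argument, which is exactly where the connectedness of $\Omega$, the regularity of $\partial\Omega$ (through $\kappa\not\equiv0$), and the strict positivity of $J$ near the origin are used. A variant that avoids the Fredholm alternative altogether is to show that the spectral radius of the positive compact operator $\mathcal K$ on $\mathcal C(\overline\Omega)$ is strictly less than $1$ — again by a maximum principle applied to the Krein--Rutman eigenfunction — so that $\mathcal L^{-1}=-\sum_{k\ge0}\mathcal K^{\,k}$ is given by a convergent Neumann series; this presentation has the bonus that $-\mathcal L^{-1}$ is manifestly positivity-preserving, a fact that is convenient for the monotonicity statements in the rest of the section.
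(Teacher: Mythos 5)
Your proposal is correct and follows essentially the same route as the paper: an implicit function theorem argument at the trivial solution $(1,1)$ for $(\lambda,\mu)=(0,0)$, with the linearization reducing to $\mathrm{diag}(K_J-I,\,K_J-I)$, injectivity of $K_J-I$ established by the same maximum-principle/connectedness propagation (your contradiction via $\kappa\equiv 0$ versus $\kappa>0$ near $\partial\Omega$ is the paper's ``$\int_\Omega J(x-y)\,dy=1$ for all $x$'' contradiction in different clothing), and bijectivity via compactness of $K_J$ and the Fredholm alternative. The only departure is the optional Neumann-series/spectral-radius variant you sketch at the end, which the paper does not pursue.
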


\begin{proof}
    This result follows from the Implicit Function Theorem. First we note that, if $\lambda = \mu = 0$, then $w_0\equiv z_0 \equiv 1$ is a stationary solution. We linearize around this solution, considering:
    \begin{equation*}
        \phi = 1 - u, \;\;\; \gamma = 1 - v.
    \end{equation*}
    Given small $\delta,\varepsilon >0$, let
    \begin{equation*}
    Y_0 = \{ (\phi,\gamma) \in \mathcal{C}(\overline{\Omega}) \times \mathcal{C}(\overline{\Omega}) : -\delta < \phi,\gamma < 1   \}
    \end{equation*}
    and  $\mathcal{F}: (-\varepsilon,\varepsilon) \times (-\varepsilon,\varepsilon) \times Y_0 \longrightarrow \mathcal{C}(\overline{\Omega}) \times \mathcal{C}(\overline{\Omega})$ be a non-linear operator given by
    \begin{align*}
        \mathcal{F}(\lambda,\mu,\phi,\gamma)(x) =& \left(\int_{\Omega} J(x-y)\phi(y) dy - \phi(x) + \lambda (1-\gamma(x))^{-p}, \right.\\
        &\left. \int_{\Omega} J(x-y)\gamma(y) dy - \gamma(x) + \mu (1-\phi(x))^{-q} \right).
    \end{align*}
    It is clear that $\mathcal{F}(0,0,0,0) = (0,0)$ and the differential operator with respect to the last two variables $D_{(\phi,\gamma)} \mathcal{F}$ evaluated at $\lambda=\mu=0$ is, for every $\xi,\chi\in \mathcal{C}(\overline\Omega)$ and $x\in\overline\Omega$,
    \begin{equation*}
        D_{(\phi,\gamma)} \mathcal{F} (0,0)(\xi,\chi)(x) = \begin{pmatrix}
                                        (K_J - I)(\xi)(x) & 0 \\
                                        0 & (K_J - I)(\chi)(x)
                                     \end{pmatrix},
    \end{equation*}
    where $K_J: \mathcal{C}(\overline\Omega) \longrightarrow \mathcal{C}(\overline\Omega)$ is defined as
    \begin{equation*}
        K_J (z)(x) = \int_\Omega J(x-y) z(y) dy.
    \end{equation*}

    The operator $K_J - I$ is clearly continuous and linear. Let us prove that it is also an injective operator.

    Take $z\in\mathcal{C}(\overline\Omega)$ such that $K_J (z) = z$ and let us prove that $z\equiv 0$. Define $x_0$ as $z(x_0) = \max_{x\in\overline\Omega} z(x)$ and take some $\delta>0$ such that $B(0,\delta) \subset \text{supp}(J)$. Suppose that $z(x_0)>0$. Under these assumptions we know that
    \begin{equation*}
        z(x_0) = K_J(z)(x_0)= \int_\Omega J(x_0-y)z(y)dy \leq \int_\Omega J(x_0-y)z(x_0)dy \leq z(x_0).
    \end{equation*}
    Therefore,
    \begin{equation*}
        \int_\Omega J(x_0-y) (z(y)-z(x_0)) dy = 0.
    \end{equation*}
    Since $J(x_0-y) > 0$ for every $y \in B(x_0,\delta)$, we conclude that $z(y) = z (x_0)$ for every $y\in B(x_0,\delta)$. The domain $\overline\Omega$ is connected and compact, so for any $\overline{x}\in\overline\Omega$, there exists a chain of points $\{x_0,x_1,x_2, \dots, x_n=\overline{x}\}$ such that $x_i \in B(x_{i-1},\delta)$ for every $i=1,\dots,n$. We know that $z(x_1)=z(x_0)$ thanks to our previous argument and, since $\delta$ only depends on $J$, it is clear that $z(x_2)=z(x_1)$ because $x_2 \in B(x_1,\delta)$. Repeating this, we get that $z(x_0) = z(x_i)$ for every $i=1,\dots,n$ and, in particular, $z(\overline{x})=z(x_0)$. Therefore, $z(x)=z(x_0)$ for every $x\in\overline\Omega$. However, if $z$ is a constant function then $K_J(z)=z$ implies that $\int_\Omega J(x-y) dy = 1$ for every $x\in\overline\Omega$, which is a contradiction. Then we conclude that $z(x)\leq 0$ for every $x\in\overline\Omega$. Applying the same argument to the function $-z$, we obtain that $z(x)\geq 0$ for every $x\in\overline\Omega$, which implies that $z\equiv 0$. Therefore, $K_J - I$ is an injective operator.

    Additionally, $K_J$ is compact, see Section 2.1.3 of \cite{SG}. Using Fredholm alternative we can further assert that $K_J - I$ is bijective and $D_{(\phi,\gamma)} \mathcal{F}(0,0)$ is too. Finally, using the Open Mapping Theorem, we deduce that $D_{(\phi,\gamma)}\mathcal{F}(0,0): \mathcal{C}(\overline{\Omega}) \times \mathcal{C}(\overline{\Omega}) \longrightarrow \mathcal{C}(\overline{\Omega}) \times \mathcal{C}(\overline{\Omega})$ is an isomorphism.

    Therefore, we can apply the Implicit Function Theorem to ensure that there is a neighbourhood $U^\prime \subset \mathbb{R}^2$ of $(0,0)$ such that, if $(\lambda,\mu) \in U^\prime$, then there is a $(\phi_{\lambda,\mu},\gamma_{\lambda,\mu})\in Y_0$ that satisfies $\mathcal{F}(\lambda,\mu,\phi_{\lambda,\mu},\gamma_{\lambda,\mu}) = (0,0)$, see \cite{CR}. Finally, if we define
    \begin{equation*}
    w_{\lambda,\mu}=\left\{
    \begin{array}{ll}
    1-\phi_{\lambda,\mu}, & x\in\overline\Omega,\\
    1, & x\in \mathbb{R}^N\backslash \overline\Omega,
    \end{array}\right.
    \qquad
    z_{\lambda,\mu}=\left\{
    \begin{array}{ll}
    1-\gamma_{\lambda,\mu}, & x\in\overline\Omega,\\
    1, & x\in \mathbb{R}^N\backslash \overline\Omega,
    \end{array}\right.
    \end{equation*}
    these are stationary solutions of our system with parameters $(\lambda,\mu) \in U^\prime \cap ((0,\infty) \times (0,\infty))$.
\end{proof}

With these lemmas we can finally prove one of our main results.

\begin{proof}[Proof of Theorem \ref{teo.estacionarias}]
    The proof is given in several steps.

    $i)$ There is a small neighbourhood around the origin $U^\prime \subset \mathbb{R}^2$ in which the system accepts stationary solutions, thanks to Lemma \ref{lema.existenciaestacionarias}.

    $ii)$ If we have $\lambda \geq 1$, there can be no stationary solutions in our system. This result follows from Lemma \ref{lema.globalesmenorque1}. If there exists a stationary solution $(w,z)$, we know that $\lambda^{1/p}<z(x)\leq 1$ for every $x\in\overline\Omega$, which implies that $\lambda < 1$. If we have $\mu\geq1$, it can also be proven that there are no stationary solutions with the same reasoning.

    $iii)$ If $(w_0,z_0)$ is a stationary solution of the system with parameters $(\lambda_0,\mu_0)$, then there exists a stationary solution for every $(\lambda,\mu)$ with $\lambda\leq\lambda_0$ and $\mu\leq \mu_0$.

    For this, we take a pair of parameters $(\lambda,\mu)$ such that $\lambda\leq \lambda_0$ and $\mu\leq\mu_0$, and notice that $(w_0,z_0)$ is a subsolution of the system \eqref{1.1} with parameters $(\lambda,\mu)$. Define $(u_1,v_1)$ as the solution of \eqref{1.1} with parameters $(\lambda,\mu)$ and initial data $u(x,0)\equiv v(x,0) \equiv 1$. Then, due to Lemma \ref{lemacomparacion}, $(u_1,v_1)$ is bounded from below by $(w_0,z_0)$. Thanks to Lemma \ref{lema.1quencheatodo}, $(u_1,v_1)$ converges to a stationary solution $(w,z)$. Moreover
    $$
    w(x)\ge w_0(x),\qquad z(x)\ge z_0(x),
    $$
    which gives us a monotonicity property of the stationary solutions with respect to the parameters $(\lambda,\mu)$.

    $iv)$ Consider now a fixed $\lambda$ and define
    \begin{equation*}
    \mu_\lambda^* = \sup\{ \mu : \text{a stationary solution exists for } (\lambda,\mu) \}.
    \end{equation*}
    Due to Lemma \ref{lema.globalesmenorque1}, we know that any stationary solution $(w,z)$ of system \eqref{1.1} with parameters $(\lambda,\mu)$ fulfills, for every $x\in\overline\Omega$:
    \begin{equation*}
        1 \geq w(x) > \mu^{1/q}, \;\;\; 1 \geq z(x) > \lambda^{1/p}.
    \end{equation*}
    Thanks to this fact and the monotonicity property proved in $iii)$,  it is clear that, for every $x\in\overline\Omega$:
    \begin{equation*}
        w_{\lambda,\mu_\lambda^*} (x) = \lim_{\mu\rightarrow \mu_\lambda^*} w_{\lambda,\mu} (x) > 0, \;\;\; z_{\lambda,\mu_\lambda^*} (x) = \lim_{\mu\rightarrow \mu_\lambda^*} z_{\lambda,\mu} (x) > 0.
    \end{equation*}
    Then taking the limit $\mu \to \mu_\lambda^\ast$ in the equations of \eqref{1.1}, we see that $(w_{\lambda,\mu_\lambda^*}, z_{\lambda,\mu_\lambda^*} )$ is a stationary solution of \eqref{1.1} with parameters $(\lambda, \mu^\ast_\lambda)$.

    We conclude that there exists a neighbourhood $U$ of $(0,0)$ in $\mathbb{R}^2$ such that there are stationary solutions if and only if $(\lambda, \mu) \in \overline{U} \cap ((0,\infty) \times (0,\infty))$.

    $v)$ Finally, since there are no stationary solutions for $(\lambda,\mu) \notin \overline{U} \cap ((0,\infty) \times (0,\infty))$, every solution quenches by Lemma \ref{lema.1quencheatodo}.
\end{proof}

\begin{rem}
    We do not know the full geometry of the set $U$ in general, only the properties we have outlined in Theorem \ref{teo.estacionarias}. We do not even know the behaviour of the stationary solutions when we approach the boundary of $U$, where some kind of bifurcation is expected. This is also an open problem in the particular case of the system with only one equation.
\end{rem}

\end{section}

\begin{section}{Simultaneous and Non-simultaneous quenching}

In this section we will prove Theorem~\ref{teo.simultaneo}. We consider two different cases: $\max\{p,q\}\ge 1$ and $\max\{p,q\}<1$.
For the first one, let $(u,v)$ be a solution of \eqref{1.1} that presents quenching at time $T<+\infty$. Multiplying the first equation by $\mu u^{-q}(x,t)$ and the second equation by $\lambda v^{-p}(x,t)$, we get
\begin{align*}
  \mu  u^{-q} u_t (x,t) &= \mu u^{-q} \left( J*u-u \right)(x,t) - \lambda \mu u^{-q} v^{-p}(x,t), \\
    \lambda v^{-p} v_t (x,t) &= \lambda v^{-p} \left( J*v-v\right)(x,t) - \lambda \mu u^{-q} v^{-p}(x,t),
\end{align*}
for every $(x,t)\in\overline\Omega \times[0,T)$. Therefore,
$$
\mu u^{-q} u_t (x,t)- \lambda v^{-p} v_t (x,t)= \mu u^{-q} \left( J*u-u \right)(x,t)- \lambda v^{-p} \left( J*v-v\right)(x,t),
$$
for every $(x,t)\in\overline\Omega \times [0,T)$. If we integrate the term on the left hand side between $0$ and $t\in[0,T)$, we get
$$
\begin{array}{rl}
\displaystyle\left|\int_0^t \Big(\mu u^{-q} u_t (x,s)- \lambda v^{-p} v_t (x,s)\Big) ds\right| \ge &
\left|(\mu \Psi_q[u]-\lambda\Psi_p[v])(x,t)\right|- \left|(\mu\Psi_q[u]-\lambda\Psi_p[v])(x,0)\right|\\
\ge & \left|(\mu\Psi_q[u]-\lambda\Psi_p[v])(x,t)\right|- D,
\end{array}
$$
for every $(x,t)\in\overline\Omega \times [0,T)$, where $D>0$ and $\Psi_a[g]$ is a primitive of the function $g^{-a}g_t(x,\cdot)$, that is,
\begin{equation}\label{primitiva}
\Psi_a[g](x,t)=\left\{
\begin{array}{ll}
\frac{g^{1-a}}{1-a}(x,t) & a\ne1\\
\ln |g(x,t)|\qquad & a=1.
\end{array}\right.
\end{equation}
Therefore, we know that
\begin{align*}
    \left|\mu\Psi_q[u](x,t)-\lambda\Psi_p[v](x,t)\right| &\leq D + \left|\int_0^t \Big(\mu u^{-q} u_t (x,s)- \lambda v^{-p} v_t (x,s)\Big) ds\right| \\
    &\leq D + \left|\int_0^t (\mu u^{-q} \left( J*u-u \right)(x,s)+ \lambda v^{-p} \left( J*v-v \right)(x,s))ds\right|,
\end{align*}
for every $(x,t)\in\overline\Omega \times [0,T)$. Moreover, we know that  $0<\delta\le \min\{u_0(x),v_0(x)\}\le C$ and Lemma \ref{lema.MNsuper} implies that $u(x,t)\le M$ and $v(x,t)\le N$ for every $(x,t) \in \overline\Omega \times [0,T)$. This gives us that
$$
\begin{array}{rl}
\displaystyle\left|\int_0^t \mu u^{-q} \left( J*u-u \right)(x,s)ds\right|\le &
\displaystyle2 M \int_0^t \mu u^{-q} (x,s)ds
= 2M \int_0^t (J*v-v-v_t)(x,s)ds\\
\le &\displaystyle 4MN (T+1),
\end{array}
$$
and
$$
\begin{array}{rl}
\displaystyle\left|\int_0^t \lambda v^{-p} \left( J*v-v\right)(x,s)ds\right|\le &
\displaystyle2 N \int_0^t \lambda v^{-p} (x,s)ds
= 2N \int_0^t (J*u-u-u_t)(x,s)ds\\
\le &\displaystyle 4MN (T+1),
\end{array}
$$
for every $(x,t)\in\overline\Omega \times [0,T)$. Finally,
\begin{equation}\label{des.clave}
\left|\mu \Psi_q[u](x,t)-\lambda\Psi_p[v](x,t)\right|\le D + 8MN(T+1) = \tilde{D},
\end{equation}
for every $(x,t)\in\overline\Omega \times [0,T)$. Notice that, for $p,q<1$, this estimate is empty because  the primitives $\Psi_q[u]$ and $\Psi_p[v]$ are bounded functions. However, for the case $\max\{p,q\}\ge1$, at least one of the primitives is unbounded and we obtain a lot of information from the inequality.

\begin{lema}\label{max(p,q)>=1}
Let $\max\{p,q\}\ge 1$ and $(u,v)$ be a solution of system \eqref{1.1} that presents quenching at time $T<+\infty$. Then,
\begin{enumerate}[i)]
\item If $p\ge 1>q$, then only the component $u$ presents quenching.
\item If $q\ge 1>p$, then only the component  $v$ presents quenching.
\item If $p, q\ge 1$, and for some sequence $(x_n,t_n)\to (x_0,T)$ we have that one of the components tends to zero, then the other component also tends to zero.
    Therefore, the quenching is always simultaneous and $Q(u)=Q(v)$.
\end{enumerate}
\end{lema}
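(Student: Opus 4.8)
The entire statement will follow from the single a priori bound \eqref{des.clave}, i.e.\ $|\mu\Psi_q[u](x,t)-\lambda\Psi_p[v](x,t)|\le\tilde D$ on $\overline\Omega\times[0,T)$, once it is combined with the dichotomy built into the definition \eqref{primitiva}: for an exponent $a<1$ the primitive $\Psi_a[g]=\frac{g^{1-a}}{1-a}$ stays bounded as $g\to0^+$ — in fact, since $1-a>0$ and $g$ is bounded above by Lemma \ref{lema.MNsuper}, $\Psi_a[g]$ is \emph{uniformly} bounded on $\overline\Omega\times[0,T)$ with no lower bound on $g$ needed — whereas for $a\ge1$ one has $\Psi_a[g]\to-\infty$ as $g\to0^+$ (logarithmically if $a=1$, like a negative power if $a>1$). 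So the plan is simply to feed this dichotomy into \eqref{des.clave}.

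For part~(i), with $q<1\le p$, I would first note that $0<\Psi_q[u]\le\frac{M^{1-q}}{1-q}$ on all of $\overline\Omega\times[0,T)$; then \eqref{des.clave} yields $\lambda|\Psi_p[v]|\le\tilde D+\frac{\mu M^{1-q}}{1-q}=:K$ there. It remains to convert this into a positive lower bound for $v$, distinguishing two cases: if $p=1$ then $\ln v\ge-K/\lambda$, so $v\ge e^{-K/\lambda}$; if $p>1$ then $\Psi_p[v]=-\frac{v^{1-p}}{p-1}$, so $\frac{v^{1-p}}{p-1}\le K/\lambda$, which rearranges to $v\ge\bigl(\frac{\lambda}{(p-1)K}\bigr)^{1/(p-1)}$. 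In either case $v\ge\alpha>0$ on $\overline\Omega\times[0,T)$ — which is exactly the bound asserted in Theorem~\ref{teo.simultaneo} — hence $Q(v)=\emptyset$; since $(u,v)$ quenches by hypothesis, the quenching must be due to $u$, so $Q(u)\ne\emptyset$ and only $u$ presents quenching. Part~(ii) is the mirror image, swapping the roles of $(u,p,\lambda,M)$ and $(v,q,\mu,N)$.

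For part~(iii), with $p,q\ge1$, I would take a sequence $(x_n,t_n)\to(x_0,T)$ along which one component, say $u$, tends to $0$. Then $\Psi_q[u](x_n,t_n)\to-\infty$ (the case $q=1$ through $\ln u\to-\infty$, the case $q>1$ through $\frac{u^{1-q}}{1-q}\to-\infty$). Inserting this into \eqref{des.clave} gives $\lambda\Psi_p[v](x_n,t_n)\le\mu\Psi_q[u](x_n,t_n)+\tilde D\to-\infty$, and for $p\ge1$ this is only possible if $v(x_n,t_n)\to0$ (again splitting $p=1$ from $p>1$). The symmetric argument shows that $v(x_n,t_n)\to0$ forces $u(x_n,t_n)\to0$. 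Hence any sequence witnessing one component tending to zero also witnesses the other, which gives $Q(u)=Q(v)$ at once; and since $Q((u,v))\subseteq Q(u)\cup Q(v)$ is nonempty (the solution quenches), both components reach the zero level, so the quenching is simultaneous.

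The heavy lifting has already been done in establishing \eqref{des.clave}, so I do not expect a genuine obstacle here; the remaining effort is careful bookkeeping of the signs in \eqref{des.clave} and treating the borderline exponents $p=1$ or $q=1$ (logarithmic $\Psi$) separately from $p>1$ or $q>1$ (negative-power $\Psi$). The one point that merits attention is that in part~(i) the bound on $\Psi_q[u]$ must hold uniformly on $\overline\Omega\times[0,T)$; this works because $1-q>0$ makes $\Psi_q[u]$ monotone increasing in $u$ and controlled purely by the upper bound $u\le M$ of Lemma~\ref{lema.MNsuper}, so no lower bound on $u$ — which is not available — is required.
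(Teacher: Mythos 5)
Your proposal is correct and follows essentially the same route as the paper: both arguments hinge on the inequality \eqref{des.clave} together with the observation that $\Psi_a[g]$ stays bounded as $g\to0^+$ when $a<1$ but diverges to $-\infty$ when $a\ge1$. The only cosmetic difference is that for parts (i)--(ii) the paper argues by contradiction along a hypothetical quenching sequence for $v$, whereas you extract the explicit uniform lower bound $v\ge\alpha>0$ directly from \eqref{des.clave}; this is slightly more informative (it is the bound stated in Theorem~\ref{teo.simultaneo}) but not a different method.
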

\begin{proof}
Let $p=\max\{p,q\}\ge1$ (the case $q=\max\{p,q\}\ge1$ is similar).

Assume first that  $p\ge1>q$ and suppose that $x_0\in Q(v)$. Then there exists a sequence $\{(x_n,t_n)\}_n$ such that $(x_n,t_n)\to (x_0,T)$ and $v(x_n,t_n)\to 0$ as $n\to\infty$. Then
$$
\Psi_p[v](x_n,t_n)\to -\infty,\qquad \Psi_q[u](x_n,t_n)\le C,
$$
because $q<1$ and thus the second primitive is bounded for any values of $u(x_n,t_n)$. This is a contradiction with the inequality \eqref{des.clave}. So, for $p\ge1>q$, the component $v$ cannot present quenching and the quenching is always non-simultaneous. The argument for the case $q\geq 1 > p$ is analogous.

Now assume $p,q\ge 1$ and take some $x_0\in Q(u,v)$. Then there exists a  sequence $\{(x_n,t_n)\}_n$ such that $(x_n,t_n)\to (x_0,T)$ and one of the components tends to zero as $n\to\infty$. Without loss of generality, $u(x_n,t_n)\to 0$ as $n\to\infty$. As $\Psi_q[u](x_n,t_n)$  is unbounded, inequality \eqref{des.clave} implies that $\Psi_p[v](x_n,t_n)$ is also unbounded.
Then, $v(x_n,t_n)\to 0$, the quenching is always simultaneous and $Q(u)=Q(v)$.
\end{proof}

\begin{cor}
    Let $(u,v)$ be a quenching solution of system $\eqref{1.1}$ that presents simultaneous quenching. Then either $p,q\geq 1$ or $p,q<1$.
\end{cor}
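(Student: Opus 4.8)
The plan is to observe that the corollary is an immediate consequence of Lemma~\ref{max(p,q)>=1}. Since $\lambda,\mu,p,q>0$, the pair $(p,q)$ lies in exactly one of the four regions $\{p,q\ge 1\}$, $\{p,q<1\}$, $\{p\ge 1>q\}$ or $\{q\ge 1>p\}$, and this case split is exhaustive. The statement to be proven asserts precisely that the last two (``mixed'') regions are incompatible with simultaneous quenching, so it suffices to discard them.

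I would argue by contradiction. Suppose $(u,v)$ presents simultaneous quenching at time $T<+\infty$ and that $(p,q)$ lies in a mixed region, say $p\ge 1>q$ (the case $q\ge 1>p$ is handled symmetrically, using part ii) of the lemma instead of part i)). Then $\max\{p,q\}=p\ge 1$, so Lemma~\ref{max(p,q)>=1}\,i) applies and yields that only the component $u$ presents quenching; that is, $v$ does not reach the zero level at time $T$, equivalently there is no sequence $(x_n,t_n)\to(x_0,T)$ along which $v(x_n,t_n)\to 0$. Hence the quenching is non-simultaneous, contradicting the hypothesis. Therefore $(p,q)\in\{p,q\ge 1\}$ or $(p,q)\in\{p,q<1\}$, which is the claim.

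There is essentially no obstacle here, since all the real work is already contained in the functional inequality \eqref{des.clave} and in Lemma~\ref{max(p,q)>=1}; the corollary only repackages its parts i) and ii) via the contrapositive. The only point worth making explicit is that the four-way partition of the parameter region $(0,\infty)\times(0,\infty)$ is complete, so ruling out the two mixed cases automatically forces $(p,q)$ into one of the two remaining ones.
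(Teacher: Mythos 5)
Your proposal is correct and matches the paper's intent exactly: the corollary is stated in the paper without a written proof precisely because it is the contrapositive of parts i) and ii) of Lemma \ref{max(p,q)>=1}, combined with the exhaustive four-way partition of the parameter quadrant that you spell out. Nothing further is needed.
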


Observe that estimate \eqref{des.clave} also gives us the following results.
\begin{lema}
Let $(u,v)$ be a solution of system \eqref{1.1} that presents quenching and $x_0\in Q(u,v)$. Then,
\begin{enumerate}[i)]
\item If only the component $u$ presents quenching at $x_0$, then $q<1$.
\item If only the component $v$ presents quenching at $x_0$, then $p<1$.
\end{enumerate}
\end{lema}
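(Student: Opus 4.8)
The plan is to derive both statements directly from the a priori bound \eqref{des.clave}, which is available here since $(u,v)$ presents quenching and the initial data are bounded away from $0$.

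First I would unwind the definitions. Since $x \in Q((u,v))$ forces $\min\{u,v\}\to 0$ along some sequence, one has $Q((u,v)) = Q(u) \cup Q(v)$; hence the hypothesis ``$x_0 \in Q((u,v))$ and only the component $u$ presents quenching at $x_0$'' means exactly $x_0 \in Q(u)$ and $x_0 \notin Q(v)$. From $x_0 \notin Q(v)$ I would first extract a uniform lower bound for $v$ near $(x_0,T)$: if for every $\rho>0$ we had $\inf\{v(x,t): x\in\overline\Omega,\ |x-x_0|<\rho,\ T-\rho<t<T\}=0$, a diagonal argument would yield a sequence $(y_n,s_n)\to(x_0,T)$ with $v(y_n,s_n)\to 0$, contradicting $x_0\notin Q(v)$. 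So there exist $\rho,\delta>0$ with $v(x,t)\ge\delta$ for all $x\in\overline\Omega$ with $|x-x_0|<\rho$ and all $t\in(T-\rho,T)$.

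Second, on this space-time neighbourhood the primitive $\Psi_p[v]$ is bounded: by Lemma \ref{lema.MNsuper} we also have $v\le N$, so $v$ ranges over the compact interval $[\delta,N]\subset(0,\infty)$ there, and the function $\Psi_p$ from \eqref{primitiva} — whether $p<1$, $p=1$ or $p>1$ — is continuous, hence bounded, on $[\delta,N]$. Third, using $x_0\in Q(u)$ pick a sequence $(x_n,t_n)\to(x_0,T)$ with $u(x_n,t_n)\to 0$; for $n$ large these points lie in the neighbourhood above, so $\Psi_p[v](x_n,t_n)$ stays bounded. Now suppose for contradiction that $q\ge 1$. Then $\Psi_q[u](x_n,t_n)\to-\infty$ (it is $\ln u(x_n,t_n)$ if $q=1$, and $u(x_n,t_n)^{1-q}/(1-q)$ with $1-q\le 0$ if $q>1$), so $|\mu\Psi_q[u](x_n,t_n)-\lambda\Psi_p[v](x_n,t_n)|\to+\infty$, contradicting \eqref{des.clave}. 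Therefore $q<1$, which is statement i). Statement ii) follows verbatim after exchanging the roles of $u$ and $v$, and of $q$ and $p$.

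The only mildly delicate point is the first step — converting the pointwise condition $x_0\notin Q(v)$ into a genuine uniform lower bound for $v$ on a full space-time neighbourhood of $(x_0,T)$ — but this is a routine compactness/diagonal argument; the remainder is a direct substitution into \eqref{des.clave} together with the monotonicity/continuity properties of $\Psi_a$ recorded in \eqref{primitiva}.
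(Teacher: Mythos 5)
Your proof is correct and follows exactly the route the paper intends: the lemma is stated as an immediate consequence of estimate \eqref{des.clave}, and your argument (boundedness of $\Psi_p[v]$ near $(x_0,T)$ versus unboundedness of $\Psi_q[u]$ along a quenching sequence if $q\ge 1$) is the same mechanism used in the proof of Lemma \ref{max(p,q)>=1}. The compactness/diagonal step converting $x_0\notin Q(v)$ into a uniform lower bound for $v$ on a space-time neighbourhood is a worthwhile detail that the paper leaves implicit, and you handle it correctly.
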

\begin{lema}\label{lema-estimaciones}
Let $p,q\ge1$ and $\{(x_n,t_n)\}_n$ a sequence such that both components present quenching as $n\to\infty$. Then,
\begin{enumerate}[i)]
\item for $p,q>1$, $u^{1-q}(x_n,t_n)\sim v^{1-p}(x_n,t_n)$;
\item for $p>1=q$, $-\log(u(x_n,t_n))\sim v^{1-p}(x_n,t_n)$;
\item for $q>1=p$, $u^{1-q}(x_n,t_n)\sim -\log(v(x_n,t_n))$;
\item for $p=q=1$, $u^{\mu}(x_n,t_n)\sim v^\lambda(x_n,t_n)$.
\end{enumerate}
\end{lema}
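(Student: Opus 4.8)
The plan is to read off all four estimates from the single inequality \eqref{des.clave}, namely
$$\left|\mu \Psi_q[u](x,t)-\lambda\Psi_p[v](x,t)\right|\le \tilde{D}\qquad\text{for all }(x,t)\in\overline\Omega\times[0,T),$$
by substituting the explicit primitives from \eqref{primitiva} in each regime and then using that, along the sequence $\{(x_n,t_n)\}_n$, the powers or logarithms of $u(x_n,t_n)$ and $v(x_n,t_n)$ that appear diverge because both components quench there.

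For case i), with $p,q>1$, I would note that $1-p,1-q<0$, so $\Psi_q[u]=\frac{u^{1-q}}{1-q}$ and $\Psi_p[v]=\frac{v^{1-p}}{1-p}$; setting $a=\frac{\mu}{q-1}>0$ and $b=\frac{\lambda}{p-1}>0$, \eqref{des.clave} becomes $\left|b\,v^{1-p}(x,t)-a\,u^{1-q}(x,t)\right|\le\tilde{D}$. Since both $u^{1-q}(x_n,t_n)$ and $v^{1-p}(x_n,t_n)$ tend to $+\infty$, dividing through by $a\,v^{1-p}(x_n,t_n)$ and taking $n$ large enough that $\tilde{D}/(a\,v^{1-p}(x_n,t_n))<\tfrac{b}{2a}$ yields $\tfrac{b}{2a}\le u^{1-q}(x_n,t_n)/v^{1-p}(x_n,t_n)\le\tfrac{2b}{a}$, i.e. $u^{1-q}(x_n,t_n)\sim v^{1-p}(x_n,t_n)$. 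Cases ii) and iii) run exactly the same way with one primitive replaced by a logarithm: for $p>1=q$ we have $\Psi_q[u]=\log u$ and $\Psi_p[v]=\frac{v^{1-p}}{1-p}$, so \eqref{des.clave} reads $\left|b\,v^{1-p}(x,t)-\mu\big(-\log u(x,t)\big)\right|\le\tilde{D}$ with $b=\frac{\lambda}{p-1}>0$; as both $-\log u(x_n,t_n)$ and $v^{1-p}(x_n,t_n)$ go to $+\infty$, the same division argument gives $-\log(u(x_n,t_n))\sim v^{1-p}(x_n,t_n)$, and case iii) is its mirror image. For case iv), with $p=q=1$, both primitives are logarithms, so \eqref{des.clave} becomes $\left|\log\big(u^{\mu}(x,t)/v^{\lambda}(x,t)\big)\right|\le\tilde{D}$, hence $e^{-\tilde{D}}\le u^{\mu}(x,t)/v^{\lambda}(x,t)\le e^{\tilde{D}}$ for all $(x,t)$; in particular $u^{\mu}(x_n,t_n)\sim v^{\lambda}(x_n,t_n)$ (indeed this comparison holds uniformly, not just along the sequence).

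There is no serious obstacle here; the only point that actually uses the hypothesis that \emph{both} components quench along $\{(x_n,t_n)\}_n$ is the step where the additive error $\tilde{D}$ is declared asymptotically negligible. This works precisely because simultaneous quenching forces the two competing quantities $u^{1-q}$ and $v^{1-p}$ (or their logarithmic substitutes) to blow up together; if only one of them diverged, \eqref{des.clave} would remain true but would yield no comparison, which is why the simultaneity assumption is essential.
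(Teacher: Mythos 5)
Your proposal is correct and is exactly the argument the paper intends: the paper states this lemma with no written proof beyond the remark that it follows from estimate \eqref{des.clave}, and your substitution of the explicit primitives \eqref{primitiva} followed by dividing out the divergent term is precisely that observation carried out in detail. Your closing remark correctly identifies that the simultaneity of the quenching along $\{(x_n,t_n)\}_n$ is what makes the additive constant $\tilde{D}$ negligible, which matches how the paper uses the same inequality in Lemma \ref{max(p,q)>=1}.
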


The case $p,q<1$ is more involved because the non-simultaneous and simultaneous quenching coexist. To prove that we use a shooting argument. Let $(u_0,v_0)$ be two functions such that
\begin{equation} \label{condicion.quenching.iv2.bis}
        \|u_0\|_\infty \leq \min \left\{ 1, \left( \frac{\mu}{2} \right)^{1/q} \right\}, \;\;\; \|v_0\|_\infty \leq \min \left\{ 1, \left( \frac{\lambda}{2} \right)^{1/p} \right\}.
    \end{equation}
and let $(u_\delta,v_\delta)$  be the solution of system \eqref{1.1} with initial data $(\delta u_0,(1-\delta)v_0)$ for $\delta\in(0,1)$. Notice that the hypotheses of Lemma \ref{condicionquenching} are satisfied for all $x\in\overline\Omega$ with $M=N=\varepsilon=1$. Then the solution presents quenching at finite time
\begin{equation}\label{Tdelta}
T_\delta \leq \min \left\{ \min_{x\in\overline\Omega}\delta u_0(x), \min_{x\in\overline\Omega} (1-\delta) v_0(x) \right\},
\end{equation}
and
\begin{equation}\label{eq.decrece}
(u_\delta)_t(x,t)\le -1,\qquad  (v_\delta)_t(x,t)\le -1,
\end{equation}
for every $(x,t)\in\overline\Omega \times [0,T_\delta)$. Integrating these inequalities between $t\in[0,T_\delta)$ and $T_\delta$,
\begin{equation}\label{eq.tasainf}
u_\delta (x,t)\ge (T_\delta-t), \qquad
v_\delta (x,t)\ge (T_\delta-t),
\end{equation}
for every $(x,t)\in\overline\Omega \times [0,T_\delta)$. Now, let us introduce the following disjoint sets:
\begin{equation*}
    \begin{array}{l}
        \displaystyle A^+=\{\delta\in(0,1)\,:\, \mbox{only the component $v_\delta$ presents quenching} \},\\
        \displaystyle A^-=\{\delta\in(0,1)\,:\, \mbox{only the component $u_\delta$ presents quenching}\},\\
        \displaystyle A=\{\delta\in(0,1)\,:\, \mbox{both components present quenching}\}.
    \end{array}
\end{equation*}
If we prove that both $A^{+}$ and $A^{-}$ are nonempty open sets, then it will follow from the connectedness of the interval $(0,1)$ that $A$ is a closed and nonempty subset of $(0,1)$. Therefore, in this case we would have some initial datum with simultaneous quenching taking $\delta\in A$ and initial data with non-simultaneous quenching taking $\delta \in A^+ \cup A^-$.

\begin{lema} \label{lema.sets.nonempty}
Let $p,q<1$. The sets $A^+$ and $A^-$ are nonempty.
\end{lema}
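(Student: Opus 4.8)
The plan is to exploit the balance between the two pieces of initial data: when $\delta$ is close to $0$ the component $u_\delta$ starts near $0$ and so quenches almost immediately, leaving $v_\delta$ no time to reach the zero level (hence $\delta\in A^-$), and symmetrically when $\delta$ is close to $1$ (hence $\delta\in A^+$). The quantitative engine is that $T_\delta\to 0$ at both endpoints, by \eqref{Tdelta}. First I would collect the facts already available for the family $(u_\delta,v_\delta)$: since $\|u_0\|_\infty,\|v_0\|_\infty\le 1$ we have $M=N=1$, so by \eqref{condicion.quenching.iv2.bis} the hypotheses of Lemma \ref{condicionquenching} hold at every point with $\varepsilon=1$; hence $(u_\delta,v_\delta)$ quenches at a finite time $T_\delta\le\min\{\delta\min_{\overline\Omega}u_0,(1-\delta)\min_{\overline\Omega}v_0\}$, the decay estimates \eqref{eq.decrece}--\eqref{eq.tasainf} hold, and by Lemma \ref{lema.MNsuper} we have $0<u_\delta,v_\delta\le 1$ on $\overline\Omega\times[0,T_\delta)$. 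In particular $-1\le (J*v_\delta-v_\delta)(x,t)\le 1$ there, and likewise with $u_\delta$.

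The main step is a uniform lower bound for $v_\delta$ valid for small $\delta$. Fixing $x\in\overline\Omega$ and $t\in[0,T_\delta)$, integrating the second equation of \eqref{1.1} in time and using $u_\delta(x,s)\ge T_\delta-s$ from \eqref{eq.tasainf} together with $|J*v_\delta-v_\delta|\le 1$, I get
\begin{align*}
v_\delta(x,t)&=(1-\delta)v_0(x)+\int_0^t\big(J*v_\delta-v_\delta-\mu u_\delta^{-q}\big)(x,s)\,ds\\
&\ge (1-\delta)\min_{\overline\Omega}v_0-T_\delta-\mu\int_0^t (T_\delta-s)^{-q}\,ds\ \ge\ (1-\delta)\min_{\overline\Omega}v_0-T_\delta-\frac{\mu}{1-q}\,T_\delta^{\,1-q},
\end{align*}
where the integrability of $(T_\delta-s)^{-q}$ is exactly where the hypothesis $q<1$ is used. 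Since $T_\delta\le\delta\min_{\overline\Omega}u_0\to 0$ as $\delta\to 0^+$, the right-hand side converges to $\min_{\overline\Omega}v_0>0$, so there is $\delta_*>0$ such that $v_\delta\ge\tfrac12\min_{\overline\Omega}v_0$ on $\overline\Omega\times[0,T_\delta)$ for every $\delta\in(0,\delta_*)$. Thus $v_\delta$ does not quench, and since $(u_\delta,v_\delta)$ does, it must be $u_\delta$ that reaches $0$; that is, $(0,\delta_*)\subset A^-$, so $A^-\neq\emptyset$.

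The argument for $A^+$ is the mirror image, using $p<1$ instead of $q<1$: integrating the first equation of \eqref{1.1} and using $v_\delta(x,s)\ge T_\delta-s$ one obtains, for all $(x,t)\in\overline\Omega\times[0,T_\delta)$,
$$
u_\delta(x,t)\ \ge\ \delta\min_{\overline\Omega}u_0-T_\delta-\frac{\lambda}{1-p}\,T_\delta^{\,1-p},
$$
and since now $T_\delta\le(1-\delta)\min_{\overline\Omega}v_0\to 0$ as $\delta\to 1^-$, this right-hand side tends to $\min_{\overline\Omega}u_0>0$; hence $u_\delta$ stays bounded away from $0$ for $\delta$ near $1$, so $u_\delta$ does not quench and some interval $(\delta^*,1)\subset A^+$.

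The only genuinely delicate point, and the reason $p,q<1$ appears in the hypothesis, is precisely that it makes the singular absorption term $\mu u^{-q}$ (resp. $\lambda v^{-p}$) integrable in time over a short interval even when the driving component is already near $0$; this is exactly what allows the "driven" component to fail to quench along with the other, and it is also why both simultaneous and non-simultaneous quenching can occur in this regime. Everything else is elementary integration of the equations combined with the a priori bounds \eqref{eq.tasainf} and Lemma \ref{lema.MNsuper}.
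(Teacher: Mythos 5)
Your proof is correct and follows essentially the same strategy as the paper: use the lower bound \eqref{eq.tasainf} on the quenching component to make the singular term integrable (this is where $p,q<1$ enters), and then exploit $T_\delta\to 0$ at the endpoints of $(0,1)$ to show the other component stays bounded away from zero. The only cosmetic difference is that the paper retains the $-u_\delta$ term and solves the differential inequality with the integrating factor $e^t$, whereas you bound the whole diffusion term by $1$ via Lemma \ref{lema.MNsuper} and integrate directly; both are valid.
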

\begin{proof}
Notice that  by \eqref{eq.tasainf}
$$
\begin{array}{rl}
(u_\delta)_t (x,t)= &\displaystyle\int_\Omega J(x-y) u_\delta (y,t) + \int_{\mathbb{R}^N \backslash\Omega} J(x-y) dy -u_\delta (x,t)-\lambda v_\delta^{-p} (x,t)\\
\ge&\displaystyle   -u_\delta (x,t) -\lambda (T_\delta -t)^{-p},
\end{array}
$$
for every $(x,t) \in \overline\Omega \times [0,T_\delta)$. Solving the differential inequality, we get
$$
e^t u_\delta(x,t)\ge  u_\delta(x,0)- \lambda\int_0^t e^s(T_\delta-s)^{-p}ds\ge
 \delta u_0(x)-\frac{\lambda}{1-p} e^{T_\delta} ( T_\delta^{1-p}-(T_\delta-t)^{1-p}),
$$
for every $(x,t) \in \overline\Omega \times [0,T_\delta)$. Therefore,
$$
\lim_{t\nearrow T_\delta} e^t u_\delta(x,t)\ge \delta u_0(x)- \frac{\lambda}{1-p} e^{T_\delta} T_\delta^{1-p},
$$
for every $x\in\overline\Omega$. Since $T_\delta\to 0$ as $\delta\to 1$, we can take $\delta$ sufficiently close to $1$ to get that
$$
\lim_{t\nearrow T_\delta} e^t u_\delta(x,t) \ge \frac\delta2 u_0(x)>0,
$$
for every $x\in\overline\Omega$. Therefore only the component $v_\delta$ quenches and the set $A^+$ is nonempty.

With the same argument, it is easy to see that taking $\delta$ sufficiently close to $0$ only the component $u$ quenches and the set $A^-$ is nonempty.
\end{proof}

To prove that $A^+$ and $A^-$ are open sets, we first need to see that the quenching time is continuous with respect to $\delta$.

\begin{lema}
    Let $p,q<1$ and consider $(u_\delta,v_\delta)$ the solution of system \eqref{1.1} with initial data $(\delta u_0,(1-\delta) v_0)$.  Then the quenching time $T_\delta$ of this solution is continuous with respect to $\delta$.
\end{lema}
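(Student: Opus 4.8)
The plan is to show that $T_\delta$ is both lower and upper semicontinuous at an arbitrary fixed $\delta_0\in(0,1)$. Two ingredients are used: continuous dependence of solutions of \eqref{1.1} on the initial data on any fixed compact time interval on which the reference solution exists (this follows from the Banach fixed point construction in the proof of Theorem \ref{prop.existe.general}, iterated step by step along the existence interval), and the uniform decay estimate \eqref{eq.decrece}, namely $(u_\delta)_t\le -1$ and $(v_\delta)_t\le -1$ on $\overline\Omega\times[0,T_\delta)$, which holds for every $\delta\in(0,1)$ because of \eqref{condicion.quenching.iv2.bis}. Recall also that by Lemma \ref{lema.MNsuper} one has $0<u_\delta,v_\delta\le 1$ on $\overline\Omega\times[0,T_\delta)$.

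For the lower semicontinuity, fix $0<\tau<T_{\delta_0}$. On the compact set $\overline\Omega\times[0,\tau]$ the solution $(u_{\delta_0},v_{\delta_0})$ is continuous and takes values in the open set $(0,\infty)^2$, hence is bounded below by some $m_0>0$; on the range $[m_0/2,\infty)$ the maps $v\mapsto\lambda v^{-p}$ and $u\mapsto\mu u^{-q}$ are Lipschitz. Continuous dependence then gives an $\eta>0$ such that for $|\delta-\delta_0|<\eta$ the solution $(u_\delta,v_\delta)$ exists on $[0,\tau]$, stays $\ge m_0/2>0$ there, and converges uniformly on $\overline\Omega\times[0,\tau]$ to $(u_{\delta_0},v_{\delta_0})$ as $\delta\to\delta_0$. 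In particular $T_\delta>\tau$ for $\delta$ near $\delta_0$, and since $\tau<T_{\delta_0}$ was arbitrary, $\liminf_{\delta\to\delta_0}T_\delta\ge T_{\delta_0}$.

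For the upper semicontinuity, fix $\varepsilon\in(0,T_{\delta_0})$. By the definition of quenching there exist $\tau\in(T_{\delta_0}-\varepsilon/2,T_{\delta_0})$ and $x^\ast\in\overline\Omega$ with, say, $u_{\delta_0}(x^\ast,\tau)<\varepsilon/4$ (if the small component is $v_{\delta_0}$, run the same argument with $v$ and the decay bound $(v_\delta)_t\le -1$). Applying the continuous dependence of the previous paragraph on $[0,\tau]$, for $\delta$ close enough to $\delta_0$ we have $T_\delta>\tau$ and $u_\delta(x^\ast,\tau)<\varepsilon/2$. Integrating $(u_\delta)_t\le -1$ from $\tau$ onward yields $u_\delta(x^\ast,t)\le u_\delta(x^\ast,\tau)-(t-\tau)<\varepsilon/2-(t-\tau)$ for $t\in[\tau,T_\delta)$; since a classical solution keeps $u_\delta(x^\ast,\cdot)$ strictly positive, this forces $T_\delta\le\tau+\varepsilon/2<T_{\delta_0}+\varepsilon$. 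Hence $\limsup_{\delta\to\delta_0}T_\delta\le T_{\delta_0}$, and combining the two inequalities gives $\lim_{\delta\to\delta_0}T_\delta=T_{\delta_0}$.

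The main obstacle is the continuous dependence of the solution on $\delta$ up to a time $\tau$ arbitrarily close to the quenching time $T_{\delta_0}$. The point that makes it work is that on each compact subinterval $[0,\tau]\subset[0,T_{\delta_0})$ the reference solution stays in a compact subset of $(0,\infty)^2$, so the singular nonlinearities are uniformly Lipschitz there and the fixed point / Gronwall estimate underlying Theorem \ref{prop.existe.general} applies with constants independent of $\delta$ near $\delta_0$. Once this is secured, the uniform decay rate \eqref{eq.decrece} prevents the quenching time from jumping upward, since a component that is already small at time $\tau$ must reach the zero level within a controlled additional time.
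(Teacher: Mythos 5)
Your proof is correct and follows essentially the same route as the paper: both arguments combine continuous dependence on the initial data over compact subintervals of $[0,T_{\delta_0})$ with the uniform decay bound \eqref{eq.decrece} (equivalently $u_\delta(x,t)\ge T_\delta-t$ from \eqref{eq.tasainf}) to control the quenching time. The only difference is organizational: you separate lower and upper semicontinuity, whereas the paper gets the two-sided bound $|T_\delta-T_{\widetilde\delta}|\le\varepsilon$ in one step by locating the time $t_0$ at which the minimum of all four components equals $\varepsilon/3$.
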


\begin{proof}
We take $\delta$ and $\widetilde\delta$ such that $|\delta-\widetilde\delta|\le\mu$ and define the function
\begin{equation*}
m(t) = \min \left\{ \min_{x\in\overline\Omega} u_{\delta}(x,t),  \min_{x\in\overline\Omega} v_{\delta}(x,t),
\min_{x\in\overline\Omega} u_{\widetilde\delta}(x,t),  \min_{x\in\overline\Omega} v_{\widetilde\delta}(x,t) \right\}
\end{equation*}
for $t\in (0,T_0)$ with $T_0=\min\{T_\delta,T_{\widetilde\delta}\}$.
Observe that $m(t)\to 0$ as $t\to T_0$ and from \eqref{eq.decrece} it is a decreasing function. So there exists a time $t_0$ such that $m(t_0)=\varepsilon/3$. Moreover assuming that  $m(t_0)=\min_{x\in\overline\Omega} u_{\delta}(x,t_0)=u_{\delta}(x_0,t_0)$, we use the continuity of the solutions with respect to the initial data in the system \ref{1.1} to obtain that $u_{\widetilde\delta}(x_0,t_0)\le 2\varepsilon/3$ provided that $\mu$ is small enough.
Finally, by \eqref{eq.tasainf} we have
$$
|T_\delta-T_{\widetilde\delta}|\le |T_\delta-t_0|+|T_{\widetilde\delta}-t_0|\le u_{\delta}(x_0,t_0)+u_{\widetilde\delta}(x_0,t_0)\le \varepsilon
$$
and the result follows.
\end{proof}

\begin{lema} \label{lema.sets.open}
    Let $p,q<1$. Then $A^+$  and $A^-$ are open sets.
\end{lema}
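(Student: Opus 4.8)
The plan is to prove that $A^+$ is open; the argument for $A^-$ is completely symmetric, exchanging the roles of $(u,p)$ and $(v,q)$. Fix $\delta_0\in A^+$. Since only $v_{\delta_0}$ quenches, the component $u_{\delta_0}$ does not, so
$$
\alpha:=\liminf_{t\nearrow T_{\delta_0}}\min_{x\in\overline\Omega}u_{\delta_0}(x,t)>0,
$$
and, because $t\mapsto u_{\delta_0}(x,t)$ is non-increasing by \eqref{eq.decrece}, in fact $u_{\delta_0}(x,t)\ge\alpha$ on all of $\overline\Omega\times[0,T_{\delta_0})$. I would then show that $u_\delta$ also stays uniformly away from $0$ for $\delta$ near $\delta_0$; this forces $v_\delta$ to be the only quenching component, hence $\delta\in A^+$. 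The two ingredients are the continuity of $\delta\mapsto T_\delta$ (the previous lemma) and the continuous dependence of the solution of \eqref{1.1} on its initial data, valid on time intervals compactly contained in $[0,T_{\delta_0})$.

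Concretely, since $p<1$ I would first fix $\sigma\in(0,T_{\delta_0}/2)$ so small that
$$
\frac{\lambda\,e^{2T_{\delta_0}}}{1-p}\,(2\sigma)^{1-p}<\frac{\alpha}{4}\,e^{T_{\delta_0}/2},
$$
and set $t_*=T_{\delta_0}-\sigma$. By continuity of $T_\delta$ there is $\rho_1>0$ with $|T_\delta-T_{\delta_0}|<\sigma/2$ when $|\delta-\delta_0|<\rho_1$; then $t_*<T_\delta$, $T_\delta-t_*<2\sigma$, $T_\delta<2T_{\delta_0}$ and $t_*>T_{\delta_0}/2$. On the compact set $\overline\Omega\times[0,t_*]$ the solution $(u_{\delta_0},v_{\delta_0})$ is bounded above by $1$ (Lemma \ref{lema.MNsuper}) and below by a positive constant, so continuous dependence on the initial data gives $\rho_2>0$ with $u_\delta(x,t_*)\ge\alpha/2$ for every $x\in\overline\Omega$ whenever $|\delta-\delta_0|<\rho_2$. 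Now for $|\delta-\delta_0|<\min\{\rho_1,\rho_2\}$, exactly as in the proof of Lemma \ref{lema.sets.nonempty} and using $v_\delta(x,t)\ge T_\delta-t$ from \eqref{eq.tasainf}, one has
$$
(u_\delta)_t(x,t)\ge -u_\delta(x,t)-\lambda(T_\delta-t)^{-p},\qquad (x,t)\in\overline\Omega\times[t_*,T_\delta);
$$
multiplying by $e^t$ and integrating from $t_*$ to $t$ then yields
$$
e^tu_\delta(x,t)\ge e^{t_*}u_\delta(x,t_*)-\lambda\int_{t_*}^te^s(T_\delta-s)^{-p}\,ds\ge\frac{\alpha}{2}e^{T_{\delta_0}/2}-\frac{\lambda\,e^{2T_{\delta_0}}}{1-p}(2\sigma)^{1-p}>\frac{\alpha}{4}e^{T_{\delta_0}/2}.
$$
Hence $u_\delta(x,t)\ge e^{-2T_{\delta_0}}\frac{\alpha}{4}e^{T_{\delta_0}/2}=:\alpha'>0$ on $\overline\Omega\times[t_*,T_\delta)$, while on $\overline\Omega\times[0,t_*]$ monotonicity gives $u_\delta(x,t)\ge u_\delta(x,t_*)\ge\alpha/2$.

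It then follows that $u_\delta\ge\min\{\alpha/2,\alpha'\}>0$ on $\overline\Omega\times[0,T_\delta)$, so $u_\delta$ does not quench; since the solution does quench at $T_\delta<+\infty$ (Lemma \ref{condicionquenching}), it is $v_\delta$ that quenches, i.e. $\delta\in A^+$, and therefore $A^+$ is open. For $A^-$ I would run the same argument starting from a uniform lower bound $\beta>0$ for $v_{\delta_0}$ and using $(v_\delta)_t(x,t)\ge -v_\delta(x,t)-\mu(T_\delta-t)^{-q}$, which relies on $u_\delta(x,t)\ge T_\delta-t$ and on $q<1$. I expect the main obstacle to be bookkeeping the interplay between the three approximations: continuous dependence on $\delta$ is only available on intervals away from the quenching time, so the lower bound must be obtained at $t_*$ and then propagated up to $T_\delta$ through the differential inequality, and the continuity of $T_\delta$ in $\delta$ is precisely what keeps the length $T_\delta-t_*$ of this final interval uniformly small.
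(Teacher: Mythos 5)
Your proposal is correct and follows essentially the same route as the paper: a uniform positive lower bound for $u_{\delta_0}$, continuity of $\delta\mapsto T_\delta$ to place a reference time $t_*$ just before both quenching times, continuous dependence on the initial data to transfer the lower bound to $u_\delta(\cdot,t_*)$, and integration of $(u_\delta)_t\ge -u_\delta-\lambda(T_\delta-t)^{-p}$ (using $v_\delta\ge T_\delta-t$ and $p<1$) over the short remaining interval. Your version merely fixes the smallness of $\sigma$ explicitly up front where the paper chooses $\varepsilon$ small at the end, so no substantive difference.
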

\begin{proof}
We only study $A^+$, the study of $A^-$ is similar. Notice that for $\delta_0 \in A^+$  there exists $c>0$ such that $u_{\delta_0}(x,t)\ge c$ for every $(x,t) \in \overline\Omega \times [0,T_{\delta_0})$.

Now for a fixed $\varepsilon>0$, we consider $\hat{t} \in (T_{\delta_0} - \varepsilon/2,T_{\delta_0}-\varepsilon/4)$.
By the continuity of the quenching time with respect to $\delta$ there exists some $\mu_1>0$ such that $|T_\delta - T_{\delta_0}| < \varepsilon/4$ provided $|\delta-\delta_0| < \mu_1$. Notice that $\hat t<T_\delta$, in fact
$$
T_\delta-\hat t\le|T_\delta-T_{\delta_0}|+|T_{\delta_0}-\hat t|\le \varepsilon.
$$
Notice that as $\hat t<T_\delta$, the function $u_\delta(x,\hat t)$ is well defined. Then due to
the continuous dependence with respect to the initial data, there exists $\mu_1 \geq \mu_2>0$ such that $u_\delta(x,\hat{t}) \geq c/2$ for every $x\in\overline\Omega$ if $|\delta-\delta_0| < \mu_2$.

Moreover, by \eqref{eq.tasainf},
$$
\begin{array}{rl}
(u_\delta)_t (x,t)= &\displaystyle\int_\Omega J(x-y) u_\delta (y,t) + \int_{\mathbb{R}^N \backslash\Omega} J(x-y) dy -u_\delta (x,t)-\lambda v_\delta^{-p} (x,t)\\
\ge&\displaystyle   -u_\delta (x,t) -\lambda (T_\delta -t)^{-p},
\end{array}
$$
for any $\delta \in (0,1)$ and every $(x,t)\in\overline\Omega \times [0,T_\delta)$. Now consider $\delta \in (\delta_0 - \mu_2, \delta_0 + \mu_2)$. Integrating between $\hat{t}$ and $ T_\delta$,
$$
\begin{array}{rl}
\displaystyle \lim_{t\nearrow {T}_\delta} e^t u_\delta(x,t)
&\displaystyle
\ge e^{\hat{t}} u_\delta(x,\hat{t})- \frac{\lambda}{1-p} e^{T_\delta}( T_\delta-\hat{t})^{1-p}
\ge e^{T_\delta}\Big(\frac{ce^{-\varepsilon}}{2} - \frac{\lambda}{1-p} \varepsilon^{1-p}\Big).
\end{array}
$$
Then taking $\varepsilon$ small enough we deduce that $u_\delta$ does not quench, that is, $(\delta_0-\mu_2,\delta_0+\mu_2)\subset A^+$ and the result follows.
\end{proof}

\begin{proof}[Proof of Theorem \ref{teo.simultaneo}]
    The result follows from Lemma \ref{max(p,q)>=1}, Lemma \ref{lema.sets.nonempty} and Lemma \ref{lema.sets.open}.
\end{proof}

\end{section}

\begin{section}{Quenching Rates}
To study the quenching rates in each region of the plane $(p,q)$, we will need to work with the quantities
$$
\min_{x \in \overline\Omega} u(\cdot,t) = u(x_u(t),t), \qquad \min_{x \in \overline\Omega} v(\cdot,t) = v(x_v(t),t).
$$
The following result proves that both quantities are differentiable for almost every time.
\begin{lema} \label{lema.minlipschitz}
    Let $u\in\mathcal{C}^1((0,T), \mathcal{C}(\overline\Omega))$, and define
    \begin{equation*}
        m(t) = \min_{x\in\overline\Omega} u(\cdot,t) = u(x (t),t).
    \end{equation*}
    Then $m(t)$ is differentiable and $m^\prime (t) := \partial_t (u(x(t),t)) = u_t (x(t),t)$ for almost every time $t\in(0,T)$.
\end{lema}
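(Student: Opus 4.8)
The plan is to show that $m$ is locally Lipschitz on $(0,T)$, deduce differentiability almost everywhere from the classical one–dimensional theorem on Lipschitz functions, and then pin down the value of $m'$ at each point of differentiability by a one–sided comparison in which the minimizing point is frozen.

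\emph{Step 1 (local Lipschitz continuity).} I would fix a compact interval $[a,b]\subset(0,T)$. Since $u\in\mathcal{C}^1((0,T),\mathcal{C}(\overline\Omega))$, the map $s\mapsto u_t(\cdot,s)$ is continuous into $\mathcal{C}(\overline\Omega)$, so $L:=\sup_{s\in[a,b]}\|u_t(\cdot,s)\|_\infty<\infty$. Writing $u(\cdot,t_2)-u(\cdot,t_1)=\int_{t_1}^{t_2}u_t(\cdot,s)\,ds$ as a (Bochner) integral in $\mathcal{C}(\overline\Omega)$ and evaluating at a point $x$ (a continuous functional, hence commuting with the integral) gives $|u(x,t_2)-u(x,t_1)|\le L|t_2-t_1|$ for all $x\in\overline\Omega$ and $t_1,t_2\in[a,b]$. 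Taking the minimum over $x$ one obtains
$$
m(t_2)\ \le\ u(x(t_1),t_2)\ \le\ u(x(t_1),t_1)+L|t_2-t_1|\ =\ m(t_1)+L|t_2-t_1|,
$$
and, exchanging $t_1$ and $t_2$, $|m(t_1)-m(t_2)|\le L|t_2-t_1|$. Hence $m$ is Lipschitz on every compact subinterval of $(0,T)$ and therefore differentiable at a.e.\ $t\in(0,T)$.

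\emph{Step 2 (identifying $m'$).} At a point $t_0\in(0,T)$ where $m$ is differentiable, I would fix any minimizer $x_0:=x(t_0)$ of $u(\cdot,t_0)$ and use that, for all small $h$,
$$
m(t_0+h)-m(t_0)\ \le\ u(x_0,t_0+h)-u(x_0,t_0)\ =\ \int_{t_0}^{t_0+h}u_t(x_0,s)\,ds .
$$
Dividing by $h>0$ and letting $h\to0^+$, using the continuity of $s\mapsto u_t(x_0,s)$, gives $m'(t_0)\le u_t(x_0,t_0)$; dividing the same inequality by $h<0$ reverses it, and letting $h\to0^-$ gives $m'(t_0)\ge u_t(x_0,t_0)$. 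Therefore $m'(t_0)=u_t(x_0,t_0)=u_t(x(t_0),t_0)$, which is the claim.

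The one point requiring care — and what I regard as the only real obstacle — is that the selection $t\mapsto x(t)$ need not be continuous or single–valued, so a direct chain rule for $t\mapsto u(x(t),t)$ is not available. The argument above circumvents this by comparing $m$ only with the single frozen profile $u(x_0,\cdot)$, for which the fundamental theorem of calculus holds pointwise in $x$. (As a by-product, the argument shows that $u_t(\cdot,t_0)$ takes the same value at all minimizers of $u(\cdot,t_0)$ whenever $m$ is differentiable at $t_0$, although this will not be needed in the sequel.) Only the hypothesis $u\in\mathcal{C}^1((0,T),\mathcal{C}(\overline\Omega))$ is used, which the components of solutions of \eqref{1.1} satisfy by Theorem \ref{teo.existe}.
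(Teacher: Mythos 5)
Your proposal is correct and follows essentially the same strategy as the paper: establish that $m$ is locally Lipschitz from the local boundedness of $u_t$, deduce differentiability almost everywhere, and identify $m'$ at points of differentiability via the comparison $m(t)\le u(x(s),t)$ with equality at $t=s$. The only difference is in the identification step, where the paper squeezes the difference quotient between $u_t(x(t),\xi_2)$ and $u_t(x(\tau),\xi_1)$ via the Mean Value Theorem and lets $\tau\to t$, whereas you freeze the single minimizer $x_0=x(t_0)$ and take the two one-sided limits; your variant is a clean (and arguably tidier) way to close the argument, since it avoids any issue with the behaviour of $x(\tau)$ as $\tau\to t$.
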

\begin{proof}
    Take $0<\tau<t<T$ and use the fact that $x(t)$ is the minimum of $u(x,t)$ at time $t$ to get that
    \begin{equation*}
        m(t)-m(\tau) = u(x (t),t) - u(x (\tau),\tau)
    \end{equation*}
    satisfies
    $$
    u(x (t),t) - u(x (t),\tau)\le m(t)-m(\tau) \leq u(x (\tau),t) - u(x(\tau),\tau).
    $$
    Then applying the Mean Value Theorem over $u$,
    \begin{equation} \label{ineq.minimum}
        u_t(x (t),\xi_2) \leq \frac{m(t)-m(\tau)}{t-\tau} \leq u_t(x (\tau),\xi_1),
    \end{equation}
    with $\xi_1,\xi_2\in[\tau,t]$. Since $u_t(x,\cdot)$ is locally bounded, we get that $m(\cdot)$ is locally Lipschitz. Therefore $m(\cdot)$ is differentiable for almost every time and passing to the limit as $\tau\to t$ in \eqref{ineq.minimum}:
    $$
m'(t):=  \partial_t (u(x (t),t)) = u_t (x (t),t).
    $$
\end{proof}

\subsection{Non-simultaneous quenching}
In this subsection we prove Theorem \ref{teo.tasas.nosimultanea}. Assume that $u$ is the component that presents quenching (the other case is similar), that is,
$$
\liminf_{t\to T} u(x_u(t),t)=0, \qquad
v(x,t)\ge \delta \text{   for } (x,t) \in \overline\Omega \times [0,T).
$$
By  Theorem \ref{teo.existe} we have that $\lim_{t\to T} u(x_u(t),t)=0$. Then there exists a sequence $t_n\to T$ such that
\begin{equation}\label{cero-nosimultaneo}
u(x_u (t_n),t_n)\to 0, \qquad
u_t(x_u (t_n),t_n)=-c_n<0.
\end{equation}
Moreover, $u_t(x,t)$ satisfies
\begin{equation} \label{ut.bounds}
-M-\lambda \delta^{-p} \le u_t (x,t) =J*u(x,t) -u(x,t) -\lambda v^{-p}(x,t) \le M,
\end{equation}
for every $(x,t)\in\overline\Omega \times [0,T)$. Thanks to Lemma \ref{lema.minlipschitz}, we can integrate the left inequality on \eqref{ut.bounds} between $t\in[0,T)$ and $T$ to obtain the upper quenching rate
\begin{equation} \label{upper.qrate.nonsim}
u(x_u(t),t)\le (M+\lambda\delta^{-p}) (T-t).
\end{equation}
Hence, given $\varepsilon>0$, we have
$$
u(x_u(t),t)\le \varepsilon\qquad \mbox{for } t\in(t_\varepsilon,T),
$$
where $t_\varepsilon$ is defined by  $(M+\lambda\delta^{-p}) (T-t_\varepsilon)=\varepsilon$. Moreover, taking $t_\varepsilon<s<t<\widetilde s<T$ we can integrate the left inequality in \eqref{ut.bounds} between $s$ and $t$ to get
$$
u(x_u(t),s)\le
u(x_u (t),t)+(M+\lambda\delta^{-p}) (t - s) \leq u(x_u(t),t) + (M+\lambda\delta^{-p}) (T - t_\varepsilon)=2\varepsilon
$$
and integrate the upper inequality between $t$ and $\widetilde s$ to get
$$
u(x_u(t),\widetilde s)\le u(x_u(t),t) + M (\widetilde s-t) \leq u(x_u(t),t) + (M+\lambda\delta^{-p}) (T - t_\varepsilon) =2\varepsilon.
$$
Therefore we conclude that
$$
u(x_u(t),\tau) \leq 2\varepsilon \qquad \mbox{for } t,\tau \in (t_\varepsilon,T).
$$

Now let us observe that for a fixed $x\in\overline\Omega$ we can differentiate the equation of $u$ to get
\begin{equation} \label{ineq.utt}
\begin{array}{rl}
u_{tt}(x,t)\leq &\displaystyle \int_\Omega J(x-y)u_t(y,t)dy-u_t(x,t)+\lambda p v^{-p-1}( J*v-\mu u^{-q})(x,t)\\
\le & C_1- C_2 u^{-q}(x,t),
\end{array}
\end{equation}
for every $(x,t)\in\overline\Omega \times [0,T)$.
Thus,
$$
u_{tt}(x(t),\tau)<0,\qquad \mbox{for } t,\tau \in(t_\varepsilon,T)
$$
provided that  $2\varepsilon<(C_2/C_1)^{1/q}$. On the other hand, taking $t_\varepsilon < \tau < t < T$, we can follow the proof of Lemma \ref{lema.minlipschitz} to get
$$
u_t(x_u (t),s_2) \leq \frac{u(x_u (t),t)-u
(x_u (\tau),\tau)}{t-\tau} \leq u_t(x_u (\tau),s_1),
$$
with $s_1,s_2\in[\tau,t]$. Now, using the fact that both $u_t(x_u(t),s)$ and $u_t(x_u(\tau),s)$ are strictly decreasing for $s\in(t_\varepsilon,T)$, we have
$$
u_t(x_u (t),t)<u_t(x_u (t),s_2)\le u_t(x_u (\tau),s_1)<u_t(x_u (\tau),\tau),
$$
Finally, by \eqref{cero-nosimultaneo}, we can take $\tau=t_n\in(t_\varepsilon,T)$ to arrive at
$$
u_t(x_u (t),t) < u_t(x_u (t_n),t_n) = -c_n,
$$
for every $t\in[t_n,T)$. By integration we obtain the lower quenching rate. \qed

\subsection{Simultaneous quenching}
In order to prove the simultaneous quenching rate, we need some relation between the two components of system \eqref{1.1}. Notice that estimate \eqref{des.clave} gives us this relation in the case $p,q\ge1$, see Lemma \ref{lema-estimaciones}. However, in the case $p,q<1$, the estimate \eqref{des.clave} does not give us any information because if $g(x_n,t_n)\to 0$ as $n\to\infty$, the function $\Psi_a[g](x_n,t_n)\to 0$ whenever $a<1$. Hence, to give the necessary relation  between the components in this second case we need to impose extra hypotheses in the initial data.

\begin{lema}\label{todoacero}
    Let $p,q\geq 1$ and $(u,v)$ be a solution of \eqref{1.1} that presents quenching at time $T<+\infty$. Then
    \begin{equation*}
        \lim_{t\nearrow T} u(x_u(t),t) =  \lim_{t\nearrow T} v(x_v(t),t) = 0.
    \end{equation*}
\end{lema}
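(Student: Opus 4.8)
The plan is to deduce the statement from the honest joint limit already established in Theorem \ref{teo.existe}, together with the rigidity encoded in the coupling estimate \eqref{des.clave} (equivalently, Lemma \ref{max(p,q)>=1} iii)), which is available precisely because $p,q\ge 1$. First I would record that Theorem \ref{teo.existe} gives the genuine limit
$$
\lim_{t\nearrow T}\min\Big\{\min_{\overline\Omega}u(\cdot,t),\ \min_{\overline\Omega}v(\cdot,t)\Big\}=0,
$$
so it suffices to prove $\lim_{t\nearrow T}u(x_u(t),t)=0$, the argument for $v$ being symmetric. Since $u(x_u(t),t)=\min_{\overline\Omega}u(\cdot,t)\ge 0$, this reduces to ruling out $\limsup_{t\nearrow T}u(x_u(t),t)>0$.

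The core of the proof is a contradiction argument. Suppose there are $a>0$ and times $\sigma_m\nearrow T$ with $u(x_u(\sigma_m),\sigma_m)\ge a$. Writing $\varepsilon_m:=\min\{\min_{\overline\Omega}u(\cdot,\sigma_m),\min_{\overline\Omega}v(\cdot,\sigma_m)\}$, the displayed limit forces $\varepsilon_m\to 0$; for $m$ large, $\varepsilon_m<a\le\min_{\overline\Omega}u(\cdot,\sigma_m)$, hence necessarily $\varepsilon_m=v(x_v(\sigma_m),\sigma_m)\to 0$. By compactness of $\overline\Omega$ I would pass to a subsequence with $x_v(\sigma_m)\to x_0\in\overline\Omega$, so that $(x_v(\sigma_m),\sigma_m)\to(x_0,T)$ and $v(x_v(\sigma_m),\sigma_m)\to 0$. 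Then Lemma \ref{max(p,q)>=1} iii) (here is where $p,q\ge 1$ enters) forces $u(x_v(\sigma_m),\sigma_m)\to 0$ as well; but $u(x_v(\sigma_m),\sigma_m)\ge\min_{\overline\Omega}u(\cdot,\sigma_m)\ge a>0$, a contradiction. Hence $\limsup_{t\nearrow T}u(x_u(t),t)=0$, and exchanging the roles of $u$ and $v$ gives $\lim_{t\nearrow T}v(x_v(t),t)=0$.

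The one genuinely delicate point — and the step I expect to be the main obstacle — is the upgrade from the joint limit \eqref{eq-m}, which a priori could be realised by the two components ``taking turns'' at being small, to a separate limit for each of the two minima. This is exactly where the coupling estimate \eqref{des.clave} is indispensable (through Lemma \ref{max(p,q)>=1} iii)), and it is also the reason the hypothesis is restricted to $p,q\ge 1$: for $p,q<1$ the primitives $\Psi_p[v]$ and $\Psi_q[u]$ remain bounded near the zero level, so \eqref{des.clave} is vacuous and the conclusion can genuinely fail.
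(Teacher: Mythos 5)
Your argument is correct and is essentially the paper's own proof: both assume $\limsup_{t\nearrow T}u(x_u(t),t)>0$ along a sequence, observe that $u(x_v(t_n),t_n)$ then stays bounded below, invoke Lemma \ref{max(p,q)>=1} iii) at the points $x_v(t_n)$, and contradict the limit \eqref{eq-m} from Theorem \ref{teo.existe}. The only difference is the order in which the two ingredients are played off against each other, which does not change the substance.
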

\begin{proof}
By Lemma \ref{max(p,q)>=1} we get that the quenching is simultaneous and
$$
\liminf_{t\nearrow T} u(x_u(t),t)=0,\qquad \liminf_{t\nearrow T} v(x_v(t),t)=0.
$$
Now, arguing by contradiction we suppose that $\limsup u(x_u(t),t)=2c>0$. Then there exists a sequence $\{t_n\}_n$ with $t_n \xrightarrow{n\to\infty} T$ and such that $u(x_u(t_n),t_n)\ge  c>0$ for every $n\in\mathbb{N}$. Clearly, $u(x_v(t_n),t_n)\ge u(x_u(t_n),t_n)\ge  c$ for every $n\in\mathbb{N}$ and using Lemma \ref{max(p,q)>=1} we get that $v(x_v(t_n),t_n)\ge  \widetilde c>0$. Therefore,
$$
\min \left\{ \min_{x\in\overline\Omega} u(x,t_n), \min_{x\in\overline\Omega} v(x,t_n)\right\} \ge \min\{c,\widetilde c\}>0,
$$
for every $n\in\mathbb{N}$, which is a contradiction with Theorem \ref{teo.existe}.
\end{proof}

\begin{proof}[Proof of Theorem \ref{lema.qrate.sim}]
Observe that the absorption terms in system \eqref{1.1} are unbounded whenever $u$ or $v$ present quenching while the diffusion terms are always bounded, so there exists a time $t_0\in[0,T)$ such that
\begin{equation} \label{equiv.ut}
    u_t(x_u(t),t) \sim - v^{-p} (x_u(t),t), \qquad
    v_t(x_v(t),t) \sim - u^{-q} (x_v(t),t)
\end{equation}
for every $t\in[t_0,T)$.

 $i)$ Assume that $p,q > 1$. From Lemma \ref{lema-estimaciones} we know that there exists $t_1\in [t_0,T)$ such that
\begin{equation}\label{equiv.uv1}
    u^{1-q} (x_u (t),t) \sim v^{1-p} (x_u (t),t),\qquad
    u^{1-q} (x_v (t),t) \sim v^{1-p} (x_v (t),t)
\end{equation}
for every $t\in[t_1,T)$. To study the behaviour of the solutions along the sequence $(x_u(t),t)$ we use the first equivalence in \eqref{equiv.ut}, to get
\begin{equation*}
    u_t(x_u(t),t) \sim - u^{\frac{pq-p}{1-p}}(x_u(t),t),
\end{equation*}
Note that, thanks to Lemma \ref{lema.minlipschitz}, we know that $u_t(x_u(t),t) = \partial_t (u(x_u(t),t))$. Then we can integrate this equivalence between $t\in[t_1,T)$ and $T$ and arrive to the desired quenching rate
\begin{equation*}
    u(x_u(t),t) \sim (T-t)^{\frac{p-1}{pq-1}}.
\end{equation*}
The quenching rate  for the component $v$ is given by \eqref{equiv.uv1}.
\begin{equation*}
    v(x_u(t),t) \sim (T-t)^{\frac{q-1}{pq-1}}.
\end{equation*}

To consider the behaviour of the solution along the sequence $(x_v(t),t)$ we observe that
$$
v(x_v(t),t)\le v(x_u(t),t)\le C (T-t)^{\frac{q-1}{pq-1}},
$$
and
$$
C (T-t)^{\frac{p-1}{pq-1}}\le u(x_u(t),t) \le u(x_v(t),t)\le C v^{\frac{p-1}{q-1}}(x_v(t),t)
$$
for every $t\in[t_1,T)$, which gives us the desired quenching rate
\begin{equation*}
    v(x_v(t),t) \sim (T-t)^{\frac{q-1}{pq-1}}.
\end{equation*}
As before, the quenching rate  for the component $u$ is given by \eqref{equiv.uv1}.

$ii)$ Assume now that $p> 1= q$ and consider the behaviour of the solution along the sequence $(x_u(t),t)$ (the quenching rate along $(x_u(t),t)$ gives us the quenching rate along $(x_v(t),t)$ with the same reasoning as before).

In this case, Lemma \ref{lema-estimaciones} gives us that there exists $t_2\in [t_0,T)$ such that
\begin{equation}\label{equiv.uv2}
    v^{1-p} (x_u(t),t) \sim -\log u(x_u(t),t) = \log \left(\frac{1}{u(x_u(t),t)} \right)
\end{equation}
for every $t\in[t_2,T)$.  Then by \eqref{equiv.ut},
\begin{equation}\label{eq.log}
  u_t(x_u(t),t) \sim - \left(\log \frac1{u(x_u(t),t)}\right)^{\frac{p}{p-1}}.
\end{equation}
Integrating this expression between $t\in[t_2,T)$ and $T$, the rate is given implicitly by
\begin{equation} \label{integral.rate}
\int_{u(x_u(t),t)}^0 \left( \log \left( \frac1{s} \right) \right)^{\frac{p}{1-p}}\,ds\sim -(T-t).
\end{equation}
We can then apply the L'H\^opital rule over the following limit to get
\begin{equation}\label{limitelog}
    \lim_{t\nearrow T} \frac{\int^{u(x_u(t),t)}_{0} \log (1/s)^{\frac{p}{1-p}} ds}{ u(x_u(t),t) \log (1/u(x_u(t),t))^{\frac{p}{1-p}}}=1.
\end{equation}
Therefore, using both \eqref{integral.rate} and \eqref{limitelog}, there exists $t_3\in[t_2,T)$ such that
$$
\log \left(\frac{1}{u(x_u(t),t)}\right)^{\frac{p}{p-1}}\sim \frac{u(x_u(t),t)}{(T-t)}
$$
for every $t\in[t_3,T)$. Putting these estimates in \eqref{eq.log}
\begin{equation*}
    u_t(x_u(t),t) u^{-1}(x_u(t),t) \sim -(T-t)^{-1},
\end{equation*}
expression that we can integrate between $t_3$ and $t \in [t_3,T)$ to obtain
\begin{equation} \label{equiv.logu.logT}
    \log u(x_u(t),t) \sim \log(T-t),
\end{equation}
for every $t\in[t_3,T)$. Using again this estimate in \eqref{eq.log} we get
\begin{equation*}
    u_t(x_u(t),t) \sim -(-\log (T-t))^{\frac{p}{p-1}}.
\end{equation*}
Then,
$$
u(x_u(t),t)\sim \int_t^T (-\log(T-s))^{\frac{-p}{1-p}} ds
= \int_{-\log(T-t)}^\infty \tau^{\frac{-p}{1-p}} e^{-\tau} d\tau.
$$
That is, $u(x_u(t),t)$ behaves like the upper incomplete Gamma function with the following parameters:
\begin{equation*}
    u(x_u(t),t) \sim \Gamma\left( \frac{1-2p}{1-p}, -\log (T-t) \right).
\end{equation*}
Using the asymptotic properties of the upper incomplete Gamma function, we know that:
\begin{equation*}
    \frac{\Gamma\left( \frac{1-2p}{1-p}, -\log (T-t) \right)}{(-\log (T-t))^{-p/(1-p)}(T-t)} \xrightarrow{t\rightarrow T} 1,
\end{equation*}
and the quenching rate for $u$ follows. The quenching rate for the $v$ component follows immediately from
\eqref{equiv.logu.logT} and  \eqref{equiv.uv2}.

$iii)$ The proof for this case is analogous to that of point $ii)$.

$iv)$ The proof for this case is analogous to that of point $i)$.
\end{proof}

\begin{proof}[Proof of Theorem \ref{lema.qrate.sim.menor1}]
    Since the quenching is simultaneous,
    $$
    \liminf_{t\to T} u(x(t),t)=0, \qquad
    \liminf_{t\to T} v(x(t),t)=0.
    $$
    Suppose then that $\limsup_{t\to T} u(x(t),t)=c>0$. Since, for $t$ close to $T$, the function $u(x(\cdot),\cdot)$ oscillates between $(0,c)$, there exists a sequence of times $t_n\to T$ such that
    $$
    u(x(t_n),t_n)= c/2,\qquad
    u_t(x(t_n),t_n)\ge0.
    $$
    This is a contradiction because, from Theorem \ref{teo.existe}, we would have that $v(x(t_n),t_n)\to 0$ which implies that $u_t(x(t_n),t_n)\to -\infty$. The same happens if we suppose that $\limsup_{t\to T} v(x(t),t)=c>0$. Then we have that
    $$
    \lim_{t\to T} u(x(t),t)=0, \qquad
    \lim_{t\to T} v(x(t),t)=0.
    $$

    Now, we note that
    $$
    \lim_{t\nearrow T} \frac{v^pu_t}{u^qv_t}(x(t),t) =
    \lim_{t\nearrow T} \frac{v^p (J*u-u)-\lambda}{u^q(J*v-v)-\mu}(x(t),t) =
    \frac{\lambda}{\mu}.
    $$
    This implies that there exists $t_2\in[t_0,T)$ such that
    $$
    u^{-q}u_t(x(t),t)\sim v^{-p}v_t(x(t),t).
    $$
    Integrating between $t\in[t_2,T)$ and $T$ we get the estimate
    $$
    u^{1-q}(x(t),t)\sim v^{1-p}(x(t),t).
    $$
    At this point we can follow the same argument as in the case $p,q>1$ in Theorem \ref{lema.qrate.sim} to obtain the quenching rate.
\end{proof}

\end{section}
\begin{section}{Numerical simulations}

In this section we illustrate numerically some of the properties proved before for problem \eqref{1.1}. We take a uniform partition of size $h=4/(N+1)$ of $\Omega=[-2,2]$, that is, $-2=x_{-N}<\cdots<x_0=0<\cdots<x_N=2$. We consider the semidiscrete  approximation $(u_i(t),v_i(t))\sim (u(x_i,t),v(x_i,t))$ which solves the following ODE system
$$
u_i'=h\sum_{j=-N}^N J(x_i-x_j) u_j+\Big(1-h\sum_{j=-N}^N J(x_i-x_j) \Big)-u_i-\lambda v_i^{-p},
$$
$$
v_i'=h\sum_{j=-N}^N J(x_i-x_j) v_j+\Big(1-h\sum_{j=-N}^N J(x_i-x_j) \Big)-v_i-\mu u_i^{-q},
$$
where we have chosen $J(x) = \frac{3}{4}(1-x^2)_+$ as the kernel, and we will take multiple sets of parameters $\lambda,\mu,p,q>0$. To perform the integration in time we use an adaptive ODE solver for stiff problems (Matlab ode23s).
In all cases the blue color represents the $u$ component and the red color the $v$ component. We take $N=100$ and $u_i(0)=1=v_i(0)$.

In Figure \ref{estacionarias} we look for the existence of stationary solutions, see Theorem \ref{teo.estacionarias}. Notice that for $\lambda=0.001=\mu$ the solution stabilizes to a stationary solution while for $\lambda=0.1=\mu$ we have simultaneous quenching at time $T=9.0619$ and the only quenching point is the origin.

\begin{figure}[ht] \centering
\includegraphics[width=7.2cm]{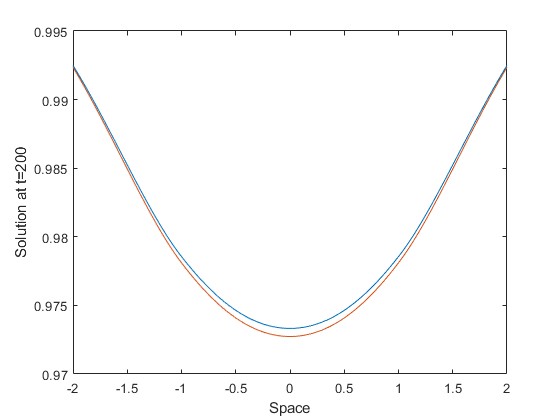}
\qquad
\includegraphics[width=7.2cm]{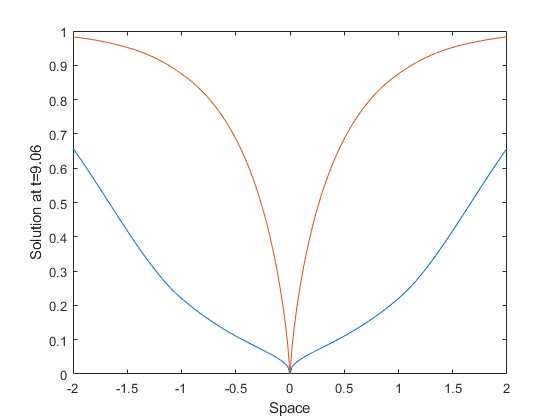}
\caption{$p=2$ and $q=3$. To the left, $\lambda=0.001=\mu$. To the right, $\lambda=0.1,\mu = 0.001$.}
\label{estacionarias}
\end{figure}

In order to see the quenching rate we note that, by Theorem \ref{lema.qrate.sim},
$$
u(0,t)\sim (T-t)^{\frac{p-1}{pq-1}}=(T-t)^{\frac15},\qquad
v(0,t)\sim (T-t)^{\frac{q-1}{pq-1}}=(T-t)^{\frac25}.
$$
In Figure \ref{tasa_simultanea} we represent the $(-\log(u(0,t)),-\log(v(0,t)))$ versus $-\log(T-t)$ next to dashed lines of slopes $1/5$ and $2/5$.

\begin{figure}[H] \centering
\includegraphics[width=7.2cm]{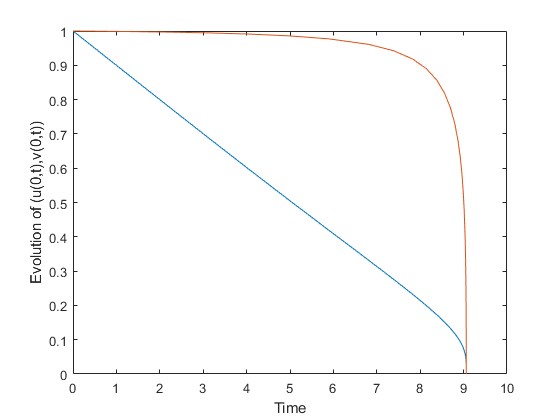}
\qquad
\includegraphics[width=7.2cm]{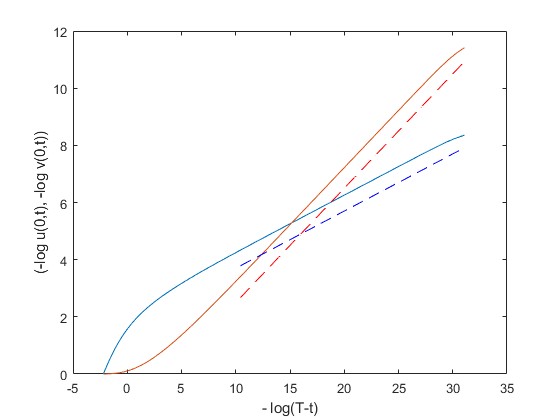}
\caption{$\lambda=0.1, \mu = 0.001$, $p=2$ and $q=3$. To the left, the evolution of $(u(0,t),v(0,t))$. To the right, the simultaneous quenching rate.}
\label{tasa_simultanea}
\end{figure}

Finally, we look for the non-simultaneous quenching. Taking $p=2$ and $q=0.7$, Theorems \ref{teo.simultaneo} and \ref{teo.tasas.nosimultanea} say that only the $u$ component presents quenching and $u(0,t)\sim (T-t)$. Similarly, taking $p=0.2$ and $q=3$, only the $v$ component presents quenching and $v(0,t) \sim (T-t)$. This is also the case of our approximation, see Figures \ref{pmayor1} and \ref{pmayor2}.

\begin{figure}[ht] \centering
\includegraphics[width=7.2cm]{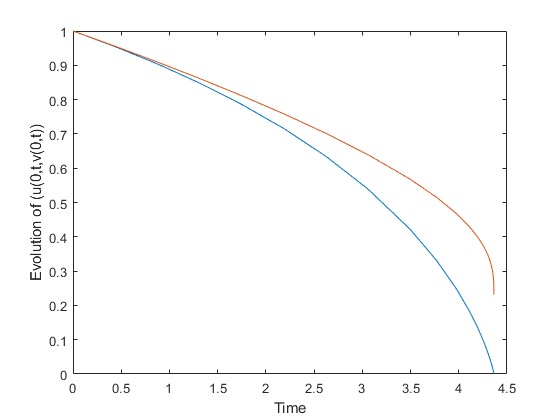}
\qquad
\includegraphics[width=6.8cm]{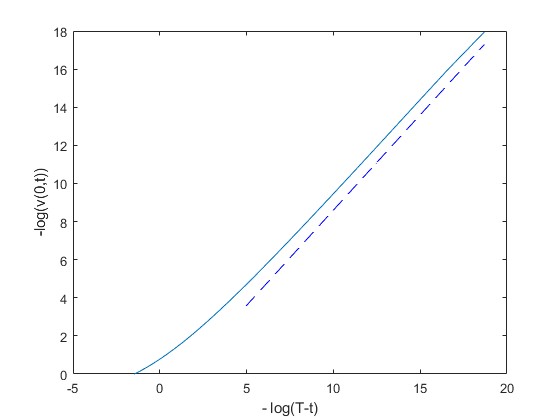}
\caption{$\lambda=0.1=\mu$, $p=2$ and $q=0.7$. To the left, the evolution of $(u(0,t),v(0,t))$. To the right, the quenching rate.}
\label{pmayor1}
\end{figure}

\begin{figure}[ht] \centering
\includegraphics[width=7.2cm]{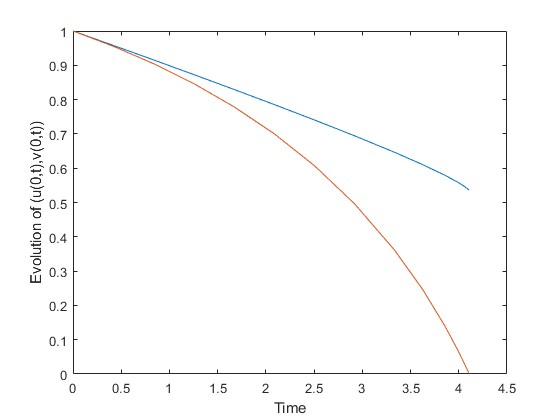}
\qquad
\includegraphics[width=7.2cm]{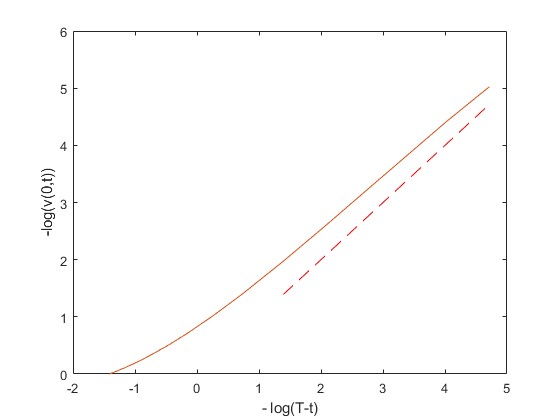}
\caption{$\lambda=0.1=\mu$, $p=0.2$ and $q=3$. To the left, the evolution of $(u(0,t),v(0,t))$. To the right, the quenching rate.}
\label{pmayor2}
\end{figure}

\end{section}

\newpage

\centerline{{\bf Acknowledgements}}

J. M. Arrieta is partially supported by grants  PID2022-137074NBI00 and CEX2023-001347-S “Severo Ochoa Programme for Centres of Excellence in R\&D”, the two of them funded by 
MCIU/AEI/10.13039/501100011033, Spain.  Also by “Grupo de Investigación 920894 - CADEDIF”, UCM, Spain.

R. Ferreira is partially supported by the Spanish project PID2023-146931NB-I00 funded by the MICIN/AEI and by “Grupo de Investigación 920894 - CADEDIF”, UCM, Spain.

S. Junquera is  partially supported by grant  PID2022-137074NBI00 funded by \linebreak MCIU/AEI/10.13039/501100011033, Spain and by the FPU21/05580 grant from the Ministry of Science, Innovation and Universities.

\vspace{10mm}

\centerline{{\bf Disclosure of interest}}

The authors report there are no competing interests to declare.

\newpage

\section*{Figures}

\setcounter{figure}{0}

\begin{figure}[ht] \centering
\includegraphics[width=7.7cm]{figure_1_1}
\qquad
\includegraphics[width=7.7cm]{figure_1_2}
\caption{$p=2$ and $q=3$. To the left, $\lambda=0.001=\mu$. To the right, $\lambda=0.1,\mu = 0.001$.}
\end{figure}

\begin{figure}[ht] \centering
\includegraphics[width=7.7cm]{figure_2_1}
\qquad
\includegraphics[width=7.7cm]{figure_2_2}
\caption{$\lambda=0.1, \mu = 0.001$, $p=2$ and $q=3$. To the left, the evolution of $(u(0,t),v(0,t))$. To the right, the simultaneous quenching rate.}
\end{figure}

\begin{figure}[ht] \centering
\includegraphics[width=7.7cm]{figure_3_1}
\qquad
\includegraphics[width=7.7cm]{figure_3_2}
\caption{$\lambda=0.1=\mu$, $p=2$ and $q=0.7$. To the left, the evolution of $(u(0,t),v(0,t))$. To the right, the quenching rate.}
\end{figure}

\begin{figure}[ht] \centering
\includegraphics[width=7.7cm]{figure_4_1}
\qquad
\includegraphics[width=7.7cm]{figure_4_2}
\caption{$\lambda=0.1=\mu$, $p=0.2$ and $q=3$. To the left, the evolution of $(u(0,t),v(0,t))$. To the right, the quenching rate.}
\end{figure}


\begin{thebibliography}{LGMW}

\bibitem{AMRT} Andreu, F.; Maz\'on, J. M.; Rossi, J. D. and Toledo, J. (2010). {Nonlocal diffusion problems}, {\it J. Mathematical Surveys and Monographs}, 165, Amer. Math. Soc., Providence, RI, 2010 R. Soc. Mat. Esp., Madrid; MR2722295.


\bibitem{BFRW} Bates, P.; Fife, P.; Ren, X. and Wang, X. (1997).
{Traveling waves in a convolution model for phase
transitions.} {\it Arch. Ration. Mech. Anal. 138},
105-136. DOI: \url{https://doi.org/10.1007/s002050050037}.


\bibitem{BV} Bocur, C. and Valdinoci, E. (2016). {Nonlocal Diffusion and Applications}. {\it Lecture Notes of the Unione Matematica Italiana, 20}, Springer, [Cham], 2016 Unione Mat. Ital., Bologna, 2016; MR3469920. DOI: \url{https://doi.org/10.1007/978-3-319-28739-3}.

\bibitem{Fi} Fife, P. (2003).
{Some nonclassical trends in parabolic and parabolic-like
evolutions}, in {\it Trends in nonlinear analysis}, 153--191, Springer, Berlin, ; MR1999098. DOI: \url{https://doi.org/10.1007/978-3-662-05281-5_3}.

\bibitem{Ch} Chasseigne, E. (2007).
{The Dirichlet problem for some
nonlocal diffusion equations}, {\it Differential Integral Equations 20}, no.~12, 1389--1404; MR2377023. DOI: \url{ https://doi.org/10.57262/die/1356039071}.

\bibitem{ChChR} Chasseigne, E.; Chaves, M. and Rossi, J. D. (2006).
{Asymptotic behavior for nonlocal diffusion equations}, {\it J. Math. Pures Appl. (9) 86}, no.~3, 271--291; MR2257732. DOI: \url{https://doi.org/10.1016/j.matpur.2006.04.005}.

\bibitem{R} Rosen, G. (1979).
{Nonlinear heat conduction in solid $H_2$.} {\it Phys. Review B,
19}, 2398--2399. DOI: \url{https://doi.org/10.1103/physrevb.19.2398}.

\bibitem{CK} Chan, C. Y. and Kong, P.C. (1995).
{A Thermal Explosion Model.}, {\it Appl. Math. Comput. 71}, no.~2-3, 201--210; MR1343330. DOI: \url{https://doi.org/10.1016/0096-3003(94)00154-V}.

\bibitem{K} Kawarada, H. (1975).
{On solutions of initial-boundary problem for
$u\sb{t}=u\sb{xx}+1/(1-u)$}, {\it Publ. Res. Inst. Math. Sci. 10}, no.~3, 729--736; MR0385328. DOI: \url{https://doi.org/10.2977/prims/1195191889}.

\bibitem{C} Chan, C. Y. (1995).
{Recent advances in quenching phenomena.}, in {\it Proceedings of Dynamic Systems and Applications, Vol.\ 2 (Atlanta, GA, 1995)}, 107--113, Dynamic, Atlanta, GA; MR1419518

\bibitem{FL} Fila, M. and Levine, H. A. (1993).
{Quenching on the boundary}, {\it Nonlinear Anal. 21}, no.~10, 795--802; MR1246508. DOI: \url{https://doi.org/10.1016/0362-546x(93)90124-b}.

\bibitem{L1} Levine, H. A. (1984).
{The phenomenon of quenching: a survey}, in {\it Trends in the theory and practice of nonlinear analysis (Arlington, Tex., 1984)}, 275--286, North-Holland Math. Stud., 110, North-Holland, Amsterdam, ; MR0817500. DOI: \url{https://doi.org/10.1016/s0304-0208(08)72720-8}.

\bibitem{L2} Levine, H. A. (1989).
{Quenching, Nonquenching, and Beyond Quenching for
Solution of Some Parabolic Equations,} in {\it Annali di Matematica pura ed applieata}, pp. 243--260.

\bibitem{L3} Levine, H. A. (1992)
{Quenching and beyond: a survey of recent results}, in {\it Nonlinear mathematical problems in industry, II (Iwaki, 1992)}, 501--512, GAKUTO Internat. Ser. Math. Sci. Appl., 2, Gakkotosho, Tokyo; MR1370487.

\bibitem{FPQR} Ferreira, R.; de Pablo, A.; Quirós, F. and Rossi, J. D. (2006).
{Non-simultaneous quenching in a system of heat equations coupled at the boundary.}, {\it Z. Angew. Math. Phys. 57}, no.~4, 586--594; MR2244806. DOI: \url{https://doi.org/10.1007/s00033-005-0003-z}.

\bibitem{JZZ} Ji, R.; Zhou, S. and Zheng, S. (2013).
{Quenching behavior of solutions in coupled heat equations with
singular multi-nonlinearities}, {\it Appl. Math. Comput. 223}, 401--410; MR3116273. DOI: \url{https://doi.org/10.1016/j.amc.2013.07.098}.

\bibitem{PQR} de Pablo, A.;  Quiros, F. and  Rossi, J. D. (2002).
{Non-simultaneous quenching}. {\it Appl. Math. Letters 15}, 265--269.

\bibitem{ZW} Zheng, S. and Wang, W. (2008).
{Non-simultaneous versus simultaneous quenching in a coupled nonlinear parabolic system}, {\it Nonlinear Anal. 69}, no.~7, 2274--2285; MR2437625. DOI: \url{https://doi.org/10.1016/j.na.2007.08.007}.

\bibitem{Fe1} Ferreira, R. (2011).
{Quenching Phenomena for a Non-Local Diffusion Equation with a Singular Absorption.}
{\it Israel Journal of Mathematics 184}, 387--402, 2011; MR2823982. DOI: \url{https://doi.org/10.1007/s11856-011-0072-y}

\bibitem{ZMZ} Zhou, S.; Mu C. and Zeng R. (2011).
{Quenching for a non-local diffusion equation with a singular absorption term and Neumann boundary condition}, {\it Z. Angew. Math. Phys. 62}, no.~3, 483--493; MR2803482.  DOI: \url{https://doi.org/10.1007/s00033-010-0103-2}.

\bibitem{Fe2} Ferreira, R. (2019).
{Blow-up for a semilinear non-local diffusion system}, {\it Nonlinear Anal. 189}, 111564, 12 pp.; MR3981629. DOI: \url{https://doi.org/10.1016/j.na.2019.06.023}.

\bibitem{S} Smith, H.L. (1995).
{Monotone Dynamical Systems. An Introduction to the Theory of Competitive and Cooperative Systems}, {\it Mathematical Surveys and Monographs, 41}, Amer. Math. Soc., Providence, RI; MR1319817. DOI: \url{https://doi.org/10.1090/surv/041/01}.

\bibitem{SG} Sastre-G\'omez, S. (2014).
{Nonlocal diffusion problems.} [Doctoral thesis, Universidad Complutense de Madrid, Madrid, Spain]. Docta Universidad Complutense Repository. \url{https://hdl.handle.net/20.500.14352/38555}.

\bibitem{CR} Crandall, M. G. and Rabinowitz, P. H. (1971).
{Bifurcation from simple eigenvalues.} {\it J. Functional Analysis 8}, 321--340; MR0288640. DOI: \url{https://doi.org/10.1016/0022-1236(71)90015-2}.

\end{thebibliography}
\end{document}